\numberwithin{equation}{section}
\newcommand*\circled[1]{\tikz[baseline=(char.base)]
{\node[shape=circle,draw,inner sep=1] (char) {#1};}}
\newtheorem{theorem}{Theorem}[section]
\newtheorem{thm}{Theorem}[section]
\newtheorem{lem}[theorem]{Lemma}
\newtheorem{prop}[theorem]{Proposition}
\newtheorem{proposition}[theorem]{Proposition}
\newtheorem{defin}[theorem]{Definition}
\newtheorem{corollary}[theorem]{Corollary}
\theoremstyle{definition}
\newtheorem{definition}[theorem]{Definition}
\newtheorem{rem}[theorem]{Remark}
\newtheorem{remark}[theorem]{Remark}
\newcommand{\spa}{\operatorname{span}}
\newcommand{\ftp}{\, \circled{\rm{{\tiny F}}}\, }
\newtheorem{example}[theorem]{Example}
\def\ca{{\mathcal A}}
\def\cb{{\mathcal B}}
\def\cd{{\mathcal D}}
\def\ch{{\mathcal H}}
\def\cam{{\mathcal M}}
\def\cs{{\mathcal S}}
\def\ga{{\mathfrak A}}
\def\gb{{\mathfrak B}}
\def\gam{{\mathfrak M}}
\def\gn{{\mathfrak N}}\def\gpn{{\mathfrak n}}
\def\bc{{\mathbb C}}
\def\bm{{\mathbb M}}
\def\bn{{\mathbb N}}
\def\bz{{\mathbb Z}}
\def\a{\alpha}
\def\b{\beta}
\def\g{\gamma}  \def\G{\Gamma}
\def\d{\delta}  
\def\eeps{\epsilon}
\def\eps{\varepsilon}
\def\l{\lambda} 
\def\k{\kappa}
\def\m{\mu}
\def\n{\nu}
\def\r{\rho}
\def\f{\varphi}  \def\F{\Phi}
\def\th{\theta} 
\def\om{\omega}
\def\id{\hbox{id}}
\def\aut{\mathop{\rm aut}}
\def\carf{\mathop{\rm CAR}}
\def\tr{\mathop{\rm Tr}}
\newcommand{\ty}[1]{\mathop{\rm {#1}}}
\def\id{{\rm id}}
\def\ad{\mathop{\rm ad}}
\def\idd{{1}\!\!{\rm I}}
\newcommand{\nn}{\nonumber}
\DeclareMathAlphabet{\mathpzc}{OT1}{pzc}{m}{it}
\begin{document}
\date{\today}
\title[fermi systems, detailed balance]{$C^*$-fermi systems and detailed balance}
\author[Crismale, Duvenhage, Fidaleo]
       {Vitonofrio Crismale, Rocco Duvenhage, and Francesco Fidaleo}
\address{Dipartimento di Matematica, Universit\`{a} degli studi di Bari, Bari, Italy}
\email{{\tt vitonofrio.crismale@uniba.it}}
\address{Department of Physics, University of Pretoria, Pretoria, South Africa}
\email{{\tt rocco.duvenhage@up.ac.za}}
\address{Dipartimento di Matematica, Universit\`{a} degli studi di Roma Tor Vergata,
Rome, Italy}
\email{{\tt fidaleo@mat.uniroma2.it}}

\begin{abstract}
A systematic theory of product and diagonal states is developed for tensor products of $\bz_2$-graded $*$-algebras, as well as $\bz_2$-graded $C^*$-algebras. As a preliminary step to achieve this goal, we provide the construction of a {\it fermionic $C^*$-tensor product }of $\bz_2$-graded $C^*$-algebras.
Twisted duals of positive linear maps between von Neumann algebras are then
studied, and applied to solve a positivity problem on the infinite Fermi lattice. Lastly, these results are used to define fermionic detailed balance (which includes the definition for the usual tensor product as a particular case) in
general $C^*$-systems with gradation of type $\bz_2$, by viewing such a system
as part of a compound system and making use of a diagonal state.

\vskip0.1cm\noindent \\
{\bf Mathematics Subject Classification}: 46L06, 46L55, 81R15, 82B10.\\
{\bf Key words}: $\bz_2$-graded $C^*$-algebras, $\bz_2$-graded tensor products, product states, diagonal states, twisted duals, detailed balance.
\end{abstract}

\maketitle

\tableofcontents

\section{Introduction}

Detailed balance is a central topic in Statistical Mechanics. In the quantum setting, it has been extensively studied and developed, see \emph{e.g.} \cite{Ag, Al, CW, KFGV, M}.
However, to our best knowledge, no formulations specific to graded algebras, which include fermionic systems, are present in literature.

The goal of this paper is to develop a general theory of product systems (or compound systems)
endowed with a gradation of type $\bz_2$, and to use this theory as an abstract framework
for formulating detailed balance aimed specifically at fermionic systems.

A central notion in this respect is a tensor product for two $\bz_2$-graded $*$-algebras,
which we refer to as the Fermi tensor product. We consider this product in a very general algebraic point of view,
and successively study its completion in a natural norm, which is introduced here, corresponding to the maximal $C^*$-cross norm for the usual tensor product.

A crucial issue in product systems is the product of states. In the special case of Canonical
Anticommutation Relations (CAR for short) algebras, which reduce to the $C^*$-completion of the infinite
tensor product of $2\times 2$ matrices (see Section \ref{AfdZ2}), the product state was introduced
and studied in \cite{AM1,AM2}, where the reader is referred also for some relevant applications.
These states have a central role in the study of distributional symmetries. Indeed, in \cite{CF0,CF1}
it has been stated that they are exactly the ergodic normalised positive functionals invariant under
the action of the infinite symmetric group on the Fermi $C^*$-algebra. The theory of product states for
the Fermi tensor product, which includes the CAR algebra as a special case, is therefore developed here along with their GNS representation.

Equally important is the diagonal state, which is a key construction in the theory
of joinings and related ideas in ergodic theory. Diagonal states for the usual algebraic
tensor product of von Neumann algebras have been studied and used with much success in
noncommutative ergodic  theory, see in particular \cite{BCM, BCM2, D, D2, D3, Fid,F27a}.
They are also important in quantum detailed balance, where they provide an abstract
version of maximally entangled states, see in particular \cite{DS1, DS2}.
It is therefore to be expected that a fermionic version of diagonal states,
formulated in terms of the Fermi product, is crucial in the context of detailed balance for fermionic systems.

In addition, a version of quantum detailed balance, tailored to systems consisting
of indistinguishable fermions, was recently introduced and studied in \cite{Dfer} for
the special case of a finite lattice. On the one hand, this confirmed the
relevance of fermionic entangled states in the present context. On the other
hand, it equally serves as a further and direct motivation for an abstract version of diagonal states for the Fermi tensor product, which is developed in this paper.

Moreover, in \cite{Dfer}, in the case
of the finite dimensional observable algebra associated to a finite fermion
lattice, initial steps were taken
to develop a duality theory for dynamical maps. This led to a characterization of fermionic standard
quantum detailed balance in terms of the fermionic dual of the dynamics.

One remaining problem there was to determine if the fermionic dual of a dynamical map has
positivity properties corresponding to that of the original map. It is one of the
objectives of this paper to solve this problem, even in a more general form,
by developing a duality theory in the von Neumann algebraic framework. Indeed, it will be seen that
the dual map has the same positivity properties (positivity,
$n$-positivity, or complete positivity) as the dynamical map itself, if the
latter is even.

The final part of this work is devoted to
formulate and discuss fermionic standard quantum detailed balance in the
general $C^*$-algebra setup, focussing on conceptual aspects of the
mathematical formulation. In particular, the Fermi tensor product clarifies
the analogy with standard quantum detailed balance expressed in terms of the
usual tensor product.

As quantum detailed balance is the origin of this paper, it is worth
making some brief general remarks about this topic.
The fermionic quantum detailed
balance condition we are interested in, that is fermio\-nic standard quantum
detailed balance, is most directly motivated by standard quantum detailed
balance with respect to a reversing operation. The latter condition has been studied
in \cite{FU, FR, BQ2, DS1}. Closely related work has appeared in
\cite{BQ, DF, MS, R}. All of these in turn are built on the early works on quantum
detailed balance. We mention \cite{Ag}, where to our knowledge this notion was first introduced for the case of open quantum Markov systems. Agarwal's approach was later developed and extended in \cite{CW,M}, whereas in \cite{Al,KFGV} the authors introduced perhaps the best known definition of quantum detailed balance for quantum dynamical semigroups. We mention also the case in which the notion is strictly connected to symmetry properties related to the KMS condition \cite{GL}.

As previously stressed, none of these
references attempted to set up detailed balance specifically for systems of
indistinguishable fermions. On the other hand, Markov chains and related
matters ({\it e.g.} \cite{AFM, Fid2}) and also  the extension of the abstract theory of disordered systems ({\it cf.} \cite{BF})
have been treated in the context of fermions,
but without reference to detailed balance designed
specifically for indistinguishable particles.

To set up the abstract algebraic framework, we work in terms of $\mathbb{Z}_{2}$-gradings of $*$-algebras, and exploit the resulting
Klein transformations. This includes the notion of twisted commutant in
the case of von Neumann algebras, a concept developed in \cite{DHR1}, and subsequently used in \cite{DHR2}. Our approach allows us
to move from the concrete realm of the above cited paper \cite{Dfer} to an abstract framework for fermionic systems, where one can
define a general version of the fermionic dual, that is the twisted
dual, of a positive linear map.

In \cite{DHR1}, twisted duality for an algebra of fields was studied in terms
of the twisted commutant. This is related to locality, or more precisely Haag
duality (see \cite{HS} and \cite {H}, Section III.4.2) in the so-called {\it algebraic quantum
field theory}, when Fermi fields are involved. In this paper, we define and study the twisted dual of a dynamical map, and more generally of
a positive linear map from one von Neumann algebra to another.

We mention that twisted duality in the sense of \cite{DHR1} was shown for the CAR algebra case in
\cite{F}. The reader is referred to \cite{W}, Section 13 for a result of this type, and
\cite{S} for further related work.

However, a particularly useful approach to
this problem was followed in \cite{BJL}, the main result of which will be
applied in Section \ref{AfdTralie} in order to connect our general
results to the concrete case of the
CAR algebra, and contribute to solve the aforementioned problem.

The plan of the paper is as follows. Apart from the preliminary Section  \ref{AfdPre}, in Section \ref{gienes} we collect some general results on the (analogue of the) Gelfand-Naimark-Segal (GNS for short) representation arising from a positive functional on a merely ({\it i.e.} without any topology)
involutive algebra, which is useful in the sequel.

In Section \ref{AfdZ2}, we recall the main facts concerning $G$-graded involutive algebras, focussing ourselves on the case of our interest $G=\bz_2$.

One of the main ingredients to treat Fermi systems, is the Jordan-Klein-Wigner transformation (simply mentioned as the Klein transformation). After recalling its original definition, in Section \ref{kjw} we investigate it in some detail from an abstract and concrete point of view. Related concepts, in particular the twisted
commutant of a von Neumann algebra, are also discussed.

In Section \ref{tensor}, we recall the construction of the $\bz_2$-graded (Fermi)
tensor product of $\bz_2$-graded $*$-algebras, whereas in Section \ref{AfdProdToest} we study the product functional of two states on the algebraic Fermi tensor product and show whether it is positive. Indeed, given two states,
each of them on the $\bz_2$-graded $*$-algebra, we  define their product functional
on the above tensor product, and prove that it is positive if at least one of the given states
is even. This is a generalization of Theorem 1 in \cite{AM1}.

Furthermore, in Section \ref{fermicstar} the maximal, possibly extended-valued,
seminorm on the algebraic Fermi tensor product is introduced, and it is proved that it is indeed a $C^*$-norm.
This gives rise to the Fermi $C^*$-product, analogous to the completion of the usual tensor product w.r.t. the maximal $C^*$-cross norm.

Even though it is not used in the rest of the paper, in Section \ref{AfdProdGNS} we also provide the description of the GNS representation for any product state, which is indeed more involved than the usual one, and is achieved via the Stinespring dilation.

Section \ref{AfdDiag} is mainly devoted to introduce the diagonal state for the Fermi tensor product,
and its GNS representation is investigated as well.

Duality is then studied
in Section \ref{AfdDualiteit}, where we obtain an abstract result
required to solve our positivity problem from
\cite{Dfer} mentioned earlier.

As already pointed out, a duality theory is developed for positive
linear maps between von Neumann algebras, by defining and investigating the
twisted duals of such maps in terms of the twisted commutants of the von
Neumann algebras, with particular emphasis on the positivity properties of the
twisted dual maps. The positivity problem for an infinite Fermi lattice is then solved in Section \ref{AfdTralie}.

The paper concludes with a proposal for fermionic detailed
balance in the abstract framework in Sections \ref{AfdFfb} and \ref{AfdCffb}.
Much of the foregoing theory is applied there in order to motivate the proposal both conceptually and technically, along with illustration by some examples.

\section{Preliminaries}
\label{AfdPre}

Let $\ga$ be an involutive, or equivalently, a $*$-algebra. By $\aut(\ga)$ we denote the group of its $*$-automorphisms.

For two linear spaces $X$ and $Y$, we denote by $X\dot{+}Y$ and $X\odot Y$ their algebraic direct sum and tensor product, respectively.
If in addition, $\ga$ and $\gb$ are involutive algebras with $*$ denoting their involution, then $\ga\otimes\gb$ will denote the algebraic tensor product $\ga\odot\gb$ equipped with the usual product and involution given on the generators by
$$
(a_1\otimes b_1){\bf\cdot}(a_2\otimes b_2)=a_1a_2\otimes b_1b_2\,,\quad (a_1\otimes b_1)^\dagger=a_1^*\otimes b_1^*\,,
$$
for all $a_1\,a_2\in\ga, b_1,b_2\in\gb\,.$\footnote{We introduce the symbols ${\bf\cdot}$ and ${}^\dagger$ to denote the product and the involution in
$\ga\otimes\gb$ in order to distinguish them from the analogous operations (denoted by the standard symbology)
in the $\bz_2$-graded tensor product $\ga\, \circled{\text{{\tiny F}}}\, \gb$ whenever $\ga$ and $\gb$ are equipped with a $\bz_2$-grading.}

If $\{\ga_\iota\}_{\iota\in I}$ is a collection of $C^*$-algebras indexed by a set $I$, $\oplus_{\iota\in I}\ga_\iota$
denotes their $C^*$-direct sum as defined in Section L2 of \cite{WO}. It is nothing but the
$C^*$-completion of $\dot{+}_{\iota\in I}\ga_\iota$. If $|I|<+\infty$,
then $\dot{+}_{\iota\in I}\ga_\iota=\oplus_{\iota\in I}\ga_\iota$ at the level of involutive algebras, where $|\cdot|$ denotes the cardinality.

For $C^*$-algebras $\ga$ and $\gb$, we denote by $\ga\otimes_{\rm max}\gb$
and $\ga\otimes_{\rm min}\gb$ the completion of $\ga\otimes\gb$ w.r.t. the maximal and minimal $C^*$-cross norm, respectively, see {\it e.g.} \cite{T}.

For states $\om\in\cs(\ga)$, $\f\in\cs(\gb)$, we denote by
\[
\psi_{\om,\f}\in\cs(\ga\otimes_{\rm min}\gb)
\]
the product state on the $C^*$-algebra $\ga\otimes_{\rm min}\gb$.
A fortiori, $\psi_{\om,\f}$ is also well defined as a state on $\ga\otimes_{\rm max}\gb$. Therefore, with an abuse of notation we write $\psi_{\om,\f}\in\cs(\ga\otimes_{\rm max}\gb)$, or merely
$\psi_{\om,\f}\in\cs(\ga\otimes\gb)$.

Let $\ga$ be a $C^*$-algebra, and $\f\in\cs(\ga)$ a state.
By $\big(\ch_\f,\pi_\f,\xi_\f\big)$, we denote the Gelfand-Naimark-Segal (GNS for short) representation associated to the state $\f$, see {\it e.g.} \cite{T}. If in addition $\th\in\aut(\ga)$ is a $*$-automorphism leaving invariant the state $\f$, then there exists a unitary $V_{\f,\th}$ acting on $\ch_\f$ which implements $\th$, that is
$$
V_{\f,\th}\pi_\f(a)V_{\f,\th}^*=\pi_\f(\th(a))\,,\quad a\in\ga\,.
$$
The quadruple $\big(\ch_\f,\pi_\f, V_{\f,\th},\xi_\f\big)$ is called {\it the covariant GNS representation} associated to the triple $(\ga,\th,\f)$.

If $\ga$ is a $C^*$-algebra with a $C^*$-subalgebra $\gb$,
the linear mapping $E:\ga\to \gb$ is called a projection if $E(b)=b$ for all $b\in\gb$,
and is said to be a $\gb$-bimodule map if $E(ab)=E(a)b$, and $E(ba)=bE(a)$ for all $a\in\ga, b\in\gb$.
A positive $\gb$-bimodule projection $E$ is called a \emph{conditional expectation}.

If $\Phi: \ga\mapsto \gb$ is a linear map between the $C^*$-algebras $\ga$ and $\gb$, it is said to be {\it completely positive} if all maps
\begin{equation*}
\Phi\otimes\id_{\bm_n(\bc)}:\bm_n(\ga)\to\bm_n(\gb)\,,\quad n=1,2,\dots
\end{equation*}
are positive.

\section{The GNS representation for algebraic probability spaces}
\label{gienes}

We recall and extend some properties concerning the Gelfand-Naimark-Segal representation associated to a so-called algebraic probability space.

An {\it algebraic probability space} is a pair $(\ga,\f)$ where $\ga$ is an involutive algebra,
and $\f$ is a positive linear functional.\footnote{A positive linear functional $\f$ on the involutive algebra $\ga$
is an element of the algebraic dual of $\ga$ assuming positive values on positive elements:
$$
\f(a^*a)\geq0\,,\quad a\in\ga\,.
$$}

We report without proof (see {\it e.g.} \cite{T}) the following well-known result.
\begin{prop}
\label{cbs}
Let $\f$ be a positive functional on the involutive algebra $\ga$. Then the sesquilinear form $(x,y)\in\ga\times \ga\mapsto\f(x^*y)\in\bc$
\begin{itemize}
\item[(i)] is hermitian: $\f(y^*x)=\overline{\f(x^*y)}$,
\item[(ii)] and satisfies the Cauchy-Bunyakovsky-Schwarz inequality:
$$
|\f(y^*x)|^2\leq\f(x^*x)\f(y^*y)\,.
$$
\end{itemize}
\end{prop}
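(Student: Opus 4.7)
The plan is to derive both (i) and (ii) from the single assumption of positivity applied to the element $x + \lambda y$ with varying $\lambda \in \bc$.

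For (i), I would first observe that for every $\lambda\in\bc$ the expansion
\[
\f\bigl((x+\lambda y)^*(x+\lambda y)\bigr)
=\f(x^*x)+\lambda\f(x^*y)+\bar\lambda\f(y^*x)+|\lambda|^2\f(y^*y)
\]
must be a nonnegative, hence real, number. Since $\f(x^*x)$ and $\f(y^*y)$ are real (they are nonnegative by hypothesis), the combination $\lambda\f(x^*y)+\bar\lambda\f(y^*x)$ is real for every $\lambda$. Specialising to $\lambda=1$ shows that $\f(x^*y)+\f(y^*x)\in\br$, while $\lambda=i$ shows $\f(x^*y)-\f(y^*x)\in i\br$. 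Writing $\f(x^*y)=a+ib$ and $\f(y^*x)=c+id$ and comparing real and imaginary parts in these two conditions gives $c=a$ and $d=-b$, i.e.\ $\f(y^*x)=\overline{\f(x^*y)}$.

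For (ii), having (i) in hand, the same expansion becomes
\[
0\leq\f(x^*x)+2\,\mathrm{Re}\bigl(\lambda\f(x^*y)\bigr)+|\lambda|^2\f(y^*y),\qquad \lambda\in\bc.
\]
If $\f(y^*y)>0$, I would optimise over $\lambda$ by choosing $\lambda=-\overline{\f(x^*y)}/\f(y^*y)$; substituting and rearranging yields $|\f(x^*y)|^2\leq\f(x^*x)\f(y^*y)$, which is the claim. The residual case $\f(y^*y)=0$ is where the argument is most delicate and where I expect the only real subtlety to lie: there one cannot divide by $\f(y^*y)$. Instead, I would parametrise $\lambda=-t\,\overline{\f(x^*y)}/|\f(x^*y)|$ with $t\geq 0$ (assuming for a contradiction that $\f(x^*y)\neq 0$), whence the inequality reduces to $\f(x^*x)\geq 2t|\f(x^*y)|$ for every $t\geq 0$; letting $t\to+\infty$ forces $\f(x^*y)=0$, so that $|\f(x^*y)|^2=0=\f(x^*x)\f(y^*y)$ and the Cauchy--Bunyakovsky--Schwarz inequality holds trivially.

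The whole argument rests only on positivity of $\f$ on squares $a^*a$; in particular, no unit, no norm, and no topology on $\ga$ is needed, which is appropriate given that $\ga$ is merely an involutive algebra. The only place where some care is required is the degenerate case $\f(y^*y)=0$, handled by the linear-in-$t$ argument above rather than by the standard quadratic optimisation.
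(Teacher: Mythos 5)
Your proof is correct. The paper in fact states this proposition without proof, citing it as well known (with a reference to Takesaki), so there is no in-text argument to compare against; your polarisation argument via $\f\bigl((x+\lambda y)^*(x+\lambda y)\bigr)\geq 0$ is exactly the standard one the citation points to, and your separate treatment of the degenerate case $\f(y^*y)=0$ by a linear-in-$t$ growth argument is precisely the care needed here, since $\ga$ carries no unit, norm, or topology that would let one dispose of that case by other means.
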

By Proposition \ref{cbs}, we first see that $\ga$ is equipped with the semi-inner product $(x,y)\mapsto\f(y^*x)$, with seminorm $\|x\|:=\f(x^*x)^{1/2}$.
Let
$$
\gpn_\f:=\{x\in\ga\mid\f(x^*x)=0\}
$$
be the left ideal associated to $\f$, and denote by $\ch_\f$ the completion of the quotient space $\ga/\gpn_\f$ w.r.t. the seminorm $\f(x^*x)^{1/2}$. As usual,
$a\in\ga\mapsto a_\f\in\ch_\f$ denotes the canonical quotient map.

In addition, it is matter of routine to check that, for each $a,x\in\ga$,
$$
\pi^o_\f(a)x_\f:=(ax)_\f
$$
uniquely defines linear operators on the common dense domain
$$
\cd_{\pi^o_\f(a)}=\{x_\f\mid x\in\ga\}=:\cd_\f\,.
$$
The main properties of $\pi^o_\f$ are summarised in the following:
\begin{thm}
	\label{gnsrcfp}
	Let $\f$ be a positive linear functional on the involutive algebra
	$\ga$. With the above notations, the following hold true.
	\begin{itemize}
		\item[(i)] On $\cd_\f$, we have for $a,b\in\ga$ and $\a,\b\in\bc$,
		$\pi^o_\f(\a a+\b b)=\a\pi^o_\f(a)+\b\pi^o_\f(b)$,
		$\pi^o_\f(ab)=\pi^o_\f(a)\pi^o_\f(b)$.
		\item[(ii)] For $x\in\ga$, $\pi^o_\f(x)^*\supset\pi^o_\f(x^*)$ and
		therefore the linear operators $\big\{\pi^o_\f(x)\mid x\in\ga\big\}$ are
		closable on the common core $\cd_\f$.
		\item[(iii)]
		$\overline{\pi^o_\f(x^*)}\subset\left(\overline{\pi^o_\f(x)}\right)^*$
		and therefore $\overline{\pi^o_\f(x)}$ is hermitian, provided $x=x^*$.
		\item[(iv)] For $\pi_\f(x):=\pi^o_\f(x^*)^*$ we have
		$\pi_\f(x)^*\subset\pi_\f(x^*)$, and therefore $\pi_\f(x)^*$ is
		Hermitian, provided $x=x^*$.
		\item[(v)] For $x\in\ga$, if for some constant $A_x$ we have
		$\f(a^*x^*xa)\leq A_x\f(a^*a)$ for all $a\in\ga$, then
		$\cd_{\overline{\pi^o_\f(x)}}=\ch_\f$ and therefore
		$\overline{\pi^o_\f(x)}$ is bounded.
		\item[(vi)] If for the uniform constant $C$ we have $|\f(x)|\leq
		C\f(x^*x)^{1/2}$ for all $x\in\ga$, then there exists
		$\xi_\f\in\bigcap_{x\in\ga}\cd_{\pi^o_\f(x^*)^*}$ which is cyclic, i.e.
		$\overline{\pi^o_\f(\ga)\xi_\f}=\ch_\f$, and such that
		$$
		\f(x)=\big\langle\pi^o_\f(x^*)^*\xi_\f,\xi_\f\big\rangle\,,\quad
		x\in\ga\,.
		$$
	\end{itemize}
\end{thm}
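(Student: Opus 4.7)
The plan is to handle the six items in order, using two workhorses throughout: the CBS inequality of Proposition \ref{cbs}, and standard manipulations of adjoints and closures of densely defined operators on $\ch_\f$. For (i), the only substantive point is that $\pi^o_\f(a)$ descends to the quotient $\ga/\gpn_\f$. If $\f(z^*z)=0$, applying CBS to the pair $(z,\,a^*a z)$ gives
\begin{equation*}
|\f(z^*a^*a z)|^2=|\f((a^*az)^* z)|^2\le\f(z^*z)\,\f(z^*(a^*a)^2z)=0,
\end{equation*}
so $az\in\gpn_\f$, and hence $\pi^o_\f(a)x_\f:=(ax)_\f$ is unambiguous; linearity in $a$ and multiplicativity on $\cd_\f$ are then immediate from the definition.

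For (ii), the key computation
\begin{equation*}
\langle\pi^o_\f(x)u_\f,v_\f\rangle=\f(v^*xu)=\f\bigl((x^*v)^*u\bigr)=\langle u_\f,\pi^o_\f(x^*)v_\f\rangle
\end{equation*}
yields $\pi^o_\f(x^*)\subset\pi^o_\f(x)^*$; since the right-hand side is then densely defined, $\pi^o_\f(x)$ is closable with common core $\cd_\f$. Parts (iii) and (iv) are formal consequences: for (iii), taking closures in the inclusion from (ii) and using $\pi^o_\f(x)^*=\overline{\pi^o_\f(x)}^{\,*}$ for closable operators gives $\overline{\pi^o_\f(x^*)}\subset\overline{\pi^o_\f(x)}^{\,*}$, which for $x=x^*$ is the symmetry statement. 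For (iv), closability also yields $\pi_\f(x)^*=\pi^o_\f(x^*)^{**}=\overline{\pi^o_\f(x^*)}\subset\pi^o_\f(x)^*=\pi_\f(x^*)$; specializing $x=x^*$ gives $\pi_\f(x)^*\subset\pi_\f(x)$, and since $\pi_\f(x)$ is closed one concludes $\pi_\f(x)^*\subset\pi_\f(x)^{**}=(\pi_\f(x)^*)^*$, i.e.\ $\pi_\f(x)^*$ is Hermitian.

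Part (v) is immediate: $\|\pi^o_\f(x)a_\f\|^2=\f(a^*x^*xa)\le A_x\|a_\f\|^2$ on the dense domain $\cd_\f$, so $\overline{\pi^o_\f(x)}$ extends to a bounded operator on all of $\ch_\f$. For (vi), the hypothesis $|\f(x)|\le C\|x_\f\|$ first shows that $\f$ vanishes on $\gpn_\f$ and so descends to a continuous linear form on $\cd_\f$, which extends by continuity to $\ch_\f$. Riesz representation produces a unique $\xi_\f\in\ch_\f$ with $\f(x)=\langle x_\f,\xi_\f\rangle$ for every $x\in\ga$. The inclusion $\xi_\f\in\cd_{\pi_\f(x)}$ follows from
\begin{equation*}
\langle\pi^o_\f(x^*)u_\f,\xi_\f\rangle=\langle(x^*u)_\f,\xi_\f\rangle=\f(x^*u)=\langle u_\f,x_\f\rangle,
\end{equation*}
which simultaneously identifies $\pi_\f(x)\xi_\f=x_\f$. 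Hence $\f(x)=\langle x_\f,\xi_\f\rangle=\langle\pi^o_\f(x^*)^*\xi_\f,\xi_\f\rangle$, and cyclicity is immediate because $\pi_\f(\ga)\xi_\f=\cd_\f$ is dense in $\ch_\f$.

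The main obstacle, and the reason (vi) carries the conceptual weight of the theorem, is that without a unit in $\ga$ one cannot short-circuit the argument by setting $\xi_\f=1_\f$: the scalar bound in the hypothesis is precisely what is needed to manufacture the GNS vector via Riesz and then recover the action of $\pi_\f$ on it. The remaining work is pure domain bookkeeping, but one must remain vigilant because $\ga$ carries no topology and the operators $\pi^o_\f(x)$ are a priori unbounded.
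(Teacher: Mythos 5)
Your proof is correct and follows essentially the same route as the paper's: the adjoint computation $\langle\pi^o_\f(x)u_\f,v_\f\rangle=\langle u_\f,\pi^o_\f(x^*)v_\f\rangle$ drives (ii)--(iv), boundedness on the dense domain gives (v), and Riesz representation plus the identity $\pi^o_\f(x^*)^*\xi_\f=x_\f$ gives (vi). The only difference is that you spell out, via Cauchy--Bunyakovsky--Schwarz, why $\gpn_\f$ is a left ideal so that $\pi^o_\f(a)$ descends to the quotient --- a point the paper dismisses as routine --- which is a welcome addition rather than a deviation.
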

\begin{proof}
	(i) is trivial. For (ii), we have
	\begin{align*}
	\langle\pi^o_\f(x)a_\f,b_\f\rangle=&\langle(xa)_\f,b_\f\rangle=\f(b^*xa)\\
	=&\f((x^*b)^*a)=\langle a_\f,(x^*b)_\f\rangle\\
	=&\langle a_\f,\pi^o_\f(x^*)b_\f\rangle\,,
	\end{align*}
	and therefore $\pi^o_\f(x)^*$ extends $\pi^o_\f(x^*)$. Since
	$\pi^o_\f(x)$ admits the closed extension $\pi^o_\f(x^*)^*$, it is
	closable.
	
	Concerning (iii),
	$$
	\left(\overline{\pi^o_\f(x^*)}\right)^*=\pi^o_\f(x^*)^*\supset\overline{\pi^o_\f(x)}\,.
	$$
	
	Concerning (iv), by (ii), we get
	\begin{align*}
	\pi_\f(x^*)=\pi^o_\f(x)^*\supset\overline{\pi^o_\f(x^*)}=\pi^o_\f(x^*)^{**}=\big(\pi^o_\f(x^*)^*\big)^*=\pi_\f(x)^*\,.
	\end{align*}

	If (v) is satisfied for some $x\in\ga$, then $\pi^o_\f(x)$ is bounded on
	the dense domain $\cd_\f$. Therefore, it uniquely extends to a bounded
	operator on the whole $\ch_\f$, which coincides with its closure by the
	closed graph theorem.
	
	Suppose that (vi) holds true. Then $x_\f\in\ch_\f\mapsto\f(x)$ uniquely
	extends to a bounded linear functional on $\ch_\f$ and therefore, by the
	Riesz theorem, $\f(x)=\langle x_\f,\xi_\f\rangle$ for a uniquely
	determined vector $\xi_\f\in\ch_\f$. In addition,
	$$
	\langle
	\pi^o_\f(x^*)a_\f,\xi_\f\rangle=\langle(x^*a)_\f,\xi_\f\rangle=\f(x^*a)=\langle
	a_\f,x_\f\rangle\,,
	$$
	and so $\xi_\f\in\cd_{\pi^o_\f(x^*)^*}$ with
	$\pi^o_\f(x^*)^*\xi_\f=x_\f$. By (ii) this gives that $\xi_\f$ is
	cyclic, and we get
	$$
	\f(x)=\langle
	x_\f,\xi_\f\rangle=\langle\pi^o_\f(x^*)^*\xi_\f,\xi_\f\rangle\,.
	$$
\end{proof}

In the general setting previously described, by (vi) of Theorem \eqref{gnsrcfp},
the candidate for the GNS representation associated
to the algebraic probability space $(\ga,\f)$ can be viewed as the collection of
the closed operators $\{\pi^o_\f(x^*)^*\mid x\in\ga\}$, acting on the Hilbert space $\ch_\f$.

We note that, without assuming further conditions, such operators of the GNS representation
associated to an algebraic probability space do not enjoy the good well-known properties of
the usual situation arising in the $C^*$-algebraic setting. However, for the purpose of the present paper, we only deal with cases
for which (v) and (vi) in the previous theorem are satisfied
for each $a\in\ga$.\footnote{The properties (v) and (vi) of Theorem \ref{gnsrcfp} hold true
for each $a\in\ga$, provided $\ga$ is an involutive Banach algebra equipped with a bounded approximate unity,
and therefore when $\ga$ is a $C^*$-algebra, see {\it e.g.} \cite{T}}.
In this situation, the GNS representation is commonly denoted by the triple
$(\ch_\f,\pi_\f,\xi_\f)$, uniquely determined up to unitary equivalence, see {\it e.g.} \cite{T}, Section I.9.

\section{$\bz_2$-graded $*$-algebras}
\label{AfdZ2}

For $\bz_2=\{1,-1\}$ with the product as the group operation, the involutive algebra $\ga$ is an {\it involutive $\bz_2$-graded algebra} if
$$
\ga=\ga_1\dot{+}\ga_{-1}
$$
and
$$
(\ga_i)^*=(\ga^*)_i\,,\,\,
\ga_i\ga_j\subset\ga_{ij}\,,\quad i,j=1,-1\,.\footnote{Another standard terminology would be \it{involutive superalgebra}.}
$$

More generally, for a group $G$ an involutive $G$-graded algebra $\ga$ would be an algebraic direct sum
$$
\ga=\dot{+}_{g\in G}\ga_g\,,
$$
equipped with a product and an involution
satisfying
$$
\ga_g\ga_h\subset\ga_{gh}\,,\,\,\,
\ga_g^*\subset\ga_{g^{-1}}\,,\quad g,h\in G\,.
$$
The linear subspaces $\ga_g$, for $g\in G$, are called {\it  the homogeneous components} of $\ga$.
Elements of $\ga_g$, for any $g\in G$, are correspondingly called {\it homogeneous elements} of $\ga$.
For the unit $e\in G$, the homogeneous component $\ga_e$ is an involutive subalgebra of $\ga$ containing the identity $\idd_\ga$ if $\ga$ is unital.

Since we only deal with $\bz_2$-graded algebras, we do not pursue the situation associated to a general group $G$ any further.
In the case of $G=\bz_2$, for each homogeneous element $x\in\ga_{\pm 1}$ we correspondingly indicate its {\it grade} by
\[
\partial(x)=\pm1.
\]

Suppose now we have an involutive $\bz_2$-graded algebra $\ga=\ga_1\dot{+}\ga_{-1}$ as above, and define $\th:\ga\to\ga$ as
\begin{equation*}
\th\lceil_{\ga_1}=\id_{\ga_1}\,,\quad \th\lceil_{\ga_{-1}}=-\id_{\ga_{-1}}\,.
\end{equation*}
It is almost immediate to check that $\th\in\aut(\ga)$ is an involutive $*$-automorphism ({\it i.e.} $\th^2=\id_\ga$).
Conversely, let $\th\in\aut(\ga)$ such that $\th^2=\id_\ga$, and consider
\begin{align*}
\eps_1:=\frac{1}{2}(\id_{\ga}+\th)\,,&\quad \eps_{-1}:=\frac{1}{2}(\id_{\ga}-\th)\,,\\
\ga_1:=\eps_1(\ga)\,,&\quad\ga_{-1}:=\eps_{-1}(\ga)\,.
\end{align*}
\begin{lem}
With the above notations, the $*$-automorphism $\th$ induces a structure of involutive $\bz_2$-graded algebra on $\ga$.
\end{lem}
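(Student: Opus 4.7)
The plan is to verify, in turn, the three structural requirements of a $\bz_2$-grading: the algebraic direct sum decomposition $\ga=\ga_1\dot{+}\ga_{-1}$, the compatibility of the homogeneous components with the involution, and the multiplicative rule $\ga_i\ga_j\subset\ga_{ij}$. The whole argument rests on the single identity $\th^2=\id_\ga$ combined with the fact that $\th$ is a $*$-automorphism; no serious obstacle is anticipated.

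First, I would record the elementary algebra of the idempotents $\eps_{\pm 1}=\frac{1}{2}(\id_\ga\pm\th)$. Using $\th^2=\id_\ga$ one checks immediately that $\eps_1+\eps_{-1}=\id_\ga$, $\eps_{\pm 1}^{\,2}=\eps_{\pm 1}$ and $\eps_1\eps_{-1}=\eps_{-1}\eps_1=0$. From the first identity we obtain $\ga=\ga_1+\ga_{-1}$; from the last two, together with the equivalence
\[
x\in\ga_{\pm 1}\ \Longleftrightarrow\ \th(x)=\pm x,
\]
which is obtained by applying $\th$ to $x=\eps_{\pm 1}(y)$ and using $\th^2=\id_\ga$, we deduce directness: if $x\in\ga_1\cap\ga_{-1}$ then $x=\th(x)=-x$, forcing $x=0$.

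Next, to verify the involution rule $(\ga_{\pm 1})^*\subset\ga_{\pm 1}$, I would take $x\in\ga_i$ (so $\th(x)=ix$ with $i\in\{1,-1\}$) and use that $\th$ is a $*$-automorphism to compute
\[
\th(x^*)=\th(x)^*=(ix)^*=\bar i\,x^*=i\,x^*,
\]
so $x^*\in\ga_i$. For the multiplicative rule, given $x\in\ga_i$ and $y\in\ga_j$, the multiplicativity of $\th$ yields
\[
\th(xy)=\th(x)\th(y)=(ix)(jy)=(ij)\,xy,
\]
hence $xy\in\ga_{ij}$, as required.

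I expect no real obstacle: every step uses only the involutive hypothesis $\th^2=\id_\ga$ and the fact that $\th$ respects both the product and the involution. The only point worth being careful about is the direct sum statement, where one should rely on the eigenspace characterisation $\ga_{\pm 1}=\{x\in\ga\mid\th(x)=\pm x\}$ rather than trying to intersect the ranges of $\eps_1$ and $\eps_{-1}$ by hand; everything else is a one-line computation.
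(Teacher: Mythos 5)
Your proof is correct and follows essentially the same route as the paper: the paper's proof only spells out the directness of the sum $\ga_1\cap\ga_{-1}=\{0\}$ (via exactly your argument, that such an element satisfies $\th(a)=a=-\th(a)$, hence vanishes) and declares the remaining properties trivial. Your verification of the involution and multiplication rules via the eigenspace characterisation $\th(x)=\pm x$ is the standard filling-in of those "trivial" steps and is accurate throughout.
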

\begin{proof}
We have only to show $\ga_1\cap\ga_{-1}=\{0\}$, the others properties being trivial. Indeed, let us take $a,b,c\in\ga$ such that
$$
\frac{b+\th(b)}2=a=\frac{c-\th(c)}2\,.
$$
Then $\th(a)=a=-\th(a)$, which gives $a=0$.
\end{proof}
As a consequence, for an involutive algebra $\ga$ a $\bz_2$-grading
is always induced by an involutive $*$-automorphism, and therefore we can state the following
\begin{defin}
A $\bz_2$-graded involutive algebra is a pair $(\ga,\th)$, with $\ga$ an involutive algebra and $\th\in\aut(\ga)$ such that $\th^2=\id_\ga$.
\end{defin}
\vskip.3cm
We then also refer to $\theta$ as a $\bz_2$-grading of $\ga$, and denote
by $\ga_+:=\ga_1$ the {\it even part} (which is indeed a $*$-subalgebra
of $\ga$), and by $\ga_-:=\ga_{-1}$ the {\it odd part} of $\ga$, respectively.
As a consequence, for any $a\in\ga$, we can write $a=a_++a_-$, with $a_+\in\ga_+$, $a_-\in\ga_-$,
and this decomposition is unique. Moreover, one has $\th(a_+)=a_+$, $\th(a_-)=-a_-$.

Any involutive algebra is endowed with the trivial $\bz_2$-grading induced by $\th=\id_\ga$,
if no nontrivial $*$-automorphism is selected. In this case, $\ga_+=\ga$ and $\ga_-=\{0\}$.

In the sequel, we will not indicate the grading automorphism whenever the latter is fixed, without risk of confusion.

Let $\big(\ga^{(i)},\th^{(i)}\big)$, $i=1,2$, be a pair of $\bz_2$-graded involutive algebras,
together with a map $T:\ga^{(1)}\to\ga^{(2)}$. $T$ is said to be {\it even} if it is grading-equivariant:
$$
T\circ\th^{(1)}=\th^{(2)}\circ T\,.
$$
When $\th^{(2)}=\id_{\ga^{(2)}}$, the map $T:\ga^{(1)}\to\ga^{(2)}$ is even if and only if it is grading-invariant, that is
$T\circ\th^{(1)}=T$. As a particular case when $\big(\ga^{(1)},\th^{(1)}\big)=(\ga,\th)$
and $\big(\ga^{(2)},\id_{\ga^{(2)}}\big)=\big(\bc,\id_\bc\big)$, a functional $f:\ga\to\bc$
is even if and only if $f\circ\th=f$. If the map $T$ (or the functional $f$) is $\bz_2$-linear,
then it is even if and only if $T\lceil_{\ga^{(1)}_-}=0$ ($f\lceil_{\ga_-}=0$).

We now specialise the situation to the following example.
It comes directly from quantum field theory and statistical mechanics, and is based on the Canonical Anticommutation Relations algebra on a countable set of indices.

Recall that for an arbitrary set $L$, the {\it Canonical Anticommutation Relations} (CAR
for short) algebra over  $L$ is the $C^{*}$-algebra $\carf(L)$
with the identity $\idd$ generated by the set $\{a_j,
a^{\dagger}_j\mid j\in L\}$ ({\it i.e.} the Fermi annihilators and
creators respectively), and the relations
\begin{equation*}
(a_{j})^{*}=a^{\dagger}_{j}\,,\,\,\{a^{\dagger}_{j},a_{k}\}=\d_{jk}\idd\,,\,\,
\{a_{j},a_{k}\}=\{a^{\dagger}_{j},a^{\dagger}_{k}\}=0\,,\,\,j,k\in L\,.
\end{equation*}
A $\bz_{2}$-grading is induced on $\carf(L)$ by the $*$-automorphism $\th$ acting on the generators as
$$
\th(a_{j})=-a_{j}\,,\quad \th(a^{\dagger}_{j})=-a^{\dagger}_{j}\,,\quad
j\in L\,.
$$
Notice that, by definition,
\begin{equation}
\label{loc}
\carf(L)=\overline{\carf{}_o(L)}\,,
\end{equation}
where
\begin{equation}
\label{loc2}
\carf{}_o(L):=\bigcup\{\carf(I)\mid\, I\subset L,\text{finite}\,\}
\end{equation}
is the dense subalgebra of the {\it localised elements}.

For a countable index set $L\sim\bn$, $\carf(L)$ can be identified with the $C^{*}$-infinite tensor product of $L$-copies of $\bm_{2}(\bc)$, that is
\begin{equation}
\label{jkw}
\carf(\bn)\sim\overline{\bigotimes_{\bn}\bm_{2}(\bc)}^{C^*}\,.
\end{equation}
As shown in \cite{T}, Exercise XIV.1, the above isomorphism is achieved by a Jordan-Klein-Wigner
transformation. We briefly report such a construction for the
convenience of the reader. Indeed, consider $U_{j}:=a_{j}a_{j}^{\dagger}-a_{j}^{\dagger}a_{j}$, $V_{0}:=\idd$, and ${
V_{j}:=\prod_{n=1}^{j}U_{n}}$, for $j=1,2,\dots$\,. The Jordan--Klein--Wigner construction is defined as follows
\begin{align*}
&e_{11}(j):=a_{j}a_{j}^{\dagger}\,,\quad e_{12}(j):=V_{j-1}a_{j}\,,\nn\\
&e_{21}(j):=V_{j-1}a_{j}^{\dagger}\,,\quad e_{22}(j):=a_{j}^{\dagger}a_{j}\,,
\end{align*}
and provides a system of
commuting matrix units $\{e_{kl}(j)\,|\,k,l=1,2\}_{j\in \bn}$ in $\carf(\bn)$.
Now fix a set of points $[1,l]\subset\bn$ and consider the system of matrix units localised in $r\in\bn$, that is
$$
\{\eps_{i_rj_r}(r)\}_{i_rj_r=1,2}\subset\overline{\bigotimes_{\bn}\bm_2(\bc)}^{C^*}\,.
$$
It turns out that the map
\begin{equation*}
e_{i_1j_1}(1)\cdots e_{i_lj_l}(l)\mapsto
\eps_{i_1j_1}(1)\otimes\cdots\otimes\eps_{i_lj_l}(l)
\end{equation*}
where $i_k,j_k=1,2$, $k=1,2,\dots,l$ and $l\in\bn$, realises the isomorphism \eqref{jkw} as a consequence of \eqref{loc} and \eqref{loc2}.

We also recall that in this case the Fock representation of the CAR algebra is realised
on the anti-symmetric Fock space over $h:=\ell^2(\bn)$, as the map sending the algebraic
generators $a^{\dagger}_j$ and $a_j$ to the anti-symmetric creation and annihilation operators,
respectively \cite{BR2}. These operators will be explicitly recalled in Section \ref{AfdTralie},
where we return to the fermionic relations to solve a problem in duality that was raised in \cite{Dfer}.

\section{The Klein transformation}
\label{kjw}

We exhibit a simple but useful construction of a Jordan-Klein-Wigner (Klein for short) transformation. Such a (spatial) construction generalises the analogous one for the models living on the lattice $\bz^d$ and outlined in Section \ref{AfdZ2}, and includes those already present in literature ({\it e.g.} \cite{BR2, DHR1, DHR2, H}).

Consider a Hilbert space $\ch$, together with a self-adjoint unitary $\G\in\cb(\ch)$, {\it i.e.} $\G^2=\idd_\ch$, where $\idd_\ch$ is the identity operator on $\ch$.
The adjoint action $\g:=\ad_\G$ of $\G$ naturally equips
$\cb(\ch)$ with a $\bz_2$-grading.\footnote{Notice that $\g(\G)=\G\G\G^*=\G^3=\G$, hence $\G$ is even.} Notice that for any $a\in \cb(\ch)$
$$
\Gamma a_{+}=a_{+}\Gamma\,, \quad \Gamma a_{-}=-a_{-}\Gamma\,.
$$
We can define the {\it Klein transformation} $\k_\G$ on $\cb(\ch)$ induced by $\G$ as
$$
\k_\G(a_++a_-):=a_++\G a_-\,,\quad a_++a_-=a\in\cb(\ch)\,.
$$
Together with $\k_\G$, we can equally well define the map $\eta_\G$ by
\begin{equation}
\eta_\G(a_++a_-):=a_++\imath \G a_-\,,\quad a_++a_-=a\in\cb(\ch)\,.
\label{eta}
\end{equation}
Both $\k_\G$ and $\eta_\G$ are linear, invertible and identity preserving maps,
but $\eta_\G$ is a $*$-automorphism and $\k_\G$ is not unless $\G=\id_\ch$. Indeed, $\k^{-1}_\G=\k_\G$, and
\begin{equation}
\label{aa}
\eta^{-1}_\G(a_++a_-):=a_+-\imath \G a_-\,,\quad a_++a_-=a\in\cb(\ch)\,.
\end{equation}
Moreover,
$\k_\G(a^*)=\k_\G(\g(a))^*$, and $\k_\G(ab)-\k_\G(a)\k_\G(b)=2a_-b_-$ for any $a,b\in\cb(\ch)$.
We refer to $\eta_\G$ as the \emph{twist automorphism} of  $\cb(\ch)$ induced by $\G$.
Notice that in terms of the linear bijection
\begin{equation}
\widetilde{\varepsilon}:=\varepsilon _{1}+\imath \varepsilon _{-1}:\cb(\ch)\rightarrow \cb(\ch)
\label{epsilon}
\end{equation}
(whose inverse is given by $\widetilde{\varepsilon}^{-1}=\varepsilon_{1}-\imath
\varepsilon_{-1}$), we have that
\begin{equation}
\eta_{\Gamma }=\widetilde{\varepsilon}\circ\kappa_{\Gamma
}=\kappa_{\Gamma}\circ\widetilde{\varepsilon}\,, \label{kap&tau}
\end{equation}
which is relevant in relation to even maps in Section \ref{AfdDualiteit}.

The twist automorphism $\eta_\G$ of $\cb(\ch)$ is in fact the adjoint map of a unitary $K$ on $\ch$.
More in detail, let $P_{+}$ and $P_{-}$ the two projections onto the eigenspaces of $\G$, \emph{i.e.}
\begin{equation*}
P_{+}:=\frac{1}{2}(\idd_{\ch}+\Gamma )\text{ \ \ and \ \ }P_{-}:=\frac{1}{2}(\idd_{\ch}-\Gamma)\,,
\end{equation*}
and define the unitary
\begin{equation}
K:=P_{+}-\imath P_{-}\,.  \label{Kdef}\
\end{equation}
Observe that, for any $a\in \cb(\ch)$,
\begin{equation*}
K^{2}=\Gamma\,, \quad a_{+}K=Ka_{+}\,, \quad a_{-}K^{\ast
}=\imath Ka_{-}\,.
\end{equation*}
As a consequence,
\begin{equation}
\eta_{\Gamma}(a)=KaK^{\ast}  \label{kap&K}
\end{equation}
for any $a\in \cb(\ch)$.

The following definition gives a key concept for the paper, see \emph{e.g.} \cite{DHR1}, Section IV.
\begin{defin}
	\label{verwKom}
For a Hilbert space $\ch$, fix a self-adjoint unitary $\G\in\cb(\ch)$.
For any $S\subseteq \cb(\ch)$, the twisted commutant $S_\G^{\wr}$ of $S$ is defined as $S_\G^{\wr}:=\k_\G(S')$.
\end{defin}
\vskip.3cm
The twisted commutant $S_\G^{\wr}$ depends on the self-adjoint unitary $\G$. Sometimes, we omit to point out such a dependence if this causes no confusion.
We mention that the term ``graded commutant'' is also used for the twisted commutant, see \emph{e.g.} \cite{W}, Section 12.

For the remainder of this section we focus on the von Neumann algebraic set-up, which
forms the basis for duality in Sections \ref{AfdDualiteit} and \ref{AfdTralie}.
\begin{prop}
	\label{vKomForm}
Let $(\cam,\ch)$ be a von Neumann algebra and $\G\in \cb(\ch)$ a self-adjoint unitary. Then
$\cam_\G^{\wr}=\eta_\G(\cam')$, and therefore $\cam_\G^{\wr}$ is a von Neumann algebra.
\end{prop}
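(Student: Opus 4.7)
The plan is to exploit the factorization $\eta_\G = \kappa_\G \circ \widetilde{\varepsilon}$ established just before the proposition, which yields at once
$$\eta_\G(\cam') = \kappa_\G\bigl(\widetilde{\varepsilon}(\cam')\bigr).$$
Since by definition $\cam_\G^\wr = \kappa_\G(\cam')$, the identity $\cam_\G^\wr = \eta_\G(\cam')$ is equivalent to showing that $\widetilde{\varepsilon}$ leaves the commutant globally invariant, i.e.\ $\widetilde{\varepsilon}(\cam') = \cam'$.

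For the latter, one uses that $\cam$ is $\g$-graded (with $\g = \ad_\G$), which is the standing setting of this section. The grading then automatically transfers to the commutant: if $a' \in \cam'$ and $a \in \cam$, then $\g(a) \in \cam$, so $a'\g(a) = \g(a)a'$; applying the $*$-automorphism $\g$ and using $\g^2 = \id$ gives $\g(a')a = a\g(a')$, hence $\g(a') \in \cam'$. Consequently every $a' \in \cam'$ splits into homogeneous components $a'_\pm = \tfrac{1}{2}(a' \pm \g(a')) \in \cam'$, and therefore $\widetilde{\varepsilon}(a') = a'_+ + i\,a'_-$ again lies in $\cam'$. The same reasoning applied to $\widetilde{\varepsilon}^{-1} = \varepsilon_1 - i\varepsilon_{-1}$ yields the reverse inclusion, so $\widetilde{\varepsilon}(\cam') = \cam'$ and hence $\cam_\G^\wr = \eta_\G(\cam')$.

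For the concluding statement that $\cam_\G^\wr$ is a von Neumann algebra, one invokes the explicit form $\eta_\G(a) = K a K^*$ with $K := P_+ - iP_-$ unitary. Thus $\eta_\G$ is an inner $*$-automorphism of $\cb(\ch)$, so $\eta_\G(\cam') = K\cam'K^* = (K\cam K^*)'$ is manifestly a von Neumann subalgebra of $\cb(\ch)$.

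The one delicate point is the identity $\widetilde{\varepsilon}(\cam') = \cam'$: it genuinely depends on $\cam$ (and hence $\cam'$) being $\g$-invariant. Without this, $\kappa_\G$ --- which is a $*$-homomorphism only on homogeneous elements --- can take $\cam'$ outside the class of $*$-algebras altogether, and the identification with the von Neumann algebra $\eta_\G(\cam')$ would collapse. Once this $\g$-invariance is in place, the proof is essentially the translation of \eqref{kap&tau} into the language of commutants.
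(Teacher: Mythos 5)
Your proof is correct, and at its core it rests on the same mechanism as the paper's: the image of $\cam'$ under $\kappa_\G$ is unchanged when the odd component is multiplied by $\imath$. The paper carries this out as a direct set computation, writing $\kappa_\G(\cam')=\eps_1(\cam')\oplus\G\eps_{-1}(\cam')$ and absorbing a factor $-\imath$ into the linear subspace $\eps_{-1}(\cam')$; you instead factor $\eta_\G=\kappa_\G\circ\widetilde{\varepsilon}$ via \eqref{kap&tau} and reduce everything to $\widetilde{\varepsilon}(\cam')=\cam'$. The two routes are equivalent, but yours has the merit of isolating exactly where the hypothesis $\G\cam\G=\cam$ enters: the paper's opening identity already presupposes the decoupled decomposition $\cam'=\eps_1(\cam')\dot{+}\eps_{-1}(\cam')$, which — just like your $\widetilde{\varepsilon}$-invariance — requires $\g(\cam')=\cam'$, and the statement genuinely fails for a self-adjoint unitary whose adjoint action does not preserve $\cam$ (a $2\times 2$ example with $\G=\mathrm{diag}(1,-1)$ and $\cam$ the masa generated by a generic rank-one projection already shows $\kappa_\G(\cam')\neq\eta_\G(\cam')$). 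The paper leaves this hypothesis implicit in the standing set-up of the section, so you are right to flag it explicitly. For the second assertion you invoke $\eta_\G=\ad_K$ from \eqref{kap&K}, which makes $\eta_\G(\cam')=K\cam'K^*=(K\cam K^*)'$ immediately a von Neumann algebra; the paper instead verifies $\eta_\G(\cam')=\eta_\G(\cam)'$ by hand. Your shortcut is cleaner for the statement at hand, but note that the identity $\eta_\G(\cam')=\eta_\G(\cam)'$ is reused later (in \eqref{VerwKomItvTau} and Proposition \ref{gam'}), and your argument yields it anyway as $(K\cam K^*)'=\eta_\G(\cam)'$, so nothing is lost.
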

\begin{proof}
With $\eta=\eta_\G$ and $\cam^{\wr}=\cam_\G^{\wr}$, first we have
\begin{align*}
\cam^{\wr}=&\eps_1(\cam')\oplus\G\eps_{-1}(\cam')=\eps_1(\cam')\oplus\G\eps_{-1}(-\imath\cam')\\
=&\eps_1(\cam')\oplus\G\big(-\imath\eps_{-1}(\cam')\big)=\eps_1(\cam')\oplus\imath\G\eps_{-1}(\cam')\\
=&\eta(\cam')\,.
\end{align*}

The fact that $\cam^{\wr}$ is a von Neumann algebra easily follows if one checks $\eta(\cam')=\eta(\cam)'$. Indeed, suppose $x'\in\cam'$. For any $x\in\cam$
$$
\eta(x')\eta(x)=\eta(x'x)=\eta(xx')=\eta(x)\eta(x')\,,
$$
hence $\eta(\cam')\subset\eta(\cam)'$. Let now take $y\in\cb(\ch)$ such that $y\eta(x)-\eta(x)y=0$ for each $x\in\cam$, {\it i.e.} $y\in\eta(\cam)'$. Then
$$
\eta\big(\eta^{-1}(y)x-x\eta^{-1}(y))=0\iff \eta^{-1}(y)x=x\eta^{-1}(y)\,,\quad x\in\cam\,.
$$
Therefore, $\eta^{-1}(y)\in\cam'$,
that is $\eta(\cam)'\subset\eta(\cam')$, which combined with the previous computations leads to the assertion.
\end{proof}
As \eqref{aa} yields
\begin{equation}
\kappa _{\Gamma }(\mathcal{M})=\eta _{\Gamma}(\mathcal{M})=\eta _{\Gamma
}^{-1}(\mathcal{M})\,,  \label{kapVsInvkap}
\end{equation}
Proposition \ref{vKomForm} and \eqref{kapVsInvkap} give the following chain of equalities
\begin{equation}
\kappa _{\Gamma }(\mathcal{M}^{\prime })=\eta _{\Gamma }(\mathcal{M}^{\prime
})=\mathcal{M}_{\Gamma }^{\wr }=\eta _{\Gamma }(\mathcal{M})^{\prime }=\kappa _{\Gamma
}(\mathcal{M})^{\prime }\,.  \label{VerwKomItvTau}
\end{equation}
With an abuse of language, we still refer to the map $\eta_\G$
as a Klein transformation because it produces the same twisted commutant as $\k_\G$ and has
the added advantage of being a $*$-automorphism, which is crucial for the definition of the diagonal state in Section \ref{AfdDiag}.
\begin{prop}
\label{gam'}
Let  $(\cam, \ch)$ be a von Neumann algebra, and $\G\in\cb(\ch)$ a self-adjoint unitary.
Then
\begin{equation*}
\mathcal{M}_\G^{\wr \wr }:=(\mathcal{M}_\G^{\wr })_\G^{\wr }=\ad_{}\!_\G(\cam).
\end{equation*}
\end{prop}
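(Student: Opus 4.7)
The plan is to apply Proposition \ref{vKomForm} twice, taking advantage of the fact that $\eta_\Gamma$ — unlike $\kappa_\Gamma$ — is a genuine $*$-automorphism of $\cb(\ch)$, so it commutes with the operation of taking commutants. Once that is done, $\cam_\Gamma^{\wr\wr}$ reduces to $\eta_\Gamma^2(\cam)$, and the final identification $\eta_\Gamma^2=\ad_\Gamma$ is read off from the unitary $K$.

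First, set $\cn:=\cam_\Gamma^{\wr}$. By Proposition \ref{vKomForm}, $\cn$ is itself a von Neumann algebra and $\cn=\eta_\Gamma(\cam')$. Applying Proposition \ref{vKomForm} to $\cn$ gives
\[
\cam_\Gamma^{\wr\wr}=\cn_\Gamma^{\wr}=\eta_\Gamma(\cn')=\eta_\Gamma\!\bigl(\eta_\Gamma(\cam')'\bigr).
\]
So everything boils down to simplifying the inner commutant $\eta_\Gamma(\cam')'$.

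Second, I would verify the routine fact that, for any $*$-automorphism $\alpha$ of $\cb(\ch)$ and any subset $S\subseteq\cb(\ch)$, one has $\alpha(S)'=\alpha(S')$ (both inclusions follow immediately by applying $\alpha^{-1}$ to a commutation relation and using that $\alpha$ preserves products). Specialising to $\alpha=\eta_\Gamma$ and $S=\cam'$, and invoking the double commutant theorem $\cam''=\cam$, this yields
\[
\eta_\Gamma(\cam')'=\eta_\Gamma(\cam'')=\eta_\Gamma(\cam),
\]
hence
\[
\cam_\Gamma^{\wr\wr}=\eta_\Gamma\!\bigl(\eta_\Gamma(\cam)\bigr)=\eta_\Gamma^{\,2}(\cam).
\]

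Finally, I would show $\eta_\Gamma^{\,2}=\ad_\Gamma$. By \eqref{kap&K} we have $\eta_\Gamma=\ad_K$ with $K=P_+-\imath P_-$ as in \eqref{Kdef}; the orthogonality $P_+P_-=0$ together with $P_++P_-=\idd_\ch$ immediately gives $K^2=P_+-P_-=\Gamma$ (as already observed right after \eqref{Kdef}). Consequently $\eta_\Gamma^{\,2}=\ad_{K^2}=\ad_\Gamma$ (using $\Gamma^*=\Gamma$), and the proposition follows.

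There is no substantial obstacle here; once one commits to working with $\eta_\Gamma$ rather than $\kappa_\Gamma$, Proposition \ref{vKomForm} and the identity $K^2=\Gamma$ carry the whole argument, and the only step requiring any care is the elementary bookkeeping that a $*$-automorphism of $\cb(\ch)$ intertwines the commutant operation.
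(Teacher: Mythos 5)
Your argument is correct and follows essentially the same route as the paper: both apply Proposition \ref{vKomForm} twice, use the fact that the $*$-automorphism $\eta_\Gamma$ intertwines the commutant operation (recorded in the paper as \eqref{VerwKomItvTau}) together with the double commutant theorem to collapse $\eta_\Gamma(\eta_\Gamma(\cam')')$ to $\eta_\Gamma^2(\cam)$, and conclude via $\eta_\Gamma^2=\ad_{K^2}=\ad_\Gamma$. No gaps.
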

\begin{proof}
With $\eta=\eta_\G$, we start by noticing that $\eta(\cam')=\eta(\cam)'$ ({\it cf.} \eqref{VerwKomItvTau}), and $\eta^2=\ad_\G$. Therefore, by Proposition \ref{vKomForm},
$$
(\mathcal{M}_\G^{\wr})_\G^\wr=\eta(\eta(\mathcal{M}^{\prime })^{\prime})=\eta(\eta(\mathcal{M})^{{\prime}{\prime}})
=\eta(\eta(\mathcal{M}))=\eta^2(\cam)=\ad_{}\!_\G(\cam)\,.
$$
\end{proof}
The following simple facts generalise the known ones on the standard vectors.
\begin{prop}
\label{gam''}
Let  $(\cam, \ch)$ be a von Neumann algebra, and $\G\in\cb(\ch)$ a self-adjoint unitary. For $\xi\in\ch$, and $K$ the unitary given in \eqref{Kdef}, we have
\begin{itemize}
\item[(i)] $\xi$ is cyclic for $\cam \iff K\xi$ is separating for $\cam^\wr_\G$,
\item[(ii)] $\xi$ is separating for $\cam \iff K\xi$ is cyclic for $\cam^\wr_\G$.
\end{itemize}
\end{prop}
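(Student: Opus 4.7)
The plan is to reduce both statements to the standard von Neumann algebra fact that a vector is cyclic for $\cam$ if and only if it is separating for $\cam'$ (and vice versa), by transporting the problem through the unitary $K$. The key observation is that combining Proposition \ref{vKomForm} with the identity \eqref{kap&K} yields the spatial description
\[
\cam_\G^{\wr}=\eta_\G(\cam')=K\cam'K^{*},
\]
so $\cam_\G^\wr$ is obtained from $\cam'$ by conjugation with the unitary $K$. Once this is in hand, the twisted commutant of $\cam$ is unitarily equivalent to the ordinary commutant of $\cam$, and the cyclic/separating dictionary on the right-hand side is just the usual one.

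For part (i), I would argue as follows. Any element of $\cam_\G^\wr$ has the form $b=Ka'K^{*}$ with $a'\in\cam'$, so
\[
bK\xi=Ka'K^{*}K\xi=Ka'\xi.
\]
Since $K$ is unitary, $bK\xi=0$ if and only if $a'\xi=0$. Hence $K\xi$ is separating for $\cam_\G^\wr$ iff every $a'\in\cam'$ with $a'\xi=0$ vanishes, i.e.\ iff $\xi$ is separating for $\cam'$. By the standard duality between cyclic and separating vectors, this is equivalent to $\xi$ being cyclic for $\cam$.

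For part (ii), a similar computation shows
\[
\cam_\G^\wr K\xi=K\cam'K^{*}K\xi=K\cam'\xi,
\]
and applying the (unitary, hence closed-range-preserving) operator $K$ gives
\[
\overline{\cam_\G^\wr K\xi}=K\,\overline{\cam'\xi}\,.
\]
Therefore $K\xi$ is cyclic for $\cam_\G^\wr$ iff $\overline{\cam'\xi}=\ch$, that is, iff $\xi$ is cyclic for $\cam'$, which by the same standard duality is equivalent to $\xi$ being separating for $\cam$.

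There is no real obstacle here: everything rests on the two facts $\cam_\G^\wr=K\cam'K^{*}$ and the unitarity of $K$, which reduce the twisted statements to textbook ones. The only mild subtlety is keeping track that both implications in each of (i) and (ii) follow from the same chain of equivalences, so no separate argument is needed for the reverse directions.
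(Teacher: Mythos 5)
Your proof is correct and follows essentially the same route as the paper: both rest on the identity $\cam_\G^{\wr}=\eta_\G(\cam')=K\cam'K^{*}$ (Proposition \ref{vKomForm} together with \eqref{kap&K}) and the unitarity of $K$, which transport the twisted statements to the standard cyclic/separating duality between $\cam$ and $\cam'$. The only difference is presentational: the paper proves (i) and declares (ii) ``similar,'' whereas you spell out both directions explicitly.
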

\begin{proof}
We prove (i), (ii) being similar. Indeed, $\xi$ is cyclic for $\cam \iff \xi$ is separating for
$$
K\cam'=K\cam'K^*K=(K\cam'K^*)K=\cam^\wr_\G K\,,
$$
that is $K\xi$ is separating for $\cam^\wr_\G$, and the last equality follows from \eqref{kap&K}.
\end{proof}
A case of interest for our purposes will be when
$(\ga,\th)$ and $\f\in\cs(\ga)$ are a $\bz_2$-graded $C^*$-algebra and an even state, respectively.
Since $\f$ is $\th$-invariant, we can look at the GNS covariant representation
$\big(\ch_\f,\pi_\f,V_{\f,\th},\xi_\f\big)$. As $V_{\f.\th}$ is a self-adjoint unitary,
we can directly consider the $\bz_2$-grading induced on $\cb(\ch_\f)$ by
its adjoint action, making $\big(\cb(\ch_\f),\ad_{V_{\f,\th}}\big)$ an involutive $\bz_2$-graded algebra in a canonical way.

Indeed, we consider a von Neumann algebra $(\mathcal{M},\ch)$ with
a $\bz_2$-grading given by the involutive $*$-automorphism $\theta$, and a cyclic vector $\xi$. We suppose also that the normal
state
\begin{equation*}
\mu (b)=\left\langle b\xi ,\xi \right\rangle\,, \quad b\in \mathcal{M}
\end{equation*}
is even, \emph{i.e.} $\m\circ \theta=\m$. In this case we say $\theta$ is a $\bz_2$-grading
of $(\mathcal{M}, \m)$, and $\theta$ and $\mu$ can be extended to all of $\cb(\ch)$. Namely, the map
$$
\Gamma b\xi: =\theta (b)\xi\,, \quad b\in\cam
$$
uniquely extends to a self-adjoint unitary
operator on $\ch$, denoted by $\G$ with an abuse of notation.

The $\ast$-automorphism $\g$ of $\cb(\ch)$, defined as $\gamma:= \text{ad}_\G$ endows $\cb(\ch)$ with a $\bz_2$-grading structure, and indeed extends $\theta$ since $\theta(b)=\G b \G$ for any $b\in \mathcal{M}$.

For the functional $\mu$, it is simply extended as an even state by $\omega\in \cs(\cb(\ch))$, where
\begin{equation*}
\omega (a):=\left\langle a\xi ,\xi \right\rangle
\end{equation*}
for all $a\in \cb(\ch)$. Notice that
\begin{equation}
\om(\kappa _{\Gamma }(a)b)=\om(ab)   \label{tauEienskap}
\end{equation}
for all $a,b\in \cb(\ch)$, this property being useful in connection with duality in Section \ref{AfdDualiteit}.

Moreover, since $\G\xi=\xi$, $\xi$ is invariant for $K$. Thus (\ref{kap&K}) yields that $\eta _{\Gamma }(\mathcal{M})$ is a
von Neumann algebra with cyclic vector $\xi $, and  $\omega \circ \eta _{\Gamma }=\omega$.
\begin{rem}
\label{rem1}
We point out the following useful facts:

(i) $\mathcal{M}_\G^{\wr \wr}=\mathcal{M}$;

(ii) $\xi$ is separating for $\mathcal{M}_\G^{\wr}$;

(iii) if $\xi$ is separating for $\mathcal{M}$, then it is cyclic for $\mathcal{M}_\G^{\wr}$.

\bigskip

Indeed, (i) follows from Proposition \ref{gam'} as $\ad_{}\!_\G(\cam)=\cam$, whereas (ii) and (iii) are immediate consequence of Proposition \ref{gam''}, since in our case $K\xi=\xi$.
\end{rem}
As the adjoint action of $\G$ leaves $\cam$ globally stable, one has $\ad_{}\!_\G(\cam')=\cam'$. As a consequence, $\g':=\g\lceil_{\cam'}$ is a $\mathbb{Z}_{2}$-grading of $(\cam', \mu')$, where $\mu':=\om\lceil_{\cam'}$. In addition, $\ad_{}\!_\G(\cam_\G^{\wr})=\cam_\G^{\wr}$, and therefore $\gamma ^{\wr }:=\gamma \lceil_{\mathcal{M}_\G^{\wr }}$ is a $\mathbb{Z}_{2}$-grading of $(\mathcal{M}_\G^{\wr },\mu ^{\wr })$, where $\mu ^{\wr }:=\om\lceil_{M_\G^{\wr }}$.

Finally, we notice that we can take the right version of $\k_\G$ as the Klein transformation, that is
	\begin{equation}
	\kappa _{\Gamma }\circ \gamma =
	\gamma \circ \kappa _{\Gamma}:
	\cb(\ch)\rightarrow \cb(\ch):a\mapsto a_{+}+a_{-}\Gamma\,.  \label{linktau}
	\end{equation}
This forces us to get the right version of $\eta_{\Gamma}$,
given by $\eta_\G^{-1}$ in \eqref{aa}. However, because of (\ref{kapVsInvkap}) and Proposition \ref{vKomForm}, the twisted commutant is not affected by this choice, nor is there any effect on the content of our main results regarding duality collected in Theorem \ref{alfaGamma+} and Theorem \ref{Oplossing}.

\section{The (algebraic) Fermi tensor product}
\label{tensor}

Let $\ga$ and $\gb$ be two involutive $\bz_2$-graded algebras,
or equivalently $*$-superalgebras in the common language from Section \ref{AfdZ2}.
We now discuss the definition of the involutive $\bz_2$-graded tensor product
$\ga\, \circled{\text{{\tiny F}}}\, \gb$ between $\ga$ and $\gb$, which perhaps could be known to experts.

Concerning the linear structure, one notices that
$$
\ga\odot\gb=\dot{+}_{i,j\in\bz_2}(\ga_i\odot\gb_j)=\ga\otimes\gb\,,
$$
where on the r.h.s. we used the symbol ``$\otimes$" to recall
that the usual operations of taking adjoint ``$\dagger$" and product ``${\bf\cdot}$" are also defined on the l.h.s.\,.

For homogeneous elements $a\in\ga$, $b\in\gb$ and $i,j\in\bz_2$, we recall the following definitions
\begin{align}
\begin{split}
\label{ecfps}
&\eeps(a,b):=\left\{\!\!\!\begin{array}{ll}
                      -1 &\text{if}\,\, \partial(a)=\partial(b)=-1\,,\\
                     \,\,\,\,\,1 &\text{otherwise}\,.
                    \end{array}
                    \right.\\
&\eps(i,j):=\left\{\!\!\!\begin{array}{ll}
                      -1 &\text{if}\,\, i=j=-1\,,\\
                     \,\,\,\,\,1 &\text{otherwise}\,.
                    \end{array}
                    \right.
\end{split}
\end{align}
Consider the generic elements $x,y\in\ga\odot\gb$. We can write
\begin{align}
\begin{split}
\label{xy}
&x:=\oplus_{i,j\in\bz_2}x_{i,j}\in\oplus_{i,j\in\bz_2}(\ga_i\odot\gb_j)\,,\\
&y:=\oplus_{i,j\in\bz_2}y_{i,j}\in\oplus_{i,j\in\bz_2}(\ga_i\odot\gb_j)\,,
\end{split}
\end{align}
and we set
\begin{equation}
\label{prstc}
\begin{split}
x^*:=&\sum_{i,j\in\bz_2}\eps(i,j)x_{i,j}^\dagger\, ,\\
xy:=&\sum_{i,j,k,l\in\bz_2}\eps(j,k)x_{i,j}{\bf\cdot}y_{k,l}\,.
\end{split}
\end{equation}
Notice that for $x=a\odot b$ and $y=A\odot B$, where $a,A\in \ga$ and $b,B\in\gb$,
\begin{align*}
x^*=&\eeps(a,b)x^\dagger=\eeps(a,b)a^*\odot b^*\,,\\
xy=&\eeps(b,A)(a\otimes b){\bf\cdot}(A\otimes B)=\eeps(b,A)aA\otimes bB\,.
\end{align*}
\begin{prop}
The involution and product operations on $\ga\odot\gb$ given in \eqref{prstc} are well defined, and make the linear space $\ga\odot\gb$ an involutive algebra.
\end{prop}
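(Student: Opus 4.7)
The plan is first to note that any $x\in\ga\odot\gb$ has unique homogeneous components $x_{i,j}\in\ga_i\odot\gb_j$ via the direct sum decomposition $\ga\odot\gb=\oplus_{i,j\in\bz_2}(\ga_i\odot\gb_j)$, so the formulas \eqref{prstc} define $x^*$ and $xy$ unambiguously. Conjugate-linearity of $x\mapsto x^*$ and bilinearity of $(x,y)\mapsto xy$ then follow at once, since $\dagger$ is conjugate-linear, ${\bf\cdot}$ is bilinear, and the $\eps(i,j)$ are fixed scalars. For $(x^*)^*=x$ one observes that $(\ga_i)^*=\ga_i$ and $(\gb_j)^*=\gb_j$ force $x_{i,j}^\dagger\in\ga_i\odot\gb_j$; then $\eps(i,j)^2=1$ together with the fact that $\dagger$ is itself an involution on $\ga\otimes\gb$ yields $(x^*)^*=\sum_{i,j}\eps(i,j)^2(x_{i,j}^\dagger)^\dagger=x$.

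The remaining axioms, associativity of the product and anti-multiplicativity of the involution, reduce by multilinearity to a sign check on homogeneous elements $x_{i,j},y_{k,l},z_{m,n}$. Since $x_{i,j}{\bf\cdot}y_{k,l}\in\ga_{ik}\odot\gb_{jl}$, unwinding \eqref{prstc} assigns $(xy)z$ the sign $\eps(j,k)\eps(jl,m)$ and $x(yz)$ the sign $\eps(l,m)\eps(j,km)$ in front of the associative product $x_{i,j}{\bf\cdot}y_{k,l}{\bf\cdot}z_{m,n}$; identifying $\bz_2$ with $\{0,1\}$ additively, both exponents reduce $\pmod 2$ to $jk+jm+lm$, so the two expressions coincide. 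In parallel, $(x_{i,j}y_{k,l})^*$ acquires the sign $\eps(ik,jl)\eps(j,k)$ in front of $y_{k,l}^\dagger{\bf\cdot}x_{i,j}^\dagger$, whereas $y_{k,l}^*x_{i,j}^*$ acquires $\eps(l,i)\eps(k,l)\eps(i,j)$ in front of the same expression, and both exponents reduce to $ij+il+kl\pmod 2$.

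The sole obstacle is this sign bookkeeping; everything else is immediate. I would carry out the two sign computations using the identification $\eps(p,q)=(-1)^{pq}$ in additive $\bz_2$, which renders each check a short modular-arithmetic exercise.
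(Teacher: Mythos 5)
Your proof is correct and follows the same route as the paper's: well-definedness is immediate from the uniqueness of the components in the direct sum decomposition $\ga\odot\gb=\dot{+}_{i,j}(\ga_i\odot\gb_j)$, and the algebra axioms reduce to sign checks on homogeneous elements. The paper dismisses those checks as ``almost immediate'' while you carry out the $(-1)^{pq}$ bookkeeping explicitly; your exponents $jk+jm+lm$ for associativity and $ij+il+kl$ for anti-multiplicativity of the involution are both correct.
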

\begin{proof}
It is almost immediate to show that the operations in \eqref{prstc} make $\ga\odot\gb$
an involutive algebra, provided they are well defined. For the latter property,
take $x$ and $y$ as in \eqref{xy}. If $x=0$ or $y=0$, then $x_{i,j}=0$ or $y_{i,j}=0$ for all $i,j\in\bz_2$, and therefore $x^*=0$ and $xy=0$.
\end{proof}
\begin{defin}
The tensor product $\ga\odot\gb$, endowed with the involution and product given in \eqref{prstc},
is called the (algebraic) Fermi tensor product of $\ga$ and $\gb$, and is denoted by
$\ga\, \circled{\rm{{\tiny F}}}\, \gb$.
\end{defin}
\vskip.3cm
For the involutive $\bz_2$-graded algebras $(\ga,\a)$ and $(\gb,\b)$,
their Fermi tensor product $\ga\, \circled{\rm{{\tiny F}}}\, \gb$ is naturally equipped with a structure of involutive
$\bz_2$-graded algebra, by putting
\begin{equation}
\label{picm}
\begin{split}
\big(\ga\, \circled{\rm{{\tiny F}}}\, \gb\big)_+:=&\big(\ga_+\odot\gb_+\big)\dot{+}\big(\ga_{-}\odot\gb_{-}\big)\,,\\
\big(\ga\, \circled{\rm{{\tiny F}}}\, \gb\big)_-:=&\big(\ga_+\odot\gb_{-}\big)\dot{+}\big(\ga_{-}\odot\gb_{+}\big)\,.
\end{split}
\end{equation}
In this situation, such a grading is induced by the involutive automorphism $\th=\a\, \circled{\rm{{\tiny F}}}\, \b$ given on the elementary tensors by
\begin{equation}
\label{aupf}
(\a\, \circled{\rm{{\tiny F}}}\, \b)(a\, \circled{\rm{{\tiny F}}}\, b):=\a(a)\, \circled{\rm{{\tiny F}}}\, \b(b)\,,\quad a\in\ga\,,\,\, b\in\gb\,.
\end{equation}

\section{The product state}
\label{AfdProdToest}
In the present section, for the involutive $\bz_2$-graded algebras $(\ga,\a)$ and $(\gb,\b)$, and for given $\om\in\cs(\ga)$ and $\f\in\cs(\gb)$,
we study whether the product functional of $\om$ and $\f$ is well defined and
positive on the Fermi tensor product $\ga\, \circled{\rm{{\tiny F}}}\, \gb$.

If needed, we suppose without loosing generality that $\ga$ and $\gb$ are
unital with units $\idd_\ga$ and $\idd_\gb$, respectively. The general situation can be generalised by adding the identities, or considering approximate identities on $\ga$ and $\gb$ as well.

Although a detailed study of the product functional relative to concrete systems based
on the CAR algebra can be found in \cite{AM1, AM2}, here we provide the analysis for the abstract Fermi systems we are dealing with.

To this aim, we first denote by $\cs(\ga)_+$ the convex set of the even states on $\ga$,
and observe that the linear structure of
the algebraic tensor product involutive algebra $\ga\, \circled{\rm{{\tiny F}}}\, \gb$ is indeed the same as $\ga\otimes \gb$ and $\ga\odot\gb$. Namely,
$$
\ga\otimes \gb=\ga\odot\gb=\ga\, \circled{\rm{{\tiny F}}}\, \gb
$$
as vector spaces.
For $\om\in\cs(\ga)$ and $\f\in\cs(\gb)$, the product functional $\om\times\f$ on $\ga\, \circled{\rm{{\tiny F}}}\,\gb$ is defined as usual by
\begin{equation}
\label{finprc}
\om\times\f\bigg(\sum_{j=1}^n a_j\, \circled{\rm{{\tiny F}}}\,b_j\bigg):=\sum_{j=1}^n\om(a_j)\f(b_j),\quad \sum_{j=1}^n a_j\, \circled{\rm{{\tiny F}}}\,b_j\in \ga\, \circled{\rm{{\tiny F}}}\,\gb\,.
\end{equation}
It is well known
that the functional given in \eqref{finprc} is well defined on $\ga\odot\gb$, and
therefore on $\ga\, \circled{\rm{{\tiny F}}}\, \gb$. In addition, even though it coincides
with the state $\psi_{\om,\f}$ on $\ga\otimes \gb$, in general it is not positive on the involutive algebra $\ga\, \circled{\rm{{\tiny F}}}\, \gb$.

In what follows, we show that the positivity criterion for product states established in
\cite{AM1} also holds for the most general case treated in the present paper.
\begin{prop}
\label{prst}
Let $\om\in\cs(\ga)$, $\f\in\cs(\gb)$ and suppose $\om$  or $\f$ is even. Then $\om\times\f$ is positive on $\ga\, \circled{\rm{{\tiny F}}}\, \gb$.
In addition, for any $x,y\in \ga\, \circled{\rm{{\tiny F}}}\, \gb$ one has
\begin{equation}
\begin{split}
\label{prodstar}
|(\om\times\f)(x)|\leq&(\om\times\f)(x^*x)^{1/2}\,,\\
(\om\times\f)(x^*y^*yx)\leq&C_y(\om\times\f)(x^*x)\,,
\end{split}
\end{equation}
where $C_y$ is a positive constant depending on $y$.
\end{prop}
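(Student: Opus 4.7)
The plan is to realise the product functional $\om\times\f$ as a vector state of an explicit $*$-representation of $\ga\ftp\gb$, built on the Hilbert space tensor product of the two GNS spaces via a Jordan--Klein--Wigner-type twist by the grading unitary on the even-state side. By symmetry between the two factors I treat the case $\f$ even (the case $\om$ even being analogous, with the twist moved to the $\ga$-side). Let $(\ch_\om,\pi_\om,\xi_\om)$ be the GNS triple of $\om$ and $(\ch_\f,\pi_\f,V_{\f,\b},\xi_\f)$ the covariant GNS quadruple associated with $(\gb,\b,\f)$, where $V_{\f,\b}$ is a self-adjoint unitary on $\ch_\f$ satisfying $V_{\f,\b}\pi_\f(b)V_{\f,\b}=\pi_\f(\b(b))$, $V_{\f,\b}^2=\idd$, and $V_{\f,\b}\xi_\f=\xi_\f$.

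Next I would define a map $\rho:\ga\ftp\gb\to\cb(\ch_\om\otimes\ch_\f)$ by setting
$$
\rho(a_+\ftp b):=\pi_\om(a_+)\otimes\pi_\f(b),\qquad \rho(a_-\ftp b):=\pi_\om(a_-)\otimes V_{\f,\b}\pi_\f(b),
$$
for $a_\pm\in\ga_\pm$ and $b\in\gb$, and extending by linearity. The central claim is that $\rho$ is a unital $*$-representation of $\ga\ftp\gb$ on $\ch_\om\otimes\ch_\f$ and that
$$
(\om\times\f)(z)=\langle\rho(z)\Omega,\Omega\rangle,\qquad z\in\ga\ftp\gb,
$$
where $\Omega:=\xi_\om\otimes\xi_\f$. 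The vector-state identity uses $V_{\f,\b}\xi_\f=\xi_\f$ to annihilate the twist inside the expectation. Multiplicativity of $\rho$ is verified by a case analysis on the parities $(\partial(a_1),\partial(b_1),\partial(a_2),\partial(b_2))$ in $\rho(a_1\ftp b_1)\rho(a_2\ftp b_2)$: whenever $a_2$ is odd, commuting $V_{\f,\b}$ through $\pi_\f(b_1)$ via the intertwining relation produces exactly the sign $\partial(b_1)$, which reproduces the factor $\eeps(b_1,a_2)$ appearing in the Fermi product \eqref{prstc}, with $V_{\f,\b}^2=\idd$ resolving the residual twist; in the remaining cases both sides match trivially. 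An analogous enumeration shows $\rho((a\ftp b)^*)=\rho(a\ftp b)^*$, the sign $\eeps(a,b)$ from the Fermi involution being generated by rearranging $\pi_\f(b)^*$ past $V_{\f,\b}$.

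Positivity of $\om\times\f$ then follows from
$$
(\om\times\f)(z^*z)=\langle\rho(z^*z)\Omega,\Omega\rangle=\|\rho(z)\Omega\|^2\geq 0.
$$
The first inequality in \eqref{prodstar} is the Cauchy--Bunyakovsky--Schwarz inequality of Proposition \ref{cbs} combined with the normalisation $(\om\times\f)(\idd)=1$, while the second follows from the boundedness of $\rho(y)$ (a finite sum of products of bounded operators on $\ch_\om\otimes\ch_\f$, since $\ga$ and $\gb$ are $C^*$-algebras):
$$
(\om\times\f)(x^*y^*yx)=\|\rho(y)\rho(x)\Omega\|^2\leq\|\rho(y)\|^2\,(\om\times\f)(x^*x),
$$
so one may take $C_y:=\|\rho(y)\|^2$. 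The main obstacle is the sign bookkeeping in the verification that $\rho$ respects product and involution: since the Fermi operations on $\ga\ftp\gb$ deviate from their ordinary tensor-product counterparts by grading-dependent signs, one must check that each such sign is reproduced exactly by commuting the twist $V_{\f,\b}$ past $\pi_\f$ and $\pi_\f^*$.
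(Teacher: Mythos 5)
Your proof is correct, and it takes a genuinely different route from the one in the paper. The paper's argument never constructs a representation of $\ga\ftp\gb$ at this stage: it simply computes, for homogeneous elementary tensors and using the evenness of $\om$ to kill the odd contributions, that $(\om\times\f)\big[(a\ftp b)^*(A\ftp B)\big]=\psi_{\om,\f}\big[(a\otimes b)^\dagger{\bf\cdot}(A\otimes B)\big]$, so that $(\om\times\f)(c^*c)=\psi_{\om,\f}(c^\dagger{\bf\cdot}c)\ge 0$ reduces positivity to the known positivity of the ordinary product state on $\ga\otimes_{\rm min}\gb$; the two inequalities are then read off from the GNS representation of $\psi_{\om,\f}$. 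You instead build an explicit Klein-twisted $*$-representation $\rho$ on $\ch_\om\otimes\ch_\f$ and exhibit $\om\times\f$ as the vector state of $\xi_\om\otimes\xi_\f$. I checked your sign bookkeeping: with the twist $V_{\f,\b}$ inserted on the left of $\pi_\f(b)$ exactly when $\partial(a)=-1$, the intertwining relation $\pi_\f(b_1)V_{\f,\b}=\partial(b_1)V_{\f,\b}\pi_\f(b_1)$ and $V_{\f,\b}^2=\idd$ reproduce precisely the factors $\eeps(b_1,a_2)$ and $\eeps(a,b)$ of \eqref{prstc}, and $V_{\f,\b}\xi_\f=\xi_\f$ gives the vector-state identity; the symmetric case ($\om$ even) works with a right multiplication by $V_{\om,\a}$ on the first factor triggered by $\partial(b)$. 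Your approach costs more up front (verifying that $\rho$ is multiplicative and $*$-preserving, a case analysis the paper only performs later, in Theorem \ref{gprdig}, for the diagonal state), but it buys more: it yields a concrete bounded representation of $\ga\ftp\gb$ in which $\om\times\f$ is a vector state — essentially an instance of Theorem \ref{tk} and a shortcut to the GNS-type description that the paper obtains separately, via Stinespring, in Section \ref{AfdProdGNS} — whereas the paper's computation is more elementary and self-contained for the positivity claim alone. Both routes give the two inequalities in \eqref{prodstar} in the same way, from boundedness of the representing operators.
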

\begin{proof}
We can suppose without loss of generality that $\om$ is even, \emph{i.e.} $\om\circ\a=\om$, the other case being similar.
Concerning the positivity of the product state, we compute for homogeneous $a,A\in\ga$ and $b,B\in\gb$
\begin{align*}
(\om\times\f)\big[(a\, \circled{\rm{{\tiny F}}}\, b)^*(A\, \circled{\rm{{\tiny F}}}\, B)\big]
=&(\om\times\f)\big[(a^*\, \circled{\rm{{\tiny F}}}\, b_+^*)(A\, \circled{\rm{{\tiny F}}}\, B)\big]\\
+&(\om\times\f)\big[(\a(a^*)\, \circled{\rm{{\tiny F}}}\, b_-^*)(A\, \circled{\rm{{\tiny F}}}\, B)\big]\\
=&(\om\times\f)\big(a^*A\, \circled{\rm{{\tiny F}}}\, b_+^*B\big)\\
+&(\om\times\f)\big(\a(a^*A)\, \circled{\rm{{\tiny F}}}\, b_-^*B\big)\\
=&\om(a^*A)\f(b^*B)\\
=&\psi_{\om,\f}\big[(a\otimes b)^\dagger{\bf\cdot}(A\otimes B)\big]\,.
\end{align*}
Therefore, for general
$$
c:=\sum_{i=1}^n a_i\, \circled{\rm{{\tiny F}}}\,  b_i\in\ga\, \circled{\rm{{\tiny F}}}\, \gb=\ga\otimes\gb
$$
it follows that
$$
(\om\times\f)(c^*c)=\sum_{i,j=1}^n\om(a_i^*a_j)\f(b_i^*b_j)=\psi_{\om,\f}(c^\dagger{\bf\cdot} c)\geq0\,.
$$
Concerning the last assertion, we consider the GNS representation
$\big(\ch_{\psi_{\om,\f}},\pi_{\psi_{\om,\f}},\xi_{\psi_{\om,\f}}\big)$ of
$\psi_{\om,\f}\in\cs(\ga\otimes_{\rm min}\gb)$. Then for $x,y\in \ga\, \circled{\rm{{\tiny F}}}\, \gb$,
\begin{align*}
|(\om\times\f)(x)|=&\big|\big\langle\pi_{\psi_{\om,\f}}(x)\xi_{\psi_{\om,\f}},\xi_{\psi_{\om,\f}}\big\rangle\big|\leq
\big\|\pi_{\psi_{\om,\f}}(x)\xi_{\psi_{\om,\f}}\big\|_{\ch_{\psi_{\om,\f}}}\\
=&\psi_{\om,\f}(x^\dagger{\bf\cdot}x)^{1/2}=(\om\times\f)(x^*x)^{1/2}\,.
\end{align*}
Analogously,
\begin{align*}
&(\om\times\f)(x^*y^*yx)=\psi_{\om,\f}(x^\dagger{\rm\cdot}y^\dagger{\rm\cdot}y{\rm\cdot}x)\\
=&\big\langle\pi_{\psi_{\om,\f}}(y)\pi_{\psi_{\om,\f}}(x)\xi_{\psi_{\om,\f}},\pi_{\psi_{\om,\f}}(y)\pi_{\psi_{\om,\f}}(x)\xi_{\psi_{\om,\f}}\big\rangle\\
=&\big\|\pi_{\psi_{\om,\f}}(y)\pi_{\psi_{\om,\f}}(x)\xi_{\psi_{\om,\f}}\big\|_{\ch_{\psi_{\om,\f}}}^2\\
\leq&\big\|\pi_{\psi_{\om,\f}}(y)\big\|_{\cb(\ch_{\psi_{\om,\f}})}^2\big\|\pi_{\psi_{\om,\f}}(x)\xi_{\psi_{\om,\f}}\big\|_{\ch_{\psi_{\om,\f}}}^2 \\
=&\big\|\pi_{\psi_{\om,\f}}(y)\big\|_{\cb(\ch_{\psi_{\om,\f}})}^2(\om\times\f)(x^*x)\,,
\end{align*}
and \eqref{prodstar} follows after taking
$C_y=\big\|\pi_{\psi_{\om,\f}}(y)\big\|_{\cb(\ch_{\psi_{\om,\f}})}^2\,.$
\end{proof}

\section{The Fermi tensor product $C^*$-algebra}
\label{fermicstar}
The present section is devoted to complete the Fermi algebraic product
$\ga\, \circled{\rm{{\tiny F}}}\,\gb$ of two $\bz_2$-graded $C^*$-algebras $\ga$ and $\gb$ w.r.t. the standard maximal norm, obtaining
$\ga\, \circled{\rm{{\tiny F}}}_{\rm max}\gb$.

On $\ga\, \circled{\rm{{\tiny F}}}\,\gb$, we define the seminorm
\begin{equation}
\label{norm}
[]c[]_{\rm max}:=\sup\{\|\pi(c)\|\mid\pi\,\,\text{is a representation}\},\,\, c\in\ga\, \circled{\rm{{\tiny F}}}\,\gb\,.
\footnote{By $[]\,\,\,[]_{\rm max}$, we are denoting the maximal norm \eqref{norm} on the $\bz_2$-graded algebra
$\ga\, \circled{\text{{\tiny F}}}\, \gb$ to distinguish that from the maximal $C^*$-cross norm$\|\,\,\,\|_{\rm max}$
on the usual tensor product $\ga\otimes\gb$.}
\end{equation}
\begin{theorem}
The seminorm in \eqref{norm} is indeed a $C^*$-norm on $\ga\, \circled{\rm{{\tiny F}}}\,\gb$.
\end{theorem}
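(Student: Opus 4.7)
The plan is to establish four facts in turn: $[]\cdot[]_{\rm max}$ takes finite values, it satisfies the $*$-seminorm axioms and the $C^*$-identity, and it is positive definite. Only the last is nontrivial.

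For \emph{finiteness}, given any $*$-representation $\pi:\ga\, \circled{\rm{{\tiny F}}}\,\gb\to\cb(\ch)$, I would introduce the restricted maps $\Pi_\ga(a):=\pi(a\, \circled{\rm{{\tiny F}}}\,\idd_\gb)$ and $\Pi_\gb(b):=\pi(\idd_\ga\, \circled{\rm{{\tiny F}}}\,b)$. Because $\idd_\ga$ and $\idd_\gb$ are even, formula \eqref{prstc} reduces the Fermi involution and product on their images to the usual ones, so $\Pi_\ga$ and $\Pi_\gb$ are ordinary $*$-representations of the $C^*$-algebras $\ga$ and $\gb$, and are hence contractive. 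Moreover $(a\, \circled{\rm{{\tiny F}}}\,\idd_\gb)(\idd_\ga\, \circled{\rm{{\tiny F}}}\,b)=a\, \circled{\rm{{\tiny F}}}\,b$ since $\eps(\partial(\idd_\gb),\partial(\idd_\ga))=1$, whence $\pi(a\, \circled{\rm{{\tiny F}}}\,b)=\Pi_\ga(a)\Pi_\gb(b)$. Writing $c=\sum_{j}a_j\, \circled{\rm{{\tiny F}}}\,b_j$ gives $\|\pi(c)\|\leq\sum_j\|a_j\|\|b_j\|$, a bound independent of $\pi$, so $[]c[]_{\rm max}<\infty$. The $C^*$-seminorm axioms---homogeneity, subadditivity, submultiplicativity, $[]x^*[]_{\rm max}=[]x[]_{\rm max}$, and $[]x^*x[]_{\rm max}=[]x[]_{\rm max}^2$---then transfer from $\cb(\ch)$ to $[]\cdot[]_{\rm max}$ upon taking the supremum.

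For \emph{positive definiteness}, given $0\neq c\in\ga\, \circled{\rm{{\tiny F}}}\,\gb$ I would exhibit a single bounded $*$-representation not killing $c$. Since $\ga\, \circled{\rm{{\tiny F}}}\,\gb$ coincides with $\ga\otimes\gb$ as a linear space, $c^\dagger{\bf\cdot}c$ is a nonzero positive element of the $C^*$-algebra $\ga\otimes_{\rm min}\gb$; hence there exist states $\om\in\cs(\ga)$, $\f\in\cs(\gb)$ with $\psi_{\om,\f}(c^\dagger{\bf\cdot}c)>0$. Passing to the even part $\om_e:=\frac{1}{2}(\om+\om\circ\a)$ preserves this strict positivity, since
\[
\psi_{\om_e,\f}(c^\dagger{\bf\cdot}c)=\frac{1}{2}\bigl(\psi_{\om,\f}(c^\dagger{\bf\cdot}c)+\psi_{\om\circ\a,\f}(c^\dagger{\bf\cdot}c)\bigr)
\]
is a sum of two non-negative terms whose first summand is strictly positive. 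As $\om_e$ is even, Proposition \ref{prst} produces a positive functional $\rho:=\om_e\times\f$ on $\ga\, \circled{\rm{{\tiny F}}}\,\gb$ satisfying both estimates of \eqref{prodstar}, which are precisely conditions (v) and (vi) of Theorem \ref{gnsrcfp}. The GNS construction therefore yields a bounded $*$-representation $(\ch_\rho,\pi_\rho,\xi_\rho)$, and the identity $\|\pi_\rho(c)\xi_\rho\|^2=\rho(c^*c)=\psi_{\om_e,\f}(c^\dagger{\bf\cdot}c)>0$ forces $\pi_\rho(c)\neq 0$, whence $[]c[]_{\rm max}>0$.

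The main obstacle is this last step: one must simultaneously separate $c^\dagger{\bf\cdot}c$ from $0$ inside the ordinary minimal tensor product \emph{and} arrange for the chosen state on $\ga$ to be even, so that Proposition \ref{prst} applies and produces a genuinely positive functional on the Fermi algebra. The identity $(\om\times\f)(c^*c)=\psi_{\om,\f}(c^\dagger{\bf\cdot}c)$ from that proposition is what bridges the Fermi and usual tensor products and makes the argument go through.
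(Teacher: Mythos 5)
Your proposal is correct and follows essentially the same route as the paper: finiteness plus the $C^*$-seminorm identities, and then positive definiteness via a product state $\psi_{\om,\f}$ separating $c^\dagger{\bf\cdot}c$ in $\ga\otimes_{\rm min}\gb$, symmetrized on the $\ga$-side to an even state (your $\om_e=\frac{1}{2}(\om+\om\circ\a)$ is exactly the paper's $\om\circ\eps$) so that Proposition \ref{prst} and Theorem \ref{gnsrcfp} yield a bounded GNS representation with $\|\pi(c)\xi\|^2=\psi_{\om_e,\f}(c^\dagger{\bf\cdot}c)>0$. The only cosmetic difference is in the finiteness step, where you factor $\pi(a\ftp b)=\Pi_\ga(a)\Pi_\gb(b)$ through the unital restrictions (giving the bound $\sum_j\|a_j\|\|b_j\|$, and implicitly assuming units, which is harmless) whereas the paper invokes Lemma 3.3 of \cite{Fid} for the bound $n\sum_i|\l_i|^2$; both are valid.
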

\begin{proof}
We start by noticing that for any $c\in \ga\, \circled{\rm{{\tiny F}}}\,\gb$ \eqref{norm}
satisfies $[]c^*c[]_{\rm max}=[]c[]^2_{\rm max}$, that is it defines a $C^*$-seminorm.

Fix now an element in $c\in \ga\, \circled{\rm{{\tiny F}}}\,\gb$ of the form $c=\sum_{i=1}^n\l_ia_i\, \circled{\rm{{\tiny F}}}\,\ b_i$
with $\l_i\in\mathbb{C}$, $a_i\in\ga$, $b_i\in\gb$, and $\|a_i\|,\,\|b_i\|\leq1$. By reasoning as in Lemma 3.3 in \cite{Fid}, it follows that
$$
\|\pi(c)^*\pi(c)\|\leq n\sum_{i=1}^n|\l_i|^2<+\infty.
$$
Therefore $[]c[]_{\rm max}$ is finite, since the inequality above is independent of the representation $\pi$.

Suppose that the $\bz_2$-grading on $\ga$ is induced by $\a\in\aut(\ga)$, and denote $\eps:=\frac{1}{2}(\id_{\ga}+\a)$
the conditional expectation of $\ga$ onto $\ga_+=\eps(\ga)$, the $C^*$-subalgebra of even elements.
In order to check that $c\neq 0\Rightarrow []c[]_{\rm max}>0$ for each such
$$
c\in\ga\, \circled{\rm{{\tiny F}}}\,\gb=\ga\otimes\gb\subset\ga\otimes_{\rm min}\gb\,,
$$
we fix states $\om\in\cs(\ga)$, $\f\in\cs(\gb)$ such that $\psi_{\om\circ\eps,\f}(c^\dagger{\bf\cdot}c)>0$.
These states exist because the set of product states separates the points of $\ga\otimes_{\rm min}\gb$, see {\it e.g.} \cite{T},
Theorem IV.4.9 (iii). By \eqref{prodstar} and Theorem \ref{gnsrcfp}, we can take the GNS representation
$\big(\ch_{\om\circ\eps\times\f},\pi_{\om\circ\eps\times\f},\xi_{\om\circ\eps\times\f}\big)$ and compute
\begin{align*}
[]c[]^2_{\rm max}\geq&\|\pi_{\om\circ\eps\times\f}(c)\|^2\geq\|\pi_{\om\circ\eps\times\f}(c)\xi_{\om\circ\eps\times\f}\|^2\\
=&\|\pi_{\psi_{\om\circ\eps,\f}}(c)\xi_{\psi_{\om\circ\eps,\f}}\|^2=\psi_{\om\circ\eps,\f}(c^\dagger{\bf\cdot}c)>0\,.
\end{align*}
\end{proof}
\begin{defin}
The completion of $\ga\, \circled{\rm{{\tiny F}}}\,\gb$ w.r.t. the norm in \eqref{norm} is denoted
by $\ga\, \circled{\rm{{\tiny F}}}_{\rm max}\gb$ and simply called the Fermi $C^*$-tensor product between the $\bz_2$-graded $C^*$-algebras $\ga$ and $\gb$.
\end{defin}
\vskip.3cm
\begin{rem}
Notice that $\ga\, \circled{\rm{{\tiny F}}}_{\rm max}\gb$ is naturally endowed with the following structure of $\bz_2$-graded algebra
$$
\ga\, \circled{\rm{{\tiny F}}}_{\rm max}\gb=(\ga\, \circled{\rm{{\tiny F}}}_{\rm max}\gb)_{+}\oplus(\ga\, \circled{\rm{{\tiny F}}}_{\rm max}\gb)_{-}
$$
where
$$
(\ga\, \circled{\rm{{\tiny F}}}_{\rm max}\gb)_{\pm}=\overline{(\ga\, \circled{\rm{{\tiny F}}}\,\gb)_{\pm}}
$$
for $(\ga\, \circled{\rm{{\tiny F}}}\,\gb)_{\pm}$ given in \eqref{picm}. Such a grading is induced by (the extension of)
the involutive automorphism in \eqref{aupf}.
\end{rem}
For the sake of completeness, we report the following statement, which is nothing else than the Fermi version of Proposition IV.4.7 of \cite{T}.
\begin{theorem}
\label{tk}
Let $(\ga_i,\a_i)$, $i=1,2$ be $\bz_2$-graded unital $C^*$-algebras, and $\gb$ an arbitrary unital $C^*$-algebra. If the unital $*$-homomorphisms
$$
\pi_i:\ga_i\to\gb,\quad i=1,2\,,
$$
satisfy
\begin{equation}
\label{com}
\pi_1(a_1)\pi_2(a_2)=\epsilon(a_1,a_2) \pi_2(a_2)\pi_1(a_1)\,,
\end{equation}
where $\eeps$ is given in \eqref{ecfps}, and $a_1\in \ga_1$, $a_2\in \ga_2$ are homogeneous elements,
then there exists a unique $*$-homomorphism $\pi$ of $\ga_1\, \circled{\rm{{\tiny F}}}_{\rm max}\ga_2$ into $\gb$ such that
$$
\pi(a_1\, \circled{\rm{{\tiny F}}}\,a_2)=\pi_1(a_1)\pi_2(a_2),\quad a_i\in\ga_i,\,\,i=1,2.
$$
Moreover, $\pi\big(\ga_1\, \circled{\rm{{\tiny F}}}_{\rm max}\ga_2\big)$ is the $C^*$-subalgebra of $\gb$ generated by $\pi_1(\ga_1)$ and $\pi_2(\ga_2)$.

If in addition $\gb$ is a $\bz_2$-graded $C^*$-algebra whose grading is generated by the involutive $\b\in\aut(\gb)$ satisfying
$$
\pi_i\circ \a_i=\b\circ \pi_i\,,\quad i=1,2\,,
$$
then $\pi\circ(\a_1\, \circled{\rm{{\tiny F}}}\,\a_2)=\b\circ\pi$.
 \end{theorem}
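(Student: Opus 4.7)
The plan is to first construct $\pi$ at the algebraic level, on $\ga_1\ftp\ga_2$, by setting $\pi(a_1\ftp a_2):=\pi_1(a_1)\pi_2(a_2)$ on elementary tensors and extending by linearity. Since the underlying linear space of the Fermi tensor product coincides with the ordinary algebraic tensor product $\ga_1\odot\ga_2$, well-definedness as a linear map is immediate from the universal property of $\odot$. It then remains to check that $\pi$ respects the twisted product and involution of $\ga_1\ftp\ga_2$, to pass to the norm completion, to verify uniqueness and the shape of the image, and finally to treat the equivariance claim.

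The multiplicativity check is the heart of the argument and is where hypothesis \eqref{com} enters. For homogeneous $a_1,A_1\in\ga_1$ and $a_2,A_2\in\ga_2$, the product rule in \eqref{prstc} gives
$$\pi\big((a_1\ftp a_2)(A_1\ftp A_2)\big)=\eeps(a_2,A_1)\,\pi_1(a_1)\pi_1(A_1)\pi_2(a_2)\pi_2(A_2),$$
while using \eqref{com} to swap $\pi_2(a_2)$ and $\pi_1(A_1)$ yields
$$\pi(a_1\ftp a_2)\pi(A_1\ftp A_2)=\eeps(A_1,a_2)\,\pi_1(a_1)\pi_1(A_1)\pi_2(a_2)\pi_2(A_2).$$
These coincide by the symmetry $\eeps(a_2,A_1)=\eeps(A_1,a_2)$. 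The analogous computation for the involution, using the formula $(a_1\ftp a_2)^*=\eeps(a_1,a_2)\,a_1^*\ftp a_2^*$ from \eqref{prstc} together with one application of \eqref{com} to $\pi_2(a_2^*)\pi_1(a_1^*)$, shows that $\pi(c^*)=\pi(c)^*$; here one uses that the involution preserves grades, so $\eeps(a_1^*,a_2^*)=\eeps(a_1,a_2)$. Extending by bilinearity yields a $*$-homomorphism $\pi:\ga_1\ftp\ga_2\to\gb$.

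Once $\pi$ is known to be a $*$-homomorphism into a $C^*$-algebra, boundedness with respect to the maximal seminorm $[]\cdot[]_{\rm max}$ from \eqref{norm} is automatic: composing $\pi$ with the universal representation of $\gb$ presents $\pi$ as a $*$-representation of $\ga_1\ftp\ga_2$, whence $\|\pi(c)\|\leq []c[]_{\rm max}$ for every $c\in\ga_1\ftp\ga_2$. Therefore $\pi$ extends uniquely by continuity to a $*$-homomorphism on $\ga_1\ftp_{\rm max}\ga_2$, still denoted $\pi$. Uniqueness of the extension is clear from density of $\ga_1\ftp\ga_2$ together with the prescription on elementary tensors, while the image $\pi(\ga_1\ftp_{\rm max}\ga_2)$ is the closure of the $*$-algebra generated by $\pi_1(\ga_1)\cup\pi_2(\ga_2)$, hence the $C^*$-subalgebra of $\gb$ they generate.

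The equivariance assertion is checked first on elementary tensors: by \eqref{aupf} and the intertwining hypothesis $\pi_i\circ\a_i=\b\circ\pi_i$,
$$\pi\circ(\a_1\ftp\a_2)(a_1\ftp a_2)=\b(\pi_1(a_1))\b(\pi_2(a_2))=\b\circ\pi(a_1\ftp a_2),$$
and this propagates to $\ga_1\ftp\ga_2$ by linearity and to $\ga_1\ftp_{\rm max}\ga_2$ by continuity of $\b$. The only real source of friction in the whole argument is the careful bookkeeping of the twisting signs that link the Fermi product in \eqref{prstc} with the twisted commutation relation \eqref{com}; once this matching is in place, the boundedness and extension steps are forced by the very definition of $[]\cdot[]_{\rm max}$.
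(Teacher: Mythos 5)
Your proof is correct and follows essentially the same route as the paper: define $\pi$ on $\ga_1\ftp\ga_2$ via the universal property, verify the $*$-homomorphism property on homogeneous elementary tensors using the symmetry of $\eeps$ and relation \eqref{com}, extend to the completion, and check equivariance on generators. The one point where you are more self-contained is the boundedness step: the paper appeals to Propositions 4.1 and 4.3 of \cite{CrF2}, whereas you observe directly that composing with the universal representation of $\gb$ exhibits $\pi$ as a representation of $\ga_1\ftp\ga_2$, so $\|\pi(c)\|\leq []c[]_{\rm max}$ follows immediately from the definition \eqref{norm} of the maximal seminorm; this is a clean and valid shortcut.
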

\begin{proof}
The map $(a_1,a_2)\in \ga_1\times \ga_2 \mapsto \pi_1(a_1)\pi_2(a_2)\in\gb$ is a bilinear form. Therefore, by universal property of the tensor product
$\ga_1\odot \ga_2=\ga_1\, \circled{\rm{{\tiny F}}}\,\ga_2$, there is a unique linear map $\pi_o: \ga_1\, \circled{\rm{{\tiny F}}}\,\ga_2\rightarrow \gb$ such that
$$
\pi_o(a_1\, \circled{\rm{{\tiny F}}}\,a_2)=\pi_1(a_1)\pi_2(a_2),\quad a_i\in\ga_i,\,\,i=1,2\,.
$$
We now check that $\pi_o$ is a $*$-homomorphism. Indeed, for $c:=\sum_{i=1}^m x_i\, \circled{\rm{{\tiny F}}}\, y_i$ and
$d:=\sum_{j=1}^n r_j\, \circled{\rm{{\tiny F}}}\, s_j$ in
$\ga_1\, \circled{\rm{{\tiny F}}}\,\ga_2$ with the $x_i$, $y_i$, $r_j$ and $s_j$ homogeneous, we get by \eqref{prstc} and \eqref{com},
\begin{align*}
\pi_o(c^*)=&\sum_{i=1}^m\pi_o\big(\eeps(x_i,y_i)\pi_o(x_i^*\, \circled{\rm{{\tiny F}}}\, y_i^*\big)=\sum_{i=1}^m\eeps(x_i,y_i)\pi_1(x_i^*)\pi_2(y_i^*)\\
=&\sum_{i=1}^m\big(\epsilon(x_i,y_i)\pi_2(y_i)\pi_1(x_i)\big)^*
=\sum_{i=1}^m\big(\pi_1(x_i)\pi_2(y_i)\big)^*\\
=&\bigg(\sum_{i=1}^m\pi_1(x_i)\pi_2(y_i)\bigg)^*=\pi_o(c)^*\,,
\end{align*}
and
\begin{align*}
\pi_o(cd)=&\sum_{i=1}^m\sum_{j=1}^n\pi_o\big((x_i\, \circled{\rm{{\tiny F}}}\, y_i)(r_j\, \circled{\rm{{\tiny F}}}\, s_j)\big)\\
=&\sum_{i=1}^m\sum_{j=1}^n\eeps(r_j,y_i)\pi_o\big((x_i r_j\, \circled{\rm{{\tiny F}}}\, y_i s_j)\big)\\
=&\sum_{i=1}^m\sum_{j=1}^n\pi_1(x_i)\big(\eeps(r_j,y_i)\pi_1(r_j)\pi_2(y_i)\big)\pi_2(s_j)\\
=&\bigg(\sum_{i=1}^m\pi_1(x_i)\pi_2(y_i)\bigg)\bigg(\sum_{j=1}^n\pi_1(r_j)\big)\pi_2(s_j)\bigg)\\
=&\pi_o(c)\pi_o(d)\,.
\end{align*}
$\pi_o$ is defined on the dense involutive subalgebra $\ga_1\, \circled{\rm{{\tiny F}}}\, \ga_2$ of
$\ga_1\, \circled{\rm{{\tiny F}}}_{\rm max}\ga_2$, then arguing as in \cite{CrF2}, Proposition 4.1 and Proposition 4.3, one finds
it uniquely extends to a bounded map, denoted by $\pi$, on the whole $\ga_1\, \circled{\rm{{\tiny F}}}_{\rm max}\ga_2$
which is indeed a $*$-homomorphism. As an immediate consequence, $\pi\big(\ga_1\, \circled{\rm{{\tiny F}}}_{\rm max}\ga_2\big)$
is generated as a $C^*$-algebra by $\pi_1(\ga_1)$ and $\pi_2(\ga_2)$.

Concerning the last assertion, it is enough to prove this on the dense set of generators of the
form $c=\sum_{i=1}^nx_i\, \circled{\rm{{\tiny F}}}\, y_i$, obtaining by \eqref{aupf}
\begin{align*}
\pi\big((\a_1\, \circled{\rm{{\tiny F}}}\, \a_2&)(c)\big)=\sum_{i=1}^n\pi\big(\a_1(x_i)\, \circled{\rm{{\tiny F}}}\,\a_2(y_i)\big)
=\sum_{i=1}^n\pi_1(\a_1(x_i))\pi_2(\a_2(y_i))\\
=&\sum_{i=1}^n\b(\pi_1(x_i))\b(\pi_2(y_i))=\b\bigg(\sum_{i=1}^n\pi_1(x_i)\pi_2(y_i)\bigg)
=\b(\pi(c))\,.
\end{align*}
\end{proof}
We end the present section by briefly discussing the following fact, for the sake of completeness.
Indeed, define
\begin{equation*}
[]c[]_{\rm min}:=\sup\{\|\pi_{\f\times\psi}(c)\|\mid \f\in\cs(\ga)_+,\,\,\psi\in\cs(\gb)_+\},\quad c\in\ga\, \circled{\rm{{\tiny F}}}\,\gb\,.
\end{equation*}
Obviously, $[]c[]_{\rm min}\leq[]c[]_{\rm max}$. Since the set of the even product states considered above separates the points of $\ga\, \circled{\rm{{\tiny F}}}\,\gb$
({\it cf.} \cite{T},
Theorem IV.4.9 (iii)), $[]c[]_{\rm min}$ is actually a norm, and
$$
\overline{\ga\, \circled{\rm{{\tiny F}}}\,\gb}^{[]\,\,[]_{\rm min}}\sim\overline{\big(\oplus_{\{\f,\psi\mid \f\in\cs(\ga)_+,\,\psi\in\cs(\gb)_+\}}\pi_{\f\times\psi}\big)\big(\ga\, \circled{\rm{{\tiny F}}}\,\gb\big)}\,.
$$
This norm can be viewed as the analogue of the minimal $C^*$-cross norm in the Fermi situation,
see {\it e.g.} \cite{T}, Definition IV.4.8, for the case involving the usual tensor product.

In the sequel we never use $\ga\, \circled{\rm{{\tiny F}}}\,\gb$ equipped with $[]c[]_{\rm min}$. So
we do not to pursue this analysis further, postponing a more exhaustive treatment to somewhere else.

\section{The GNS representation of the product state}
\label{AfdProdGNS}

The present section is devoted to describe in some detail the GNS representation of
(the extension of) the product state on the Fermi $C^*$-tensor product between two $\bz_2$-graded $C^*$-algebras.

As before, $\eps:\ga:\to\ga_+$ is the conditional expectation of the $\bz_2$-graded $C^*$-algebra $(\ga, \th)$ onto its even part.

Since $[\ga_+,\gb]=0$ (in $\ga\, \circled{\rm{{\tiny F}}}_{\rm max}\gb$) with $\gb$ also a $\bz_2$-graded $C^*$-algebra, we note that
$$
\ga_+\, \circled{\rm{{\tiny F}}}\,\gb=\ga_+\otimes\gb\,\,\text{and}\,\,\ga_+\, \circled{\rm{{\tiny F}}}_{\rm max}\gb=\ga_+\otimes_{\rm max}\gb\,.
$$
Define now the linear map
$$
E_o:\ga\, \circled{\rm{{\tiny F}}}\,\gb\to\ga_+\, \circled{\rm{{\tiny F}}}\,\gb\subset\ga_+\, \circled{\rm{{\tiny F}}}_{\rm max}\gb
=\ga_+\otimes_{\rm max}\gb
$$
given, on the homogeneous elements $a\in\ga, b\in\gb$, by
$$
E_o(a\, \circled{\rm{{\tiny F}}}\,b):=
\eps(a)\, \circled{\rm{{\tiny F}}}\,b=\eps(a)\otimes b\,.
$$
It is well defined by the universal property of tensor products, and in addition the following result holds true.
\begin{lem}
\label{leco}
The linear map $E_o$ is completely positive, and therefore it extends to a bounded linear map
$$
E:\ga\, \circled{\rm{{\tiny F}}}_{\rm max}\gb\rightarrow\ga_+\, \circled{\rm{{\tiny F}}}_{\rm max}\gb=\ga_+\otimes_{\rm max}\gb\,,
$$
which is indeed a conditional expectation, preserving the identity if $\ga$ and $\gb$ are unital.
\end{lem}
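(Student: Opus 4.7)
My plan is to realise $E_o$ as an averaging of the identity under a $\bz_2$-action by a $*$-automorphism of $\ga\, \circled{\rm{{\tiny F}}}_{\rm max}\gb$, and then to invoke Tomiyama's theorem. First I would verify that $\Theta := \th\, \circled{\rm{{\tiny F}}}\, \id_\gb$ is a $*$-automorphism of the algebraic Fermi tensor product $\ga\, \circled{\rm{{\tiny F}}}\, \gb$. This reduces to checking compatibility with the twisted product and involution of \eqref{prstc}; the key observation is that $\th$ preserves the $\bz_2$-grading, so all sign factors $\eeps$ appearing there are left unchanged by $\Theta$. Because every representation $\pi$ of $\ga\, \circled{\rm{{\tiny F}}}\, \gb$ yields another representation $\pi\circ\Theta$, the definition \eqref{norm} forces $\Theta$ to be isometric for $[]\cdot[]_{\rm max}$ and hence to extend to a $*$-automorphism of $\ga\, \circled{\rm{{\tiny F}}}_{\rm max}\gb$, still denoted $\Theta$, with $\Theta^2 = \id$.

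I would then introduce the averaging $\tilde E := \tfrac12(\id + \Theta)$ on $\ga\, \circled{\rm{{\tiny F}}}_{\rm max}\gb$. Since $\Theta^2 = \id$, this is a contractive idempotent, and on elementary tensors one finds $\tilde E(a\, \circled{\rm{{\tiny F}}}\, b) = \eps(a)\, \circled{\rm{{\tiny F}}}\, b$, so $\tilde E$ is precisely the continuous extension of $E_o$. Tomiyama's theorem applied to this norm-one projection onto a $C^*$-subalgebra then yields at once complete positivity and the conditional expectation property, the range being the $[]\cdot[]_{\rm max}$-closure of the fixed-point $*$-subalgebra $\ga_+\, \circled{\rm{{\tiny F}}}\, \gb$.

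It remains to identify this closure with $\ga_+\otimes_{\rm max}\gb$. Since the sign $\eeps$ is trivial whenever one argument is even, the twisted operations of \eqref{prstc} restricted to $\ga_+\, \circled{\rm{{\tiny F}}}\, \gb$ reduce to the ordinary ones on $\ga_+\otimes\gb$, so the two $*$-algebras coincide. The inherited Fermi $C^*$-norm is clearly dominated by the maximal $C^*$-tensor norm by restriction of representations; for the reverse inequality I would use Theorem \ref{tk}, decomposing a generic representation of $\ga_+\otimes\gb$ into commuting subrepresentations of $\ga_+$ and $\gb$, extending the $\ga_+$-part to a representation of the full $\ga$ on a possibly larger Hilbert space carrying a self-adjoint unitary implementing the grading, and then applying Theorem \ref{tk} to assemble a dilating representation of $\ga\, \circled{\rm{{\tiny F}}}\, \gb$. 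Unit preservation when $\ga$ and $\gb$ are unital is immediate from $\tilde E(\idd_\ga\, \circled{\rm{{\tiny F}}}\, \idd_\gb) = \idd_\ga\, \circled{\rm{{\tiny F}}}\, \idd_\gb$. The step I expect to be the main obstacle is precisely this isometric identification of the range of $\tilde E$ with $\ga_+\otimes_{\rm max}\gb$; the averaging construction and the appeal to Tomiyama are otherwise essentially routine.
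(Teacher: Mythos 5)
Your proposal is correct, and it takes a genuinely different route from the paper's. The paper proves complete positivity of $E_o$ by hand: it checks that $E_o$ is real and a right $\ga_+\, \circled{\rm{{\tiny F}}}\,\gb$-module map, verifies positivity of $E_o(x^*x)$ directly by decomposing the positive matrix $[\eps(a_i^*a_j)]_{i,j}$ over $\ga_+$ into a sum of matrices of the form $[c_i(k)^*c_j(k)]_{i,j}$, upgrades positivity to complete positivity by a commutation argument as in \cite{St}, and finally extends to the completion by invoking the cp-extension result of \cite{CrF2}, Proposition 4.1. Your averaging argument --- checking that $\Theta=\th\, \circled{\rm{{\tiny F}}}\,\id_\gb$ respects the twisted operations \eqref{prstc} because $\th$ preserves degrees (so all signs $\eeps$ are unchanged), extending $\Theta$ isometrically to $\ga\, \circled{\rm{{\tiny F}}}_{\rm max}\gb$ by universality of the norm \eqref{norm}, and applying Tomiyama's theorem to the norm-one idempotent $\tfrac12(\id+\Theta)$ --- delivers positivity, complete positivity, contractivity and the bimodule property in one stroke, with no matrix computation; this is cleaner and more conceptual than the paper's verification. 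The one point where your argument is thin is the isometric identification of the range, i.e.\ of the closure of $\ga_+\otimes\gb$ inside $\ga\, \circled{\rm{{\tiny F}}}_{\rm max}\gb$ with the external $\ga_+\otimes_{\rm max}\gb$; but the paper is equally thin here, asserting the identification just before the lemma and implicitly recovering the nontrivial inequality from the fact that its extension $E$ is a contraction into $\ga_+\otimes_{\rm max}\gb$ restricting to the identity on $\ga_+\otimes\gb$ (the inequality $[]c[]_{\rm max}\leq\|c\|_{\rm max}$ being the trivial direction, by restriction of representations). Your dilation sketch via Theorem \ref{tk} can be made to work, though a shorter route consistent with your setup is to feed the positive functionals $\psi\circ E_o$, for $\psi$ a state on $\ga_+\otimes_{\rm max}\gb$, into the GNS machinery of Theorem \ref{gnsrcfp}: by the bimodule property the resulting representation of $\ga\, \circled{\rm{{\tiny F}}}\,\gb$ restricted to $\ga_+\otimes\gb$ contains $\pi_\psi$ as a subrepresentation, which yields $[]c[]_{\rm max}\geq\|c\|_{\rm max}$ on $\ga_+\otimes\gb$. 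Apart from this shared gap, which you correctly flag, your proof is sound.
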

\begin{proof}
It is easy to see that $E_o$ leaves the elements of $\ga_+\, \circled{\rm{{\tiny F}}}\, \gb$ invariant
and preserves the identity, provided $\ga$ and $\gb$ are unital. Moreover, it is a real map. Indeed, for $a\in\ga$ and $b\in\gb$ \eqref{prstc} gives
\begin{align*}
&E_o(a\, \circled{\rm{{\tiny F}}}\,b)^*
=(\eps(a)\, \circled{\rm{{\tiny F}}}\, b)^*
=\eps(a)^*\, \circled{\rm{{\tiny F}}}\, b^*
\\
=&\eps(a^*)\, \circled{\rm{{\tiny F}}}\, b^* =E_o(a^*\, \circled{\rm{{\tiny F}}}\, b^*)=E_o\big((a\, \circled{\rm{{\tiny F}}}\, b)^*\big)\,.
\end{align*}
In particular, this implies that the bimodule property for $E_o$ follows as soon as
we show it is right $\ga_+\, \circled{\rm{{\tiny F}}}\, \gb$-linear. To this aim, consider
$$
x:=\sum_{i=1}^m a_i\, \circled{\rm{{\tiny F}}}\, b_i\in \ga\, \circled{\rm{{\tiny F}}}\, \gb\,, \quad
y:=\sum_{j=1}^n c_j\, \circled{\rm{{\tiny F}}}\, d_j\in \ga_+\, \circled{\rm{{\tiny F}}}\, \gb\,,
$$
where the elementary factors in the tensors can be taken homogeneous. Since the $c_j$ belong to $\ga_+$,
$$
\eeps(b_i,c_j)=1\,,\quad i=1,\dots,m\,,\,\,\,j=1,\dots,n\,.
$$
Therefore, as $\eps$ is a $\ga_+$-bimodule map, we have
\begin{align*}
E_o(xy)=&\sum_{i=1}^m\sum_{j=1}^n E_o(a_ic_j\, \circled{\rm{{\tiny F}}}\, b_id_j)
=\sum_{i=1}^m\sum_{j=1}^n \eps(a_i)c_j\, \circled{\rm{{\tiny F}}}\,  b_id_j \\
=&\sum_{i=1}^m\sum_{j=1}^n \big(\eps(a_i)\, \circled{\rm{{\tiny F}}}\, b_i\big)\big(c_j\, \circled{\rm{{\tiny F}}}\, d_j\big)\\
=&\bigg(\sum_{i=1}^m\eps(a_i)\, \circled{\rm{{\tiny F}}}\, b_i\bigg)\bigg(\sum_{j=1}^n c_j\, \circled{\rm{{\tiny F}}}\, d_j\bigg)\\
=&E_o(x)y\,.
\end{align*}
We now show that $E_o$ is completely positive. By reasoning as in Proposition 9.3 of \cite{St}, it is enough to verify that it is positive.
To this goal, first take  homogeneous elements $a,c\in\ga$ and $b,d\in\gb$. Then
$$
(a\, \circled{\rm{{\tiny F}}}\, b)^*(c\, \circled{\rm{{\tiny F}}}\, d)=\eeps(a,b)\eeps(c,b)(a^*c\, \circled{\rm{{\tiny F}}}\, b^*d)\,.
$$
We note that if $\partial(a)\partial(c)=-1$ then $\eps(a^*c)=0$, and if  $\partial(a)\partial(c)=1$ then $\eeps(a,b)=\eeps(c,b)$. Therefore,
$$
E_o\big((a\, \circled{\rm{{\tiny F}}}\, b)^*(c\, \circled{\rm{{\tiny F}}}\, d)\big)=\eps(a^*c)\, \circled{\rm{{\tiny F}}}\, b^*d\,.
$$
Thus in all situations, for $x=\sum_{i=1}^n a_i\, \circled{\rm{{\tiny F}}}\,b_i\in \ga\, \circled{\rm{{\tiny F}}}\,\gb$ we obtain
\begin{align*}
E_o(x^*x)=
&E_o\bigg(\bigg(\sum_{i=1}^n a_i\, \circled{\rm{{\tiny F}}}\, b_i\bigg)^*
\bigg(\sum_{j=1}^n a_j\, \circled{\rm{{\tiny F}}}\,b_j\bigg)\bigg)\\
=&\sum_{i,j=1}^n \eps(a^*_ia_j)\, \circled{\rm{{\tiny F}}}\,b^*_ib_j\\
=&\sum_{k=1}^m \sum_{i,j=1}^n c_i(k)^* c_j(k)\, \circled{\rm{{\tiny F}}}\,b^*_ib_j\\
=&\sum_{k=1}^m\bigg(\sum_{i=1}^nc_i(k)\, \circled{\rm{{\tiny F}}}\,b_i\bigg)^*\bigg(\sum_{i=1}^nc_i(k)\, \circled{\rm{{\tiny F}}}\,b_i\bigg)\geq 0\,,
\end{align*}
 where we used the fact that, for some integer $m$, the positive matrix $[\eps(a^*_ia_j)]_{i,j=1}^n\in\bm_n(\ga_+)$
 can always be written as a sum of $m$ positive matrices
$\big\{[c_i(k)^*c_j(k)]_{i.j=1}^n\big\}_{k=1}^m\subset\bm_n(\ga_+)$ (see {\it e.g.} \cite{T},  Lemma IV.3.1).

The complete positivity of $E_o$ allows us to argue as in \cite{CrF2}, Proposition 4.1 and conclude
that it uniquely extends to a bounded linear map $E$ on the whole
$\ga\, \circled{\rm{{\tiny F}}}_{\rm max}\gb$, which turns out to be a conditional expectation onto
$\ga_+\, \circled{\rm{{\tiny F}}}_{\rm max}\gb=\ga_+\otimes_{\rm max} \gb$.
\end{proof}

Let $\F:\ga\to\gb$ be a completely positive map between the $C^*$-algebras $\ga$ and $\gb$,
together with a positive linear functional $\f$ on $\gb$. Obviously,
$\f\circ\F$ is a positive linear functional on $\ga$. Let $(\ch_\f,\pi_\f,\xi_\f)$ and
$(\ch_{\f\circ\F},\pi_{\f\circ\F},\xi_{\f\circ\F})$ be the GNS representations of $\f$ and
$\f\circ\F$, respectively. Consider the Stinespring dilation $(\ch,\pi, V)$ of $\pi_\f\circ\F:\ga\to\cb(\ch_\f)$, see {\it e.g.} \cite{T}, Theorem IV.3.6.
Let $P\in\pi(\ga)'$ be the self-adjoint projection onto the cyclic subspace $\pi(\ga)V\xi_\f\subset\ch$.
\begin{lem}
\label{leco1}
The GNS representation  $(\ch_{\f\circ\F},\pi_{\f\circ\F},\xi_{\f\circ\F})$ of the positive functional $\f\circ\F$ is $(P\ch,P\pi,V\xi_\f)$.
\end{lem}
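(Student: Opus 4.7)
The plan is to verify directly that the triple $(P\ch, P\pi, V\xi_\f)$ satisfies the three defining properties of a GNS representation of $\f\circ\F$, and then invoke uniqueness of the GNS construction up to unitary equivalence.

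First I would set notation by recalling that the Stinespring dilation gives $\pi_\f(\F(a)) = V^*\pi(a)V$ for all $a\in\ga$, and that $P\in\pi(\ga)'$ means $P\pi(a) = \pi(a)P$, so the map $\widetilde\pi(a) := \pi(a)\lceil_{P\ch}$ is well defined and is easily checked to be a $*$-representation of $\ga$ on $P\ch$ (linearity, multiplicativity and $*$-preservation being inherited from $\pi$ on the invariant subspace $P\ch$). Assuming $\ga$ is unital (or after adjoining a unit, which is routine), $V\xi_\f = \pi(\idd_\ga)V\xi_\f \in \pi(\ga)V\xi_\f \subset P\ch$.

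Next I would verify the two essential properties. For cyclicity: by the very definition of $P$, one has
$$
\overline{\widetilde\pi(\ga)V\xi_\f} = \overline{\pi(\ga)V\xi_\f} = P\ch,
$$
so $V\xi_\f$ is cyclic for $\widetilde\pi$ on $P\ch$. For the state: for every $a\in\ga$,
\begin{align*}
\big\langle \widetilde\pi(a)V\xi_\f, V\xi_\f\big\rangle_{P\ch}
&= \big\langle \pi(a)V\xi_\f, V\xi_\f\big\rangle_{\ch}
= \big\langle V^*\pi(a)V\xi_\f, \xi_\f\big\rangle_{\ch_\f} \\
&= \big\langle \pi_\f(\F(a))\xi_\f, \xi_\f\big\rangle_{\ch_\f}
= \f(\F(a)) = (\f\circ\F)(a),
\end{align*}
where in the last line I used the defining property of the GNS representation $(\ch_\f,\pi_\f,\xi_\f)$ of $\f$.

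Hence $(P\ch, \widetilde\pi, V\xi_\f)$ is a cyclic $*$-representation implementing the positive functional $\f\circ\F$. By the uniqueness (up to unitary equivalence) of the GNS representation, recalled at the end of Section \ref{gienes}, this triple is unitarily equivalent to $(\ch_{\f\circ\F},\pi_{\f\circ\F},\xi_{\f\circ\F})$, which proves the claim. I do not anticipate a real obstacle here: the only point requiring mild care is that $V\xi_\f$ is actually in $P\ch$, which is immediate if one carefully uses $P\in\pi(\ga)'$ and the definition of the cyclic subspace; everything else is a direct computation using the Stinespring identity $\pi_\f\circ\F = V^*\pi(\cdot)V$.
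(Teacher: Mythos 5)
Your proof is correct and follows essentially the same route as the paper: the key step in both is the computation $\langle P\pi(a)V\xi_\f,V\xi_\f\rangle=\langle V^*\pi(a)V\xi_\f,\xi_\f\rangle=\f(\F(a))$, with the paper leaving the remaining verifications (that $P\pi$ restricts to a cyclic $*$-representation on $P\ch$ with cyclic vector $V\xi_\f$, and uniqueness of GNS) implicit. You merely spell out those routine checks explicitly, which is fine.
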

\begin{proof}
In order to prove the assertion, it is enough to check that, if $a\in\ga$ then $\langle P\pi(a)V\xi_\f,V\xi_\f\rangle=\f\circ\F(a)$. Indeed,
\begin{align*}
\langle P\pi(a)V\xi_\f,V\xi_\f\rangle=&\langle\pi(a)V\xi_\f,V\xi_\f\rangle=\langle V^*\pi(a)V\xi_\f,\xi_\f\rangle\\
=&\langle\pi_\f(\F(a))\xi_\f,\xi_\f\rangle=\f(\F(a))\,.
\end{align*}
\end{proof}
\begin{prop}
Let $\ga$ and $\gb$ be $\bz_2$-graded $C^*$-algebras and $\om\in\cs(\ga)_+$, $\f\in\cs(\gb)$. Consider the product state $\psi_{\om,\f}$ on
$\ga_+\otimes_{\rm max}\gb=\ga_+\, \circled{\rm{{\tiny F}}}\,_{\rm max}\gb$.

The GNS representation of the product state
$\om\times\f\in\cs\big(\ga\, \circled{\rm{{\tiny F}}}\,_{\rm max}\gb\big)$ is given by $\big(P\ch,P\pi,V\xi_{\psi_{\om,\f}}\big)$, where:
\begin{itemize}
\item[(i)] $\big(\ch_{\psi_{\om,\f}},\pi_{\psi_{\om,\f}},\xi_{\psi_{\om,\f}}\big)$ is the GNS representation of $\psi_{\om,\f}$;
\item[(ii)] $(\ch,\pi,V)$ is the Stinespring dilation of the completely positive map
$\pi_{\psi_{\om,\f}}\circ E:\ga\, \circled{\rm{{\tiny F}}}\,_{\rm max}\gb\to\cb\big(\ch_{\psi_{\om,\f}}\big)$,
with $E$ the conditional expectation given in Lemma \ref{leco};
\item[(iii)] $P\in\pi\big(\ga\, \circled{\rm{{\tiny F}}}\,_{\rm max}\gb\big)'$ is the self-adjoint projection onto the cyclic subspace
$\pi\big(\ga\, \circled{\rm{{\tiny F}}}\,_{\rm max}\gb\big)V\xi_{\psi_{\om,\f}}\subset\ch$.
\end{itemize}
\end{prop}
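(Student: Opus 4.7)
The plan is to reduce the statement to a direct application of Lemma \ref{leco1}, after observing that the product state $\om\times\f$ factors through the conditional expectation $E$ of Lemma \ref{leco}. Concretely, I would first verify the factorization identity
\[
\om\times\f = \psi_{\om,\f}\circ E
\]
on the dense subalgebra $\ga\ftp\gb$, and then extend it to the completion. On an elementary tensor $a\ftp b$ with $a\in\ga$ homogeneous and $b\in\gb$, Lemma \ref{leco} gives $E(a\ftp b) = \eps(a)\otimes b$, whence $\psi_{\om,\f}(E(a\ftp b)) = \om(\eps(a))\f(b)$. Because $\om$ is even by hypothesis, one has $\om\circ\eps = \om$: if $a\in\ga_+$ then $\eps(a) = a$, while if $a\in\ga_-$ then $\eps(a) = 0$ and also $\om(a) = 0$. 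Hence $\psi_{\om,\f}(E(a\ftp b)) = \om(a)\f(b) = (\om\times\f)(a\ftp b)$, and linearity together with continuity of $E$ and $\psi_{\om,\f}$ extend this identity to all of $\ga\, \circled{\rm{{\tiny F}}}_{\rm max}\,\gb$.

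Next, I would observe that the composite $\pi_{\psi_{\om,\f}}\circ E : \ga\, \circled{\rm{{\tiny F}}}_{\rm max}\,\gb\to\cb(\ch_{\psi_{\om,\f}})$ is completely positive, being the composition of the completely positive conditional expectation $E$ (Lemma \ref{leco}) with the $*$-representation $\pi_{\psi_{\om,\f}}$. The Stinespring theorem therefore produces a dilation $(\ch,\pi,V)$ as in (ii) of the statement; the projection $P$ onto the cyclic subspace $\pi\bigl(\ga\, \circled{\rm{{\tiny F}}}_{\rm max}\,\gb\bigr)V\xi_{\psi_{\om,\f}}$ is then well defined and lies in $\pi\bigl(\ga\, \circled{\rm{{\tiny F}}}_{\rm max}\,\gb\bigr)'$, as in (iii).

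Finally, I would apply Lemma \ref{leco1} to the completely positive map $\Phi := E$ and the positive linear functional $\psi_{\om,\f}$ on $\ga_+\, \circled{\rm{{\tiny F}}}_{\rm max}\,\gb = \ga_+\otimes_{\rm max}\gb$. The conclusion of that lemma is precisely that the GNS triple of $\psi_{\om,\f}\circ E$ equals $(P\ch, P\pi, V\xi_{\psi_{\om,\f}})$. Combined with the factorization established in the first paragraph, this identifies $(P\ch, P\pi, V\xi_{\psi_{\om,\f}})$ as the GNS triple of $\om\times\f$, proving the proposition.

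The argument is mostly bookkeeping; the single nontrivial point, and the only place where the fermionic structure really enters, is the factorization $\om\times\f = \psi_{\om,\f}\circ E$, which hinges on the evenness of $\om$. This is what allows the passage through the conditional expectation onto $\ga_+\otimes_{\rm max}\gb$ without loss of information, and it is also what explains why the GNS representation of the Fermi product state is genuinely more involved than in the usual tensor-product case: one reaches it indirectly, via the Stinespring dilation of $\pi_{\psi_{\om,\f}}\circ E$, rather than as a direct tensor product of component GNS representations.
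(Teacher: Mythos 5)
Your proof is correct and follows essentially the same route as the paper, whose own argument is simply the observation that $\psi_{\om,\f}\circ E=\om\times\f$ followed by an appeal to Lemmas \ref{leco} and \ref{leco1}. You have merely spelled out the factorization identity (via the evenness of $\om$, which is exactly the point) that the paper asserts without detail.
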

\begin{proof}
As $\psi_{\om,\f}\circ E=\om\times \f$, the proof directly follows from the previous lemmata.
\end{proof}

\section{The diagonal state}
\label{AfdDiag}

In the present section we investigate the Fermi counterpart of the diagonal state defined in \cite{Fid}, which plays a crucial role to define the (fermionic) detailed balance.

We start by recalling some basic facts about the {\it the opposite algebra} $\ga^\circ$ of a
fixed involutive algebra $\ga$. Indeed, $\ga^\circ$ is $\ga$ as a linear space,
with the product $x\circ y:=yx$ and involution
$(x^\circ)^*:=(x^*)^\circ$.

For any map $T:\ga\rightarrow X$ from $\ga$ to the point-set $X$, the {\it opposite map} $T^\circ:\ga^\circ\rightarrow X$ is simply defined as
$$
T^\circ(a^\circ):=T(a)\,,\quad a\in\ga\,.
$$
We note that if $\F:\ga\rightarrow\gb$ is a (completely) positive linear map between involutive algebras, then $\F^\circ$ is also (completely) positive, whereas if $\F$ is a
$*$-homomorphism, then so is $\F^\circ$.

Correspondingly,
if $(\ga,\th)$ is $\bz_2$-graded, then $(\ga^\circ,\th^\circ)$
is also a $\bz_2$-graded involutive algebra in an obvious manner and, for an even linear functional $f:\ga\rightarrow\bc$, the opposite functional $f^\circ$ is also even.

In order to define the diagonal state, we start with an even state $\f\in\cs(\ga)_+$ of the
$\bz_2$-graded $C^*$-algebra $(\ga,\th)$ with central support $s_\f\in Z(\ga^{**})$
in the bidual.\footnote{A state $\f\in\cs(\ga)$ on a $C^*$-algebra $\ga$ has central support if and only
if whenever it is considered as a state on the bidual $W^*$-algebra $\ga^{**}$, the cyclic vector
$\xi_\f\in\ch_\f$ is also separating for $\pi_\f(\ga)''$, see \cite{NSZ}, p.15.}
Since $\f$ is invariant under $\th$ and has central support in the bidual, we can look
at its covariant GNS representation $\big(\ch_\f,\pi_\f,V_{\f,\th},\xi_\f\big)$ and
consider the Tomita conjugation $J_\f$, acting on $\ch_\f$, associated to $\f$. We report
the following well-known facts:
\begin{equation}
\label{ccomf}
\begin{split}
&J_\f\pi_\f(\ga)J_\f=\pi_\f(\ga)^\prime\,, \\
&V_{\f,\th}\pi(x)=\pi(\th(x))V_{\f,\th}\,, \quad x\in\ga \\
&J_\f V_{\f,\th}=V_{\f,\th}J_\f\,.
\end{split}
\end{equation}
Take the $\bz_2$-graded $*$-algebra $\ga\, \circled{\rm{{\tiny F}}}\,\ga^\circ$, together with the functional
$$
\d^o_\f:\ga\, \circled{\rm{{\tiny F}}}\,\ga^\circ\to\bc\,,
$$
and the linear map
$$
\pi_{\d_\f}^o:\ga\, \circled{\rm{{\tiny F}}}\,\ga^\circ\to\cb(\ch_\f)\,,
$$
defined on the generators by
\begin{equation}
\label{fidiag}
\begin{split}
\d^o_\f(a\, \circled{\rm{{\tiny F}}}\,b^\circ):=&\langle\pi_\f(a)\eta_{V_{\f,\th}}\big(J_\f\pi_\f(b^*)J_\f\big)\xi_\f,\xi_\f\rangle\,,\\
\pi_{\d_\f}^o(a\, \circled{\rm{{\tiny F}}}\,b^\circ):=&\pi_\f(a)\eta_{V_{\f,\th}}\big(J_\f\pi_\f(b^*)J_\f\big)\,,\,\,\,a,b\in\ga\,.
\end{split}
\end{equation}
The above maps are obviously well defined by the universal property of
the algebraic tensor product $\ga\odot\ga^\circ=\ga\, \circled{\rm{{\tiny F}}}\,\ga^\circ$. Their main properties are summarised in the following
\begin{thm}
\label{gprdig}
The linear maps $\d^o_\f$ and $\pi_{\d_\f}^o$ respectively extend to a state $\d_\f$,
and to a representation $\pi_{\d_\f}$ of $\ga\, \circled{\rm{{\tiny F}}}_{\rm max}\ga^\circ$ satisfying:
\begin{itemize}
\item[(i)] the GNS representation of $\d_\f$ is $(\ch_\f,\pi_{\d_\f},\xi_\f)$;
\item[(ii)] $\pi_{\d_\f}\big(\ga\, \circled{\rm{{\tiny F}}}_{\rm max}\ga^\circ\big)''$ is the von Neuman algebra
$\pi_\f(\ga)''\bigvee\pi_\f(\ga)^\wr_{V_{\f,\th}}$ generated by $\pi_\f(\ga)$ and its twisted commutant $\pi_\f(\ga)^\wr_{V_{\f,\th}}$.
\end{itemize}
\end{thm}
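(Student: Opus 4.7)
The plan is to use the universal property of the Fermi $C^*$-tensor product, namely Theorem \ref{tk}, to extend $\pi^o_{\d_\f}$ to a genuine $*$-representation, and then define $\d_\f$ as the vector state at $\xi_\f$. First I would introduce the auxiliary map $\rho:\ga^\circ\to\cb(\ch_\f)$ by $\rho(a^\circ):=\eta_{V_{\f,\th}}(J_\f\pi_\f(a^*)J_\f)$, so that $\pi^o_{\d_\f}(a\ftp b^\circ)=\pi_\f(a)\rho(b^\circ)$. One verifies that $\rho$ is a unital $*$-homomorphism: the two order reversals caused by passing to $\ga^\circ$ and by conjugating with the antiunitary $J_\f$ cancel against the adjoint on $\ga$; $\eta_{V_{\f,\th}}$ is a $*$-automorphism of $\cb(\ch_\f)$ by Section \ref{kjw}; and $\rho(\idd_\ga^\circ)=\eta_{V_{\f,\th}}(J_\f^2)=\idd$.

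Next I would check the graded commutation relation required by Theorem \ref{tk}, namely
$$
\pi_\f(a)\rho(b^\circ)=\eeps(a,b^\circ)\,\rho(b^\circ)\pi_\f(a)
$$
for homogeneous $a\in\ga$ and $b^\circ\in\ga^\circ$. Using $J_\f V_{\f,\th}=V_{\f,\th}J_\f$, conjugation by $J_\f$ preserves the grading on $\cb(\ch_\f)$ induced by $\ad_{V_{\f,\th}}$, and since $\eta_{V_{\f,\th}}$ is even, $\rho(b^\circ)$ inherits the grade of $b$ and lies in the twisted commutant $\pi_\f(\ga)^\wr_{V_{\f,\th}}=\eta_{V_{\f,\th}}(\pi_\f(\ga)')$ by Proposition \ref{vKomForm}. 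Writing $\rho(b^\circ)=x_{+}+V_{\f,\th}\,x_{-}$ with $x_{\pm}\in\pi_\f(\ga)'$, and using that $V_{\f,\th}$ commutes with $\pi_\f(a_{+})$ and anticommutes with $\pi_\f(a_{-})$, the required twisted commutation follows by direct case-by-case computation. Theorem \ref{tk} then yields a unique unital $*$-homomorphism $\pi_{\d_\f}:\ga\, \circled{\rm{{\tiny F}}}_{\rm max}\ga^\circ\to\cb(\ch_\f)$ extending $\pi^o_{\d_\f}$.

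Setting $\d_\f(c):=\langle\pi_{\d_\f}(c)\xi_\f,\xi_\f\rangle$ then produces a state extending $\d^o_\f$, and cyclicity of $\xi_\f$ under $\pi_{\d_\f}$ is immediate from $\pi_{\d_\f}(\ga\ftp\idd_\ga^\circ)=\pi_\f(\ga)$, giving (i). For (ii), Theorem \ref{tk} asserts that $\pi_{\d_\f}(\ga\, \circled{\rm{{\tiny F}}}_{\rm max}\ga^\circ)$ is the $C^*$-subalgebra generated by $\pi_\f(\ga)$ and $\rho(\ga^\circ)$; the central-support hypothesis makes $\xi_\f$ separating for $\pi_\f(\ga)''$, so standard Tomita theory together with \eqref{ccomf} gives $J_\f\pi_\f(\ga)''J_\f=\pi_\f(\ga)'$, and applying the normal $*$-automorphism $\eta_{V_{\f,\th}}$ shows that $\rho(\ga^\circ)$ is strongly dense in $\pi_\f(\ga)^\wr_{V_{\f,\th}}$. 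Taking strong closures yields the claimed $\pi_\f(\ga)''\bigvee\pi_\f(\ga)^\wr_{V_{\f,\th}}$.

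The main obstacle is the bookkeeping in the first two steps: verifying that $\rho$ is a genuine $*$-homomorphism of the \emph{opposite} algebra, and checking the graded commutation relation with the correct sign, both require carefully composing several order- and sign-sensitive operations (the opposite product, the involution, antiunitary conjugation by $J_\f$, the twist $\eta_{V_{\f,\th}}$, and the $\bz_2$-grading). Once these are in place, the rest of the proof flows cleanly from Theorem \ref{tk} and standard Tomita theory.
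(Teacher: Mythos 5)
Your proof is correct, but it takes a genuinely different route from the paper's. The paper proves the theorem by brute force: it verifies directly, case by case on homogeneous elements $a,b,c,d$ with the various sign combinations of $\partial(b)$ and $\partial(d)$, that $\pi^o_{\d_\f}$ respects the involution and the product of $\ga\,\circled{\rm{{\tiny F}}}\,\ga^\circ$ (using \eqref{ccomf}), and then observes that a $*$-representation of the algebraic Fermi product automatically extends to $\ga\,\circled{\rm{{\tiny F}}}_{\rm max}\ga^\circ$ by definition of the maximal norm; parts (i) and (ii) are explicitly ``left to the reader.'' You instead factor $\pi^o_{\d_\f}$ as $\pi_\f\cdot\rho$ with $\rho:=\eta_{V_{\f,\th}}\circ\ad_{J_\f}\circ\pi_\f\circ(\cdot)^*$, check that $\rho$ is a unital $*$-homomorphism of the opposite algebra (the two order reversals do cancel, and $\eta_{V_{\f,\th}}=\ad_K$ is a $*$-automorphism), verify the graded commutation relation \eqref{com} (which holds: $J_\f\pi_\f(b^*)J_\f$ is homogeneous of the same grade as $b$ and lies in $\pi_\f(\ga)'$, and the three sign cases work out against $V_{\f,\th}$), and then invoke the universal property Theorem \ref{tk}. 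Your route is more modular and arguably cleaner: it reuses machinery already established in the paper, and it delivers (ii) essentially for free, since Theorem \ref{tk} identifies the image as the $C^*$-algebra generated by $\pi_\f(\ga)$ and $\rho(\ga^\circ)$, whence $\pi_{\d_\f}(\cdot)''=\pi_\f(\ga)''\bigvee\eta_{V_{\f,\th}}(\pi_\f(\ga)')=\pi_\f(\ga)''\bigvee\pi_\f(\ga)^\wr_{V_{\f,\th}}$ by Tomita theory (here the central-support hypothesis is used) and Proposition \ref{vKomForm}. The only caveat is that Theorem \ref{tk} is stated for unital algebras, whereas the paper's direct computation does not need unitality; since the paper elsewhere assumes unitality without loss of generality, this is a cosmetic rather than a substantive restriction, but you should flag it.
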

\begin{proof}
The theorem is proved once we show that $\pi_{\d_\f}^o$ extends to a $*$-representation
of $\ga\, \circled{\rm{{\tiny F}}}_{\rm max}\ga^\circ$, which is equivalent to show
that $\pi_{\d_\f}^o$ is a $*$-representation of $\ga\, \circled{\rm{{\tiny F}}}\,\ga^\circ$.
The remaining parts are left to the reader.

Fix $a,b,c,d\in\ga$ homogeneous, and take into account \eqref{ccomf}.
For the adjoint, if $\partial(b)=1$ we easily get
$$
\pi_{\d_\f}^o\big((a\, \circled{\rm{{\tiny F}}}\,b^\circ)^*\big)=\pi_{\d_\f}^o(a\, \circled{\rm{{\tiny F}}}\,b^\circ)^*
$$
Concerning the remaining cases where $\partial(b)=-1$, we get
\begin{align*}
&\pi_{\d_\f}^o(a\, \circled{\rm{{\tiny F}}}\,b^\circ)^*=\big(\pi_\f(a)(\imath V_{\f,\th}J_\f\pi_\f(b^*)J_\f)\big)^*\\
=&-\imath J_\f\pi_\f(b)J_\f V_{\f,\th} \pi_\f(a^*)
=\imath V_{\f,\th}J_\f\pi_\f(b)J_\f\pi_\f(a^*)\\
=&\imath V_{\f,\th}\pi_\f(a^*)J_\f\pi_\f(b)J_\f
=\partial(a)\imath\pi_\f(a^*)V_{\f,\th}J_\f\pi_\f(b)J_\f\\
=&\partial(a)\pi_\f(a^*)\big(\imath V_{\f,\th}J_\f\pi_\f(b)J_\f\big)
=\pi_{\d_\f}^o\big((a\, \circled{\rm{{\tiny F}}}\,b^\circ)^*\big)\,.
\end{align*}
Concerning the product, if $\partial(b)=1=\partial(d)$ or if $\partial(b)=1$ and $\partial(d)=-1$, we again easily get
\begin{align*}
\pi_{\d_\f}^o\big((a\, \circled{\rm{{\tiny F}}}\,b^\circ)(c\, \circled{\rm{{\tiny F}}}\,d^\circ)\big)
=&\pi_{\d_\f}^o(ac\otimes b^\circ d^\circ)=\pi_{\d_\f}^o(ac\otimes db)\\
=&\pi_\f(a)\pi_\f(c)
J_\f\pi_\f(b^*)J_\f\eta_{V_{\f,\th}}\big(J_\f\pi_\f(d^*)J_\f\big)\\
=&\pi_\f(a)J_\f\pi_\f(b^*)J_\f\pi_\f(c)
\eta_{V_{\f,\th}}\big(J_\f\pi_\f(d^*)J_\f\big) \\
=&\pi_{\d_\f}^o(a\, \circled{\rm{{\tiny F}}}\,b^\circ)\pi_{\d_\f}^o(c\, \circled{\rm{{\tiny F}}}\,d^\circ)\,.
\end{align*}
Let now  $\partial(b)=-1$ and $\partial(d)=1$, we get
\begin{align*}
\pi_{\d_\f}^o(a\, \circled{\rm{{\tiny F}}}\,b^\circ)\pi_{\d_\f}^o(c\, \circled{\rm{{\tiny F}}}\,d^\circ)
=&\pi_\f(a)(\imath V_{\f,\th}J_\f\pi_\f(b^*)J_\f)\pi_\f(c)J_\f\pi_\f(d^*)J_\f\\
=&\partial(c)\pi_\f(ac)(\imath V_{\f,\th}J_\f\pi_\f(b^*d^*)J_\f)\\
=&\pi_{\d_\f}^o\big((a\, \circled{\rm{{\tiny F}}}\,b^\circ)(c\, \circled{\rm{{\tiny F}}}\,d^\circ)\big)\,.
\end{align*}
Finally, with $\partial(b)=-1=\partial(d)$ we get
\begin{align*}
\pi_{\d_\f}^o(a\, \circled{\rm{{\tiny F}}}\,b^\circ)\pi_{\d_\f}^o(c\, \circled{\rm{{\tiny F}}}\,
d^\circ)=&\pi_\f(a)(\imath V_{\f,\th}J_\f\pi_\f(b^*)J_\f)\pi_\f(c)(\imath V_{\f,\th}J_\f\pi_\f(d^*)J_\f)\\
=&\partial(c)\pi_\f(ac)J_\f\pi_\f(b^*d^*)J_\f \\
=&\pi_{\d_\f}^o\big((a\, \circled{\rm{{\tiny F}}}\,b^\circ)(c\, \circled{\rm{{\tiny F}}}\,d^\circ)\big)\,.
\end{align*}
\end{proof}
The state $\d_\f$ above defined is called the {\it diagonal state} associated with $\f$.

Suppose for simplicity that $\ga$ is unital, and look at the marginals of $\d_\f$. Since
$$
\d_\f\lceil_{\ga\, \circled{\rm{{\tiny F}}}_{\max}\,\idd_\ga^\circ}=\f\,,\quad \d_\f\lceil_{\idd_\ga\, \circled{\rm{{\tiny F}}}_{\max}\,\ga^\circ}=\f^\circ\,,
$$
$\d_\f$ can be considered the diagonal state associated to the product state
$\f\times\f^\circ\in\cs\big(\ga\, \circled{\rm{{\tiny F}}}_{\rm max}\ga^\circ\big)$,
in analogy to the classical and the usual tensor product cases, see {\it e.g.} \cite {D, Fid}.

We also note that the diagonal state $\d_\f$ is in general not normal ({\it i.e.} not ``absolutely continuous") w.r.t. the product state $\f\times\f^\circ$, unless
$\pi_{\f\times\f^\circ}\big(\ga\, \circled{\rm{{\tiny F}}}_{\rm max}\ga^\circ\big)''$ is atomic.\footnote{A $W^*$-algebra $\gam$ is said to be {\it atomic} if it is generated by the set of its minimal projections. It turns out to be equivalent to the fact that $\gam$ is a direct sum of type $\ty{I}$ factors.}

To end the present section, we briefly discuss the case when $\gam$ is a $\bz_2$-graded $W^*$-algebra, and $\f\in\cs(\gam)$ a normal even state.

For a pair of $\bz_2$-graded $W^*$-algebras
$(\gam,\a)$ and $(\gn,\b)$, we can define on $\gam\, \circled{\rm{{\tiny F}}}\,\gn$ the
{\it maximal binormal norm} as follows. As before, if $c\in\gam\, \circled{\rm{{\tiny F}}}\,\gn$, we put
\begin{equation*}
[]c[]^{\rm bin}_{\rm max}:=\sup\{\|\pi(c)\|\mid\pi\,\text{is a representation s.t.}\, \pi\lceil_\gam,\,\pi\lceil_\gn\,\text{are normal}\}\,.
\end{equation*}
Obviously,
$$
[]c[]^{\rm bin}_{\rm max}\leq []c[]_{\rm max}\,.
$$
The $C^*$-algebra $\gam\, \circled{\rm{{\tiny F}}}^{\rm bin}_{\rm max}\gn$
is nothing else than the completion of $\gam\, \circled{\rm{{\tiny F}}}\,\gn$ w.r.t. the above norm.\footnote{For the concept of binormal tensor product $\ga\otimes_{\rm min}^{\rm bin}\gb$ of $W^*$-algebras, that is the completion of the algebraic tensor product $\ga\otimes\gb$ of the $W^*$-algebras $\ga$ and $\gb$ w.r.t. the minimal binormal $C^*$-cross norm, see {\it e.g.} \cite{J}.}

For any normal faithful even state $\f$ on the $\bz_2$-graded $W^*$-algebra $(\gam,\th)$,
the diagonal state $\d_\f^o$ extends to a state, denoted with an abuse of notation also with $\d_\f$, on
$\gam\, \circled{\rm{{\tiny F}}}^{\rm bin}_{\rm max}\gam^\circ$ as well. Its GNS representation is described as in Theorem \ref{gprdig}, and
$$
\pi_{\d_\f}\big(\gam\, \circled{\rm{{\tiny F}}}_{\rm max}\gam^\circ\big)''=\pi_{\d_\f}\big(\gam\, \circled{\rm{{\tiny F}}}^{\rm bin}_{\rm max}\gam^\circ\big)''=\pi_\f(\gam)\bigvee \pi_\f(\gam)^\wr_{V_{\f,\th}}\,.
$$
We leave the details to the reader.

\section{Duality}

\label{AfdDualiteit}

This section sets up a duality theory for positive linear maps between von Neumann algebras via the twisted commutants.

Consider two von Neumann algebras $(\mathcal{M}, \ch_{\mu})$ and $(\mathcal{N}, \ch_{\nu})$
with cyclic vectors $\xi_\m$ and $\xi_\n$ respectively, and states \begin{equation*}
\mu (a)=\left\langle a\xi _{\mu },\xi_{\mu }\right\rangle  \quad \nu (b)=\left\langle b\xi_{\nu },\xi_{\nu }\right\rangle\,,
\end{equation*}
for all $a\in \mathcal{M}$ and $b\in \mathcal{N}$. Assume that the algebras are $\bz_2$-graded, \emph{i.e.}
we take $(\mathcal{M},\th_\mu)$ and $(\mathcal{N},\th_\nu)$, and that $\m$ and $\n$ are also even.
Let $\Gamma _{\mu}\in B(\ch_{\mu})$ and $\Gamma _{\nu}\in B(\ch_{\nu})$ be the self-adjoint unitary operators which allow to extend $\th_{\mu}$ and $\th _{\nu}$ to the whole $B(\ch_{\mu})$ and $B(\ch_{\nu})$ by $\g_\mu:=\ad_{\Gamma _{\mu}}$ and $\g_\n:=\ad_{\Gamma _{\nu}}$, respectively.
For the Klein transformations and twisted $*$-automorphisms we use the shorthand notations
$\kappa _{\mu}$, $\eta _{\mu}$ and $\kappa_\nu$, $\eta_\nu$, respectively.

Recall that for any positive linear map $\Psi :\mathcal{M}\rightarrow \mathcal{N}$ such that $\nu \circ \Psi =\mu$,
its dual $\Psi ^{\prime }:\mathcal{N}^{\prime }\rightarrow \mathcal{M}^{\prime }$ is defined via
\begin{equation}
\label{duall}
\left\langle \Psi ^{\prime }(b')a\xi_{\mu },\xi_{\mu }\right\rangle
=\left\langle b'\Psi (a)\xi_{\nu },\xi_{\nu }\right\rangle
\end{equation}
for all $a\in \mathcal{M}$ and $b'\in \mathcal{N}^{\prime}$. The reader is referred to \cite{AC},
Proposition 3.1 and \cite{DS2}, Theorem 2.5 for more details and properties. In what follows we remove the subscripts for the
twisted commutants of $\mathcal{M}$ and $\mathcal{N}$, since no confusion can arise. For $\mu ^{\wr }$ and $\nu ^{\wr}$ defined as in Section \ref{kjw}, one has the following
\begin{proposition}
	\label{verwrBehoud}
For any positive linear map $\Psi:\mathcal{M}\rightarrow \mathcal{N}$ such that $\nu\circ \Psi=\m$,
there exists a unique unital $\Psi ^{\wr}:\mathcal{N}^{\wr}\rightarrow \mathcal{M}^{\wr}$ satisfying
\begin{equation}
\left\langle \Psi ^{\wr}(b^{\wr})a\xi_{\mu },\xi_{\mu }\right\rangle
=\left\langle b^{\wr}\Psi (a)\xi_{\nu },\xi_{\nu }\right\rangle
\label{verwrDuaal}
\end{equation}
for all $a\in \mathcal{M}$ and $b^{\wr}\in \mathcal{N}^{\wr}$. In addition, if $\Psi$ is
unit preserving then $\mu ^{\wr }\circ \Psi ^{\wr }=\nu ^{\wr}$,
and $\Psi^{\wr }$ is faithful.
\end{proposition}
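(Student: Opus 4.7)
The plan is to construct $\Psi^{\wr}$ explicitly by transporting the standard von Neumann algebraic dual $\Psi':\cn'\to\cam'$ (which exists, satisfies \eqref{duall}, and under $\n\circ\Psi=\m$ is unital, by well-known facts as in \cite{AC, DS2}) along the Klein transformations identifying each commutant with the corresponding twisted commutant. Since $\kappa_{\nu}^{2}=\id$ forces $\kappa_{\nu}(\cn^{\wr})=\cn'$, and analogously $\kappa_{\mu}$ maps $\cam'$ bijectively onto $\cam^{\wr}$ (see Definition \ref{verwKom} and Proposition \ref{vKomForm}), the natural candidate is
\[
\Psi^{\wr}:=\kappa_{\mu}\circ\Psi'\circ\kappa_{\nu}:\cn^{\wr}\longrightarrow\cam^{\wr}\,.
\]

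The core computational step is to verify \eqref{verwrDuaal}. Here the decisive ingredient is the state identity \eqref{tauEienskap}, which gives $\m(\kappa_{\mu}(x)y)=\m(xy)$ and, on the other side, $\n(\kappa_{\nu}(x)y)=\n(xy)$. For $b^{\wr}\in\cn^{\wr}$ and $a\in\cam$, a direct chain then produces
\begin{align*}
\langle\Psi^{\wr}(b^{\wr})a\xi_{\mu},\xi_{\mu}\rangle
&=\m\bigl(\kappa_{\mu}(\Psi'(\kappa_{\nu}(b^{\wr})))a\bigr)
=\m\bigl(\Psi'(\kappa_{\nu}(b^{\wr}))a\bigr)\\
&=\n\bigl(\kappa_{\nu}(b^{\wr})\Psi(a)\bigr)
=\n\bigl(b^{\wr}\Psi(a)\bigr)
=\langle b^{\wr}\Psi(a)\xi_{\nu},\xi_{\nu}\rangle\,,
\end{align*}
where the second equality uses \eqref{tauEienskap} for $\m$, the third is the standard duality \eqref{duall} for $\Psi'$, and the fourth applies \eqref{tauEienskap} for $\n$.

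For uniqueness, if $\widetilde{\Psi}:\cn^{\wr}\to\cam^{\wr}$ also satisfies \eqref{verwrDuaal}, then for each $b^{\wr}$ the operator $T:=\widetilde{\Psi}(b^{\wr})-\Psi^{\wr}(b^{\wr})\in\cam^{\wr}$ fulfils $\langle Ta\xi_{\mu},\xi_{\mu}\rangle=0$ for all $a\in\cam$. Taking complex conjugates gives $\langle T^{*}\xi_{\mu},a\xi_{\mu}\rangle=0$; since $\xi_{\mu}$ is cyclic for $\cam$, this yields $T^{*}\xi_{\mu}=0$, and because $\cam^{\wr}$ is a von Neumann algebra (hence $*$-closed) with $\xi_{\mu}$ separating for it by Remark \ref{rem1}(ii), we conclude $T=0$. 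Unitality of $\Psi^{\wr}$ follows because $\Psi'(\idd)=\idd$ (the hypothesis $\n\circ\Psi=\m$ gives $\langle\Psi'(\idd)a\xi_{\mu},\xi_{\mu}\rangle=\langle a\xi_{\mu},\xi_{\mu}\rangle$, and then cyclicity of $\xi_\mu$ for $\cam$ together with separation for $\cam'$ closes the argument) and $\kappa_{\Gamma}$ preserves the identity.

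Finally, when $\Psi(\idd)=\idd$, putting $a=\idd$ in \eqref{verwrDuaal} immediately delivers $\m^{\wr}\circ\Psi^{\wr}=\n^{\wr}$. Faithfulness is then obtained from this state preservation together with the faithfulness of $\n^{\wr}$ on $\cn^{\wr}$: by Remark \ref{rem1}(ii) for $\cn$, $\xi_{\nu}$ is separating for $\cn^{\wr}$, so $\n^{\wr}(b^{\wr})=\|b^{\wr}{}^{1/2}\xi_\nu\|^2=0$ forces $b^{\wr}=0$ for any positive $b^{\wr}$; applied to $b^\wr$ with $\Psi^{\wr}(b^{\wr})=0$, the relation $\n^{\wr}(b^{\wr})=\m^{\wr}(\Psi^{\wr}(b^{\wr}))=0$ yields $b^{\wr}=0$. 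The principal subtlety of the argument lies in the interplay between the Klein transformation $\kappa_{\Gamma}$ (which is not a $*$-automorphism) and the state identity \eqref{tauEienskap}: the asymmetry of $\kappa_{\Gamma}$ is precisely what intertwines commutants with twisted commutants, while \eqref{tauEienskap} absorbs the two occurrences of $\kappa$ when one feeds them into the invariant states $\m$ and $\n$, so that standard duality can be applied cleanly in the middle.
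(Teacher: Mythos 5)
Your proposal is correct and follows essentially the same route as the paper: the same definition $\Psi^{\wr}=\kappa_{\mu}\circ\Psi'\circ\kappa_{\nu}\lceil_{\mathcal{N}^{\wr}}$, the same chain of equalities combining \eqref{tauEienskap} with the standard duality \eqref{duall}, and the same use of cyclicity of $\xi_\mu$ for $\mathcal{M}$ together with Remark \ref{rem1} for uniqueness, state preservation, and faithfulness. The only (harmless) differences are that you spell out the adjoint trick behind uniqueness and derive $\Psi'(\idd)=\idd$ directly instead of citing \cite{DS2}.
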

\begin{proof}
Indeed, let us take
\begin{equation}
\label{verwrDuDef}
\Psi ^{\wr }:=\kappa_{\mu }\circ \Psi^{\prime }\circ \kappa_{\nu}\lceil_{\mathcal{N}^{\wr}}\,.
\end{equation}
Since $\kappa_{\nu }^{-1}(\mathcal{N}^{\prime })=\kappa _{\nu }(\mathcal{N}^{\prime })=\mathcal{N}^{\wr }$, the last equality coming from \eqref{VerwKomItvTau},
$\Psi ^{\wr }: \mathcal{N}^{\wr}\rightarrow \mathcal{M}^{\wr}$. Thus, condition \eqref{verwrDuaal} follows
from (\ref{tauEienskap}) and \eqref{duall}. Namely, for all $a\in \mathcal{M}$ and $b^{\wr}\in \mathcal{N}^{\wr}$
	\begin{align*}
	\left\langle \Psi ^{\wr }(b^{\wr})a\xi_{\mu },\xi_{\mu }\right\rangle &
	=\left\langle \Psi ^{\prime }(\kappa _{\nu }(b^{\wr}))a\xi_{\mu },\xi
	_{\mu }\right\rangle \\
&=\left\langle \kappa _{\nu }(b^{\wr})\Psi (a)\xi_{\nu
	},\xi_{\nu }\right\rangle  \\
	& =\left\langle b^{\wr}\Psi (a)\xi_{\nu },\xi_{\nu }\right\rangle\,.
	\end{align*}
 Note that \eqref{verwrDuaal} uniquely determines $\Psi ^{\wr }$ as $\xi_{\mu }$ is cyclic for $\mathcal{M}$,
 and separating for $\mathcal{M}^{\wr }$ by Remark \ref{rem1}. Moreover, Theorem 2.5 in \cite{DS2} gives that $\Psi^\prime$ is unital, and then \eqref{verwrDuDef} entails $\Psi ^{\wr}$ is identity preserving. Assuming that $\Psi(\idd_{\mathcal{M}})=\idd_{\mathcal{N}}$, we get from \eqref{verwrDuaal}
	\begin{equation*}
	(\mu ^{\wr }\circ \Psi ^{\wr })(b^{\wr})=\left\langle \Psi ^{\wr }(b^{\wr})\xi
	_{\mu },\xi_{\mu }\right\rangle =\left\langle b^{\wr} \Psi(\idd_{\mathcal{M}})\xi_{\nu
	},\xi_{\nu }\right\rangle =\nu ^{\wr }(b^{\wr})
	\end{equation*}
	for all $b^{\wr}\in \mathcal{N}^{\wr }$. As a consequence, if $\Psi ^{\wr }((b^{\wr})^{\ast}b^{\wr})=0$ one has $\nu ^{\wr }((b^{\wr})^{\ast }b^{\wr})=0$. Therefore $b^{\wr}=0$, since $\xi _{\nu}$ is separating for $\mathcal{N}^{\wr }$.
\end{proof}
We call $\Psi ^{\wr}$ the \emph{twisted dual} of $\Psi$. Achieving positivity of the twisted dual needs a further assumption on $\Psi$, namely that it has to be even,
as shown in the next result.
\begin{theorem}
	\label{alfaGamma+}
Assume that $\Psi $ is grading-equivariant, \emph{i.e.} $\Psi \circ \g _{\mu }=\g_{\nu }\circ \Psi$, and $\nu\circ \Psi=\m$. Then $\Psi ^{\wr }$ as defined in (\ref{verwrDuDef}) is
	positive, even, with unit norm, and is given by
	\begin{equation*}
	\Psi ^{\wr }=\eta _{\mu}\circ \Psi ^{\prime }\circ\eta_{\nu}^{-1}\lceil_{\mathcal{N}^{\wr}}=
\eta_{\m}^{-1}\circ \Psi ^{\prime }\circ \eta _{\nu}\lceil_{\mathcal{N}^{\wr}}\,.
	\end{equation*}
	In addition,
	
	(a) if $\Psi$ is $n$-positive for some $n\in\bn$, then so is $\Psi^{\wr}$;
	
	(b) if $\Psi$ is completely positive, then so is $\Psi^{\wr}$;
	
	(c) if $\Psi $ is unit preserving, while $\xi_{\mu }$ and $\xi_{\nu }$ are
	separating for $\mathcal{M}$ and $\mathcal{N}$ respectively, then
	\begin{equation*}
	\Psi ^{\wr \wr }:=(\Psi ^{\wr })^{\wr }:
	\mathcal{M}\rightarrow \mathcal{N}
	\end{equation*}
	is well-defined and
	\begin{equation*}
	\Psi ^{\wr \wr }=\Psi .
	\end{equation*}
\end{theorem}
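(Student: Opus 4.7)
The plan is to establish the two alternative composition formulas first, read off positivity, evenness, and norm from them, and finish with the involutivity statement (c). The crucial preliminary is that grading-equivariance of $\Psi$ transfers to its classical dual $\Psi'$: starting from the defining relation \eqref{duall} and using $\Gamma_{\mu}\xi_{\mu}=\xi_{\mu}$, $\Gamma_{\nu}\xi_{\nu}=\xi_{\nu}$ together with $\Psi\circ\gamma_{\mu}=\gamma_{\nu}\circ\Psi$, a short direct calculation yields $\Psi'\circ\gamma_{\nu}=\gamma_{\mu}\circ\Psi'$; equivalently, $\Psi'$ commutes with the auxiliary bijections $\widetilde{\varepsilon}_{\mu}$ and $\widetilde{\varepsilon}_{\nu}$ of \eqref{epsilon}. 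Inserting $\eta=\widetilde{\varepsilon}\circ\kappa=\kappa\circ\widetilde{\varepsilon}$ from \eqref{kap&tau} into the definition $\Psi^{\wr}=\kappa_{\mu}\circ\Psi'\circ\kappa_{\nu}\lceil_{\mathcal{N}^{\wr}}$ of \eqref{verwrDuDef}, and cancelling $\widetilde{\varepsilon}$-factors through $\Psi'$, I obtain the two claimed expressions $\eta_{\mu}\circ\Psi'\circ\eta_{\nu}^{-1}\lceil_{\mathcal{N}^{\wr}}=\eta_{\mu}^{-1}\circ\Psi'\circ\eta_{\nu}\lceil_{\mathcal{N}^{\wr}}$.

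This formula presents $\Psi^{\wr}$ as $\Psi'$ conjugated by the $*$-automorphisms $\eta_{\mu}$ and $\eta_{\nu}^{-1}$. Since $*$-automorphisms are completely positive and isometric, bare positivity together with parts (a) and (b) reduce to the known inheritance of $\Psi'$ from $\Psi$ (see \cite{AC}, Proposition 3.1 and \cite{DS2}, Theorem 2.5). Evenness of $\Psi^{\wr}$ is immediate from the original expression $\kappa_{\mu}\circ\Psi'\circ\kappa_{\nu}$, since $\kappa_{\mu},\kappa_{\nu}$ commute with their grading automorphisms by a direct check on the definition of $\kappa$, and $\Psi'$ is even by the preliminary above. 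Unit norm is then a standard consequence of unitality (already established in Proposition \ref{verwrBehoud}) together with positivity.

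For (c), the separating hypothesis combined with Remark \ref{rem1} makes $\xi_{\mu}$ (resp.\ $\xi_{\nu}$) cyclic and separating for $\mathcal{M}^{\wr}$ (resp.\ $\mathcal{N}^{\wr}$), so Proposition \ref{verwrBehoud} applies to the even, unital, positive map $\Psi^{\wr}$ and $\Psi^{\wr\wr}$ is defined; Proposition \ref{gam'} identifies $\mathcal{M}^{\wr\wr}$ with $\mathcal{M}$. Applying the formula proven above to $\Psi^{\wr}$ (with the roles of source and target states exchanged), $\Psi^{\wr\wr}=\eta_{\nu}\circ(\Psi^{\wr})'\circ\eta_{\mu}^{-1}\lceil_{\mathcal{M}^{\wr\wr}}$. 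To evaluate $(\Psi^{\wr})'$ I apply the contravariance of the dual to $\Psi^{\wr}=\eta_{\mu}\circ\Psi'\circ\eta_{\nu}^{-1}$: each $\eta$ is implemented by the unitary $K$ of \eqref{Kdef} which fixes the corresponding cyclic vector (because $\Gamma\xi=\xi$), so the dual of each such state-preserving $*$-isomorphism equals its own inverse; combined with the classical relation $(\Psi')'=\Psi$, this yields $(\Psi^{\wr})'=\eta_{\nu}\circ\Psi\circ\eta_{\mu}^{-1}$. Substituting and using $\eta^{2}=\gamma=\ad_{\Gamma}$, $\gamma^{2}=\id$, and grading-equivariance of $\Psi$,
\[
\Psi^{\wr\wr}=\eta_{\nu}^{2}\circ\Psi\circ\eta_{\mu}^{-2}\lceil_{\mathcal{M}}=\gamma_{\nu}\circ\Psi\circ\gamma_{\mu}=\Psi.
\]

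The main obstacle will be the bookkeeping throughout (c): carefully aligning the various nested commutants and twisted commutants---$(\mathcal{M}^{\wr})'=\eta_{\mu}(\mathcal{M})$ from Proposition \ref{vKomForm}, $\mathcal{M}^{\wr\wr}=\mathcal{M}$ from Proposition \ref{gam'}, and their analogues for $\mathcal{N}$---so that each segment of the composition defining $(\Psi^{\wr})'$ actually lies in the correct algebra and the contravariance identities can be invoked cleanly. Once the algebras line up, the remaining computations are purely formal manipulations.
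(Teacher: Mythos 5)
Your proposal is correct. The derivation of the two composition formulas (via $\Psi'\circ\g_\nu=\g_\mu\circ\Psi'$, hence $\Psi'\circ\widetilde{\varepsilon}_\nu=\widetilde{\varepsilon}_\mu\circ\Psi'$, then factoring $\eta=\widetilde{\varepsilon}\circ\kappa$ through \eqref{verwrDuDef}) and the resulting positivity, evenness, unit norm, and parts (a)--(b) coincide with the paper's argument. Where you genuinely diverge is part (c): the paper proves $\Psi^{\wr\wr}=\Psi$ by a direct sesquilinear computation, namely
$\langle\Psi^{\wr\wr}(a)b^{\wr}\xi_\nu,\xi_\nu\rangle=\langle a\Psi^{\wr}(b^{\wr})\xi_\mu,\xi_\mu\rangle=\overline{\langle\Psi^{\wr}((b^{\wr})^*)a^*\xi_\mu,\xi_\mu\rangle}=\overline{\langle(b^{\wr})^*\Psi(a^*)\xi_\nu,\xi_\nu\rangle}=\langle\Psi(a)b^{\wr}\xi_\nu,\xi_\nu\rangle$,
and then invokes that $\xi_\nu$ is separating for $\mathcal{N}$ and cyclic for $\mathcal{N}^{\wr}$. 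You instead iterate the composition formula, which requires three extra (but standard and correct) inputs: contravariance of the commutant dual, the biduality $(\Psi')'=\Psi$ under the cyclic/separating hypotheses, and the identification of the dual of each state-preserving $\eta$ (implemented by $K$ with $K\xi=\xi$) with its inverse; the identity then collapses via $\eta^2=\ad_\G$ and grading-equivariance. Both routes are sound. The paper's is more self-contained, avoiding any appeal to biduality of the ordinary dual; yours is more structural and makes visible exactly why the double twisted dual closes up (the two Klein twists compose to the grading automorphism, which is absorbed by equivariance). Your bookkeeping of the algebras $(\mathcal{M}^{\wr})'=\eta_\mu(\mathcal{M})$, $\mathcal{M}^{\wr\wr}=\mathcal{M}$, and the corresponding statements for $\mathcal{N}$, is exactly what is needed for the contravariance identities to apply, and it checks out.
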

\begin{proof}
Recall that $\G_\mu\xi_\mu=\xi_\mu$, $\G_\nu\xi_\nu=\xi_\nu$, and $\g_{\mu }(\mathcal{M}^{\prime })=\mathcal{M}^{\prime }$, $\g_{\n}(\mathcal{N}^{\prime })=\mathcal{N}^{\prime }$.
 Then, since $\g_\nu\circ \Psi=\Psi\circ \g_\mu$, for $a\in \mathcal{M}$ and $b'\in \mathcal{N}^{\prime }$
we get from \eqref{duall} that
\begin{align*}
	\left\langle \Psi ^{\prime }(\g _{\nu }(b'))a\xi_{\mu },\xi
	_{\mu }\right\rangle & =\left\langle \Gamma _{\nu
	}b'\Gamma _{\nu
	}\Psi(a)\xi_{\nu },\xi_{\nu }\right\rangle
=\left\langle b'\Gamma _{\nu
	}\Psi (a)\Gamma _{\nu }\xi_{\nu },\xi_{\nu }\right\rangle \\
	&=\left\langle b'\g _{\nu }(\Psi(a))\xi_{\nu },\xi_{\nu
	}\right\rangle =\left\langle b'\Psi(\g _{\mu }(a))\xi_{\nu
	},\xi_{\nu }\right\rangle \\
	&=\left\langle \g _{\mu }(\Psi ^{\prime }(b'))a\xi_{\mu },\xi_{\mu }\right\rangle\,.
	\end{align*}
As $\xi_{\mu }$ is cyclic for $\mathcal{M}$, and therefore separating for $\mathcal{M}^{\prime}$, it follows that
	\begin{equation*}
	\Psi ^{\prime }\circ \g _{\nu }=\g _{\mu }\circ \Psi ^{\prime }\,.
	\end{equation*}
Since both $\kappa_{\nu}$ and $\kappa _{\m}$ are even, from \eqref{verwrDuDef} one finds $\Psi ^{\wr }$ is grading-equivariant, \emph{i.e.}
$$
\Psi ^{\wr }\circ \g_{\nu }=\g_{\mu
	}\circ \Psi ^{\wr }\,.
$$
It also follows that
	\begin{equation}
\label{3circ}
	\Psi ^{\prime }\circ \widetilde{\varepsilon} _{\nu }=\widetilde{\varepsilon} _{\mu }\circ \Psi
	^{\prime }\,,
	\end{equation}
	for $\widetilde{\varepsilon} _{\mu }$ and $\widetilde{\varepsilon} _{\nu }$ defined as in
	(\ref{epsilon}). As $k_\nu=k_\nu^{-1}$, by \eqref{kap&tau} and \eqref{3circ} one has
	\begin{align*}
	\Psi ^{\wr }& =\widetilde{\varepsilon} _{\mu }\circ \widetilde{\varepsilon} _{\mu }^{-1}\circ
	\kappa _{\mu}\circ \Psi ^{\prime }\circ \kappa _{\nu}\lceil_{\mathcal{N}^{\wr}} \\
    &=\widetilde{\varepsilon} _{\mu
	}\circ \kappa _{\mu}\circ \Psi ^{\prime }\circ \widetilde{\varepsilon} _{\nu
	}^{-1}\circ \kappa _{\nu }^{-1}\lceil_{\mathcal{N}^{\wr}} \\
	& =\eta_{\mu}\circ \Psi ^{\prime }\circ \eta _{\nu }^{-1}\lceil_{\mathcal{N}^{\wr}}\,.
	\end{align*}
	Similarly, $\Psi ^{\wr }=\eta _{\mu }^{-1}\circ \Psi ^{\prime }\circ
	\eta _{\nu }\lceil_{\mathcal{N}^{\wr}}$. As $\eta _{\mu }$ and $\eta _{\nu }$ are $\ast $-automorphisms, the
	positivity of $\Psi ^{\wr }$ follows from that of $\Psi ^{\prime }$.
	Likewise, $
	\left\| \Psi ^{\prime }\right\| =1$ implies $\left\| \Psi ^{\wr }\right\| =1$.
	
	Arguing as above, the $n$-positivity and complete positivity of $\Psi^{\wr}$ follow from the corresponding properties of $\Psi^{\prime}$,
	which in turn are implied from that of $\Psi$, by \cite{AC}, Proposition 3.1. This proves (a) and (b).
	
	Lastly, for (c) since $\Psi (\idd_{\mathcal{M}})=\idd_{\mathcal{N}}$, by Proposition \ref{verwrBehoud} we have $\mu ^{\wr }\circ \Psi
	^{\wr }=\nu ^{\wr }$. Furthermore, Remark \ref{rem1} ensures that
$\xi_{\mu }$ and $\xi _{\nu }$ are cyclic for the von Neumann algebras $
	\mathcal{M}^{\wr }$ and $\mathcal{N}^{\wr }$, respectively. Reasoning as in the case of $\Psi^{\wr}$, one sees that
	\begin{equation*}
	\Psi ^{\wr \wr }=\kappa _{\nu }\circ (\Psi ^{\wr})^{\prime }\circ
	\kappa _{\mu }\lceil_{\mathcal{M}}
	\end{equation*}
is indeed a map from $\mathcal{M}$ to $\mathcal{N}$, as a consequence of (\ref{VerwKomItvTau}). Moreover, it is well-defined as we assumed
	that $\Psi $ is positive, and $\nu \circ \Psi =\mu$.
	
	Since $\Psi ^{\wr }$ and $\Psi$ are positive, for any $a\in \mathcal{M}$ and $b^{\wr}\in
	\mathcal{N}^{\wr }$, by \eqref{verwrDuaal} one finds
	\begin{align*}
	\left\langle \Psi ^{\wr \wr }(a)b^{\wr}\xi_{\nu },\xi_{\nu
	}\right\rangle & =\left\langle a\Psi ^{\wr }(b^{\wr})\xi _{\mu },\xi_{\mu
	}\right\rangle =\overline{\left\langle \Psi ^{\wr }((b^{\wr})^{\ast })a^{\ast
		}\xi_{\mu },\xi_{\mu }\right\rangle } \\
	& =\overline{\left\langle (b^{\wr})^{\ast }\Psi (a^{\ast })\xi_{\nu },\xi_{\nu }\right\rangle }=\left\langle \Psi (a)b^{\wr}\xi_{\nu },\xi_{\nu
	}\right\rangle .
	\end{align*}
	Therefore $\Psi ^{\wr \wr }(a)=\Psi (a)$, since $\xi_{\nu }$ is separating for $\mathcal{N}$, and cyclic for $\mathcal{N}^{\wr }$.
\end{proof}
Similar to the proof above, one sees that when $\Psi ^{\wr}$ is positive, (\ref{verwrDuaal}) can also be expressed as
\begin{equation}
\left\langle a\Psi ^{\wr}(b^{\wr})\xi_{\mu },\xi_{\mu }\right\rangle
=\left\langle \Psi (a)b^{\wr}\xi_{\nu },\xi_{\nu }\right\rangle
\label{linkerduaal}
\end{equation}
for all $a\in \mathcal{M}$ and $b^{\wr}\in \mathcal{N}^{\wr }$. Thus, when $\Psi$ is grading-equivariant a duality in terms of the
	twisted commutants can be given in precise analogy to duality in terms of the commutants.
\begin{remark}
We notice that for $\lambda=\m,\nu$, our convention  to use $\kappa_\l$ rather than
	$\kappa_\l\circ\g_\l$ (see \eqref{linktau}) as the Klein transformation
has no effect on Theorem \ref{alfaGamma+}. Indeed, in the latter case one would exploit the form (\ref{linkerduaal}) for the twisted
	dual of $\Psi$, rather than (\ref{verwrDuaal}). It can be easily checked
that in the proof of Proposition \ref{verwrBehoud}, instead of (\ref{tauEienskap}) we would then use the property
	\begin{equation*}
	\left\langle a\kappa_\l(\g_\l(b))\xi ,\xi \right\rangle
	=\left\langle ab\xi ,\xi \right\rangle
	\end{equation*}
	for $a,b\in \cb(\ch)$, thus obtaining the same twisted dual and the same
results in Theorem \ref{alfaGamma+} for both the Klein transformation conventions.
\end{remark}

\section{The lattice}

\label{AfdTralie}

In this section, as an application of the theory developed above, we solve,
even in a more general form, a problem from \cite{Dfer} which we briefly describe for the convenience of the reader.

Let $h$ be a finite dimensional or separable infinite dimensional (complex)
Hilbert space, describing the space for a single fermion particle.
Denote the resulting Fermi (or anti-symmetric) Fock space as $\ch$, with $\langle \cdot,\cdot\rangle$ as inner product,
linear in the first variable, and $f_{\varnothing }$ as
the vacuum vector. The linear space $\ch$ is nothing else than the Fock representation of $\carf(L)$
introduced in Section \ref{AfdZ2}, when $L$ is finite (\emph{i.e.} $h=\bc^{|L|}$) or $L\sim\bn$ (\emph{i.e.} $h=\ell^2(\bn)$),
and we refer the reader to \cite{BR2}, Section 5.2 for a treatment of the matter,
and for the description of the basic operators on it as well. The latter ones are indeed the creation and annihilation operators denoted by
$a^{\dagger}(x)$, and $a(x)$ for $x\in h$, respectively. They have unit norm on $\ch$, are mutually adjoint, and satisfy the anticommutation relations
\begin{equation}
\begin{split}
\label{c0}
&\{a(x),a(y)\}=0 \\
&\{a^{\dagger}(x),a(y)\}=\left\langle x,y\right\rangle \idd_{\ch}
\end{split}
\end{equation}
for all $x,y\in h$.

The lattice $L$ indexes an orthonormal basis for
$h$, say $(e_{l})_{l\in L}$, and we use the notation
\begin{equation*}
a_{l}:=a(e_{l})\,, \quad a_{l}^{\dagger}:=a^{\dagger}(e_{l})\,, \quad l\in L\,.
\end{equation*}
After recalling that $a_l f_\varnothing =0$ for all $l\in L$, we also write
\begin{equation}
f_{(l_{1},\ldots,l_{n})}:=a_{l_{1}}^{\dagger}...a_{l_{n}}^{\dagger}f_{\varnothing}
\label{f}
\end{equation}
for all $l_{1},\ldots,l_{n}\in L$, and $n\geq 1$.

For any subset $I$ of $L$, let $D_{I}$ be a set of finite sequences
$(l_{1},...,l_{n})$ in $I$, for $n=0,1,2,3,...$, with $l_{j}\neq l_{k}$ when
$j\neq k$, such that each finite subset of $I$ corresponds to exactly one
element of $D_{I}$. The empty subset of $I$ corresponds to the sequence with
$n=0$, which is denoted by $\emptyset\in D_{I}$. Note that $D_{I}$ is countable
when $I$ is infinite.
For $s=(l_{1},\ldots,l_{n})$ and $t=(k_{1},\ldots,k_{m})$ in $D_I$ such that $l_i\neq k_h$
for any $i,h$, one denotes $st:=(l_{1},\ldots,l_{n},k_{1},\ldots,k_{m})\in D_I$. Here,
for $t=\emptyset$ or $s=\emptyset$ one has $st=s$ or $st=t$, respectively.
Note that $(f_s)_{s\in D_L}$ is an orthonormal basis for $\ch$, where $f_{s}$ for $s=(l_{1},\ldots,l_{n})\in D_{L}$ is given by (\ref{f}).

Let $(\mathcal{A}(I),\ch)$ be the von Neumann algebra generated by $\{a_{l}:l\in I\}$,
and consider a set of probability outcomes $(p_{s})_{s\in D_{I}}$, \emph{i.e.} $p_{s}\geq 0$, and $\sum_{s\in D_{I}}p_{s}=1$.
In addition, let $\iota:I\rightarrow \iota(I)\subseteq L$ be a bijection such that $I\cap \iota(I)= \emptyset$.

In what follows we consider two states. First, $\tr(\rho _{I}\,\cdot)\in \cs(\mathcal{A}(I))$ such that $\rho _{I}$ is the diagonal density matrix
\begin{equation}
\rho _{I}=\sum_{s\in D_{I}}p_{s}f_{s}\Join f_{s}\,,  \label{roI}
\end{equation}
where $x\Join y\in \cb(\ch)$ is defined as
\begin{equation*}
(x\Join y)z:=\left\langle z,y\right\rangle x
\end{equation*}
for all $x,y,z\in\ch$. To obtain the second state, we introduce
the so-called fermionic entangled vector
\begin{equation}
\zeta:=\sum_{s\in D_{I}}p_{s}^{1/2}f_{s\iota (s)}\in\ch  \label{diag}
\end{equation}
where $\iota (s)=(\iota (l_{1}),...,\iota (l_{n}))$ if $s=(l_{1},...,l_{n})$.
The fermionic entangled pure state $\varphi\in \cs(\mathcal{A}(I\cup \iota (I)))$ is defined by
\begin{equation*}
\varphi (a):=\left\langle a\zeta,\zeta \right\rangle
\end{equation*}
for all $a\in \mathcal{A}(I\cup \iota (I))$. It is straightforward to show that
$\varphi\lceil_{\mathcal{A}(I)}=\tr(\rho _{I}\,\cdot)$,
and $\varphi\lceil_{\mathcal{A}(I)}=\tr(\rho _{\iota(I)}\,\cdot)$. Moreover, $\varphi $ leads to the following bilinear form
\begin{equation*}
\mathcal{A}(I)\times \mathcal{A}(\iota (I))\ni (a,b)\mapsto B_{\varphi}(a,b):=\varphi (ab)\in \bc\,.
\end{equation*}
Assuming that $I$ is finite and $p_{s}>0$ for all $s\in D_{I}$, in
\cite{Dfer}, Theorem 7.3, it was shown that any linear map
\begin{equation*}
\Psi :\mathcal{A}(I)\rightarrow \mathcal{A}(I).
\end{equation*}
has a unique (necessarily linear) \emph{fermionic dual} map
\begin{equation*}
\Psi ^{\varphi }:\mathcal{A}(\iota (I))\rightarrow \mathcal{A}(\iota (I))
\end{equation*}
such that for all $a\in \mathcal{A}(I)$ and $b\in \mathcal{A}(\iota (I))$
\begin{equation*}
B_{\varphi }(\Psi (a),b)=B_{\varphi }(a,\Psi ^{\varphi }(b))\,.
\end{equation*}
In the same paper, after showing that $B_{\varphi}(a,b)$ need not be positive when $a$ and $b$ are,
the question was raised whether there are assumptions ensuring that $\Psi^{\varphi}$ inherits positivity,
$n$-positivity or completely positivity from $\Psi$. Here, we solve this problem even in the more general
case of countable $I$.

The problem is not affected when $L$ is replaced by $I \cup  \iota(I)$,
since in this case we restrict $\ch$ to the Hilbert space spanned by $\{e_{l} \mid l\in I\cup \iota(I)\}$,
say $\ch_{I\cup  \iota(I)}$, which gives a faithful representation of
$\mathcal{A}(I\cup \iota(I))$. Thus, without loss of generality,
we can work in terms of
\begin{equation}
\iota :I\rightarrow L\backslash I\,.  \label{komplBij}
\end{equation}
To reach our goal we apply a result stated in \cite{BJL}, where
the twisted duality of the CAR algebra is expressed in terms of the so-called self-dual
approach due to Araki (see \cite{A1} and \cite{A2}, as well as the review
\cite{A1987}), rather than in the usual terms of creation and annihilation
operators. In the next lines we briefly remind the reader
how to connect the two formulations.

Consider the Hilbert space
\begin{equation*}
h^{2}:=h\oplus h
\end{equation*}
with orthonormal basis given by the vectors $(e_{k},e_{l})_{k,l\in L}$.
We define an anti-unitary operator $C:h^{2}\rightarrow h^{2}$ by
\begin{equation*}
C(e_{k},e_{l})=(e_{l},e_{k})
\end{equation*}
for all $k,l\in L$. Let $E$ be the the so-called basis projection, \emph{i.e.} the projection of $h^{2}$ onto $h\oplus 0$. Then one has
\begin{equation*}
E+CEC=\idd_{h^{2}}\,.
\end{equation*}
The basic operator in the self-dual approach is
\begin{equation*}
c(z):=a^{\dagger}(ECz)+a(Ez)\in \cb(\ch)
\end{equation*}
for $z\in h^{2}$. Here, an element $(x,0)$ of $h^{2}$ is identified with the
element $x$ of $h$, \emph{i.e.} we set $a(x,0):=a(x)$ for all $x\in h$.
Note further that for all $l\in L$, $c(e_{l},0)=a_{l}$ and $c(0,e_{l})=a_{l}^{\dagger}$.
Finally, $c(z)^{\ast }=c(Cz)$ for any $z\in h^{2}$.

For any closed $C$-invariant subspace $Z$ of $h^{2}$, in \cite{BJL} the authors
consider the von Neumann algebra $(\mathcal{M}(Z), \ch)$, where
\begin{equation*}
\mathcal{M}(Z):=\{c(z) \mid z\in Z\}^{\prime \prime}\,.
\end{equation*}
Their main result is
\begin{equation}
\mathcal{M}(Z)^{\prime }=K\mathcal{M}(Z^{\perp })K^{\ast }\,,  \label{BJL-dua}
\end{equation}
where $K:\ch\rightarrow\ch$ is the unitary operator defined by
\begin{equation}
Kf_{s}=\left\{
\begin{array}{cc}
f_{s}&\text{ \ if }s\text{ has even length}\,,\\
-if_{s}&\text{ \ if }s\text{ has odd length}\,,
\end{array}
\right.  \label{tralK}
\end{equation}
for all $s\in D_{L}$ (see \cite{BJL}, Sections II.B and VII).

We now specialise to
\begin{equation*}
Z:=\overline{\spa(\{(e_{k},0):k\in I\}\cup \{(0,e_{l}):l\in I\})}\,,
\end{equation*}
where one finds
\begin{equation}
\mathcal{M}(Z)=\mathcal{A}(I)\,.  \label{MvsA}
\end{equation}
Furthermore, $\mu _{I}\in \cs(\mathcal{A}(I))$ defined by $\mu _{I}(a):=\left\langle a\zeta ,\zeta \right\rangle$
for all $a\in \mathcal{A}(I)$, is the state determined by the density matrix $\rho _{I}$ in (\ref{roI}).

In the remainder of this section we assume that for any $s\in D_I$
$$
p_s>0\,.
$$

The following proposition is crucial in order to apply our abstract results to the fermionic entangled state. When $I$ is finite, the statement was proved in \cite{Dfer}, Proposition 7.6 (ii).
\begin{proposition}
	\label{FiIsSik}
	The fermionic entangled vector $\zeta$ in (\ref{diag}) is cyclic for $\mathcal{A}(I)$ in $\ch$.
\end{proposition}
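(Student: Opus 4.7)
The plan is to show that every element of the orthonormal basis $(f_r)_{r\in D_L}$ of $\ch$ lies in $\mathcal{A}(I)\zeta$. By~\eqref{komplBij}, $L=I\cup\iota(I)$ is a disjoint union, so every $r\in D_L$ can be reordered as a concatenation $t\iota(u)$ for uniquely determined $t,u\in D_I$, and this reordering only changes $f_r$ by a sign. Hence it suffices to produce $f_{t\iota(u)}\in\mathcal{A}(I)\zeta$ for every pair $(t,u)\in D_I\times D_I$.

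The first step is, for each $u\in D_I$, to construct the projection $P_u\in\mathcal{A}(I)$ onto the closed subspace of vectors whose $I$-occupation is exactly the set of entries of $u$. For any finite $F\subset I$ containing the entries of $u$, the finite product
\[
P_u^F:=\prod_{l\in u}a_l^\dagger a_l\cdot\prod_{l\in F\setminus u}a_l a_l^\dagger
\]
is a projection in $\mathcal{A}(I)$ onto those vectors whose $F$-occupation equals $u$. The net $\{P_u^F\}$, indexed by the directed family of such $F$, is monotonically decreasing and converges in the strong operator topology to $P_u$; since $\mathcal{A}(I)$ is a von Neumann algebra, $P_u\in \mathcal{A}(I)$. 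Now $P_u$ annihilates every summand of $\zeta$ except the one with $s=u$, so $P_u\zeta=p_u^{1/2}f_{u\iota(u)}$, and as $p_u>0$ we conclude $f_{u\iota(u)}\in\mathcal{A}(I)\zeta$ for every $u\in D_I$.

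The second step is to move the $I$-part from $u$ to an arbitrary $t\in D_I$. Writing $t=(t_1,\ldots,t_m)$ and $u=(u_1,\ldots,u_n)$, the monomial
\[
A_{tu}:=a^\dagger_{t_1}\cdots a^\dagger_{t_m}\,a_{u_n}\cdots a_{u_1}
\]
belongs to $\mathcal{A}(I)$, and a direct application of~\eqref{c0}, exploiting that each $I$-indexed factor anticommutes with every $a^\dagger_{\iota(u_i)}$ because $I\cap\iota(I)=\emptyset$, shows that $A_{tu}f_{u\iota(u)}=f_{t\iota(u)}$: the annihilators successively strip the $u$-modes off $f_{u\iota(u)}$ leaving $f_{\iota(u)}$, and the creators then build $f_{t\iota(u)}$. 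Consequently $A_{tu}P_u\zeta=p_u^{1/2}f_{t\iota(u)}$, placing $f_{t\iota(u)}$ in $\mathcal{A}(I)\zeta$. Varying $t$ and $u$, every basis vector of $\ch$ lies in $\mathcal{A}(I)\zeta$, so $\mathcal{A}(I)\zeta$ is dense in $\ch$ and $\zeta$ is cyclic. The main subtlety is the existence of $P_u$ when $I$ is infinite, which requires invoking the von Neumann-algebra closure of $\mathcal{A}(I)$ to pass to SOT-limits of projections from finite-dimensional CAR subalgebras; the algebraic manipulations in the second step are routine CAR computations.
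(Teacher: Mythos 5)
Your proof is correct, and it takes a genuinely different route from the paper's. The paper never forms the occupation projections $P_u$: it works only with finite monomials in the $a_l,a_l^{\dagger}$ (elements of the local $*$-algebra) and runs an $\varepsilon$-approximation. Concretely, for fixed $s,t\in D_I$ it applies $a_{(t)}$ to $\zeta$ to kill all terms not containing $t$, then chooses a finite set $F\subset D_{I,t}$ carrying all but $\varepsilon p_t$ of the remaining probability, applies $a_{(u)}^{\dagger}$ for the finite string $u$ of all sites occurring in $F$ to suppress the unwanted terms up to an error controlled by $\sum_{r\in D_{I,t}\setminus F}p_r<\varepsilon p_t$, and finally applies $a_{(s)}^{\dagger}a_{(u)}$ to land within $\varepsilon$ of $f_{s\iota(t)}$. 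Your argument instead exploits the von Neumann closure of $\mathcal{A}(I)$ to obtain the exact spectral projection $P_u$ as an SOT limit of the commuting finite products $\prod_{l\in u}a_l^{\dagger}a_l\prod_{l\in F\setminus u}a_la_l^{\dagger}$, so that $P_u\zeta=p_u^{1/2}f_{u\iota(u)}$ holds exactly and each basis vector $f_{t\iota(u)}$ lies \emph{in} $\mathcal{A}(I)\zeta$ rather than merely in its closure; this is shorter and avoids the bookkeeping with $D_{I,t}$ and $F$. The trade-off is that the paper's approximation argument, using only localised elements, actually establishes the stronger fact that $\zeta$ is already cyclic for the norm-closed (indeed the algebraic) CAR algebra over $I$, whereas your argument uses the von Neumann algebra structure in an essential way --- which is all the proposition asks for. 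Both proofs use $p_s>0$ at the same point, and your reduction of general $r\in D_L$ to $f_{t\iota(u)}$ up to sign, as well as the sign-free computation $A_{tu}f_{u\iota(u)}=f_{t\iota(u)}$, check out.
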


\begin{proof}
As noted above, we reduce the matter to infinite countable $I$. Let us first introduce some notation. For a sequence $s=(s_{1},...,s_{n})\in D_{I}$, we write $a_{(s)}:=a_{s_{n}}a_{s_{n-1}}\cdots a_{s_{1}}$.
	If for $s,t\in D_{I}$ all entries of $s$ are also present in $t$ (even in a
	different order), we say that $t$ contains $s$, written as $s\subset t$, and
	in this case we define $t\backslash s\in D_{I}$ as the string consisting of
	those entries of $t$ which are not in $s$ (the order of which is determined
	by our choice of $D_{I}$).

	Now fix $s,t\in D_{I}$, and define
$$
\zeta_{2}:=a_{(t)}\zeta\,.
$$	
If $r\in D_{I}$ does not contain $t$, then by \eqref{c0} the terms $p_{r}^{1/2}f_{r\iota (r)}$ in $\zeta$
are annihilated by $a_{t}$. Therefore, $\zeta_{2}$ consists exactly of all the terms of the form
	\begin{equation*}
	\pm p_{r}^{1/2}f_{(r\backslash t)\iota (r)}\,,
	\end{equation*}
	for $r\in D_{I,t}:=\{q\in D_{I} \mid t\subset q\}$.
	In particular, $p_{t}^{1/2}f_{\iota (t)}$ appears as a term in $\zeta_{2}$. The probabilities not
	appearing in these terms add up to
	\begin{equation*}
	p:=\sum_{r\in D_{I}\backslash D_{I,t}}p_{r}\,.
	\end{equation*}
	So, for any given $\varepsilon >0$, there is a finite subset $F$ of $D_{I,t}$
	such that
	\begin{equation*}
	\sum_{r\in F}p_{r}>1-p-\varepsilon p_{t}\,,
	\end{equation*}
	or equivalently
	\begin{equation*}
	\sum_{r\in D_{I,t}\backslash F}p_{r}<\varepsilon p_{t}\,.
	\end{equation*}
	Let $u\in D_{I}$ be the sequence consisting of all the elements of $I$ which
	appear as an entry in at least one of the sequences in $F$, and define
	\begin{equation*}
	\zeta_3:=a_{(u)}^{\dagger}\zeta_2\,.
	\end{equation*}
	As $\zeta_2$ contains $p_{t}^{1/2}f_{\iota(t)}$, again \eqref{c0} gives that $p_{t}^{1/2}f_{u\iota(t)}$ is a term in $\zeta_3$. Since the terms $\pm p_{r}
	^{1/2}f_{(r\backslash t)\iota(r)}$ in $\zeta_{2}$ such that $r\backslash t$
	contains entries from $u$ are annihilated by $a_{(u)}^{\dagger}$, the
	probabilities $p_{r}$ appearing in the terms of $\zeta_{3}$ are therefore
	exactly those with $r$ containing all entries of $t$, but no further entries
	from any of the sequences in $F$. No such $r$ is in $F$, except possibly for $
	r=t$. As for $v,w\in D_{I}$ the vectors $f_{v\iota(w)}$ are orthonormal, it follows that
	\begin{equation}
\label{xx}
	\left\|\zeta_3-p_t^{1/2}f_{u\iota(t)}\right\|^2 \leq\sum_{r\in
		D_{I,t}\backslash F}p_{r}<\varepsilon p_t\,.
	\end{equation}
	Lastly, we set
	\begin{equation*}
	\zeta_{4}:=a_{(s)}^{\dagger}a_{(u)}\zeta_{3}\,,
	\end{equation*}
	where further terms may be annihilated, but $\zeta_{4}$ contains at least the term $p_{t}^{1/2}f_{s\iota (t)}$. Again the orthonormality gives	\begin{equation*}
	\left\| f_{s\iota (t)}-\frac{1}{p_{t}^{1/2}}\zeta_{4}\right\| ^{2}\leq \frac{1}{p_{t}}\left\| p_{t}^{1/2}f_{u\iota (t)}-\zeta_{3}\right\| ^{2}<\varepsilon\,,
	\end{equation*}
where the second inequality comes from \eqref{xx}. After recalling that vectors of the form $f_{s\iota (t)}$ constitute a basis for $\ch$, and taking into account that $\zeta_4\in \mathcal{A}(I)\zeta$, the thesis is achieved since $\varepsilon$ is arbitrary.
\end{proof}
Let now $\Gamma :\ch\rightarrow\ch$ be the self-adjoint unitary defined by
\begin{equation}
\Gamma f_{s}=\left\{\begin{array}{cc}
f_{s}&\text{ \ if }s\text{ has even length}\,, \\
-f_{s}&\text{ \ if }s\text{ has odd length}\,,
\end{array}
\right.  \label{tralGrad}
\end{equation}
for all $s\in D_{L}$. As usual, it induces a $\mathbb{Z}_{2}$-grading $\g:=\ad_{\G}$ on $\cb(\ch)$, and $\Gamma \mathcal{A}(I)\Gamma =\mathcal{A}(I)$, $\Gamma \zeta =\zeta$. By restriction, $(\mathcal{A}(I),\mu_{I})$ is then equipped with a $\bz_2$-grading leaving $\m_I$ invariant.
Again we take the Klein
transformation w.r.t. $\G$, \emph{i.e.} $\eta :\cb(\ch)\rightarrow \cb(\ch)$, and the corresponding twisted commutant
$\mathcal{A}(I)^{\wr }$. Here, we removed the subscript since no confusion can arise. It turns out that
(\ref{tralK}) is
a specific case of (\ref{Kdef}), and therefore we obtain the following version of (\ref{BJL-dua}).
\begin{proposition}
	\label{verwKomVsKomplement}Given the bijection (\ref{komplBij}) and the $\mathbb{Z}_{2}$-grading of $\mathcal{A}(I)$ obtained from (\ref{tralGrad}),
	we have
	\begin{equation*}
	\mathcal{A}(I)^{\wr }=\mathcal{A}(L\backslash I).
	\end{equation*}
\end{proposition}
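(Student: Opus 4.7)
The plan is to identify the statement as a direct consequence of the BJL twisted duality formula \eqref{BJL-dua}, once we match our Klein transformation machinery from Section \ref{kjw} with the unitary $K$ appearing in \cite{BJL}.

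First, I would compute the relevant subspaces in the self-dual formulation. Since $Z$ is generated by the vectors $(e_k,0)$ and $(0,e_l)$ for $k,l\in I$, the orthogonal complement $Z^{\perp}$ in $h^2$ is spanned by $(e_k,0)$ and $(0,e_l)$ for $k,l\in L\setminus I$. This subspace is clearly $C$-invariant. Hence $c(e_k,0)=a_k$ and $c(0,e_l)=a_l^{\dagger}$ for $k,l\in L\setminus I$ generate $\mathcal{M}(Z^\perp)$, and by the identification \eqref{MvsA} applied to $L\setminus I$ in place of $I$, we obtain $\mathcal{M}(Z^\perp)=\mathcal{A}(L\setminus I)$.

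Next I would reconcile the two $K$'s. The grading $\Gamma$ in \eqref{tralGrad} has eigenvalue $+1$ on $f_s$ for $s$ of even length, and $-1$ for $s$ of odd length, so the spectral projections $P_{\pm}=\tfrac{1}{2}(\idd_\ch\pm\Gamma)$ project onto the even- and odd-length basis vectors, respectively. Consequently, the unitary $K=P_{+}-\imath P_{-}$ from \eqref{Kdef} acts on the basis $(f_s)_{s\in D_L}$ exactly as in \eqref{tralK}. Thus the $K$ in the BJL formula \eqref{BJL-dua} coincides with the one implementing the twist automorphism $\eta=\eta_{\Gamma}$, i.e.\ $\eta(a)=KaK^{\ast}$ for $a\in\cb(\ch)$, as recorded in \eqref{kap&K}.

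Putting this together, Proposition \ref{vKomForm} yields $\mathcal{A}(I)^{\wr}=\eta(\mathcal{A}(I)')=K\mathcal{A}(I)'K^{\ast}$. Plugging in the BJL formula $\mathcal{A}(I)'=K\mathcal{A}(L\setminus I)K^{\ast}$ gives
\begin{equation*}
\mathcal{A}(I)^{\wr}=K^{2}\,\mathcal{A}(L\setminus I)\,(K^{\ast})^{2}=\Gamma\,\mathcal{A}(L\setminus I)\,\Gamma,
\end{equation*}
where we used $K^{2}=\Gamma$ and $\Gamma=\Gamma^{\ast}$. Finally, since $\Gamma a_l\Gamma=-a_l$ for every $l\in L\setminus I$, the algebra $\mathcal{A}(L\setminus I)$ is stable under $\ad_\Gamma$, so $\Gamma\,\mathcal{A}(L\setminus I)\,\Gamma=\mathcal{A}(L\setminus I)$, yielding the claim.

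The only nontrivial point is the matching of the two $K$'s, which is not obstacle but a verification; the existence of a clean closed-form description of $Z^\perp$ makes the identification $\mathcal{M}(Z^\perp)=\mathcal{A}(L\setminus I)$ immediate, so no approximation argument (unlike the one needed for Proposition \ref{FiIsSik}) is required here.
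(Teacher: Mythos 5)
Your proof is correct and follows essentially the same route as the paper: both apply Proposition \ref{vKomForm} together with \eqref{kap&K}, \eqref{BJL-dua} and \eqref{MvsA} to get $\mathcal{A}(I)^{\wr}=K\mathcal{M}(Z)'K^{*}=\Gamma\mathcal{A}(L\setminus I)\Gamma=\mathcal{A}(L\setminus I)$. You merely spell out details the paper leaves implicit (the computation of $Z^{\perp}$, the identification of the two $K$'s, and the $\ad_\Gamma$-stability of $\mathcal{A}(L\setminus I)$), all of which are verified correctly.
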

\begin{proof}
Indeed, by Proposition \ref{vKomForm}, the thesis follows directly from \eqref{kap&K}, \eqref{BJL-dua} and \eqref{MvsA}. Namely, recalling that $K^2=\G$, one has \begin{equation*}
	\mathcal{A}(I)^{\wr }=\eta (\mathcal{A}(I)^{\prime })=K\mathcal{M}
	(Z)^{\prime }K^{\ast }=\G \mathcal{A}(L\backslash I)\G=\mathcal{A}(L\backslash I)\,.
\end{equation*}
\end{proof}
The proposition above allows us to apply Theorem \ref{alfaGamma+} to the lattice.
\begin{corollary}
	\label{FiIsSkeid}
The fermionic entangled vector $\zeta$ is separating for $\mathcal{A}(I)$ in $\ch$.
\end{corollary}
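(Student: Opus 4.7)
The plan is to use Proposition \ref{gam''}(ii) together with Proposition \ref{verwKomVsKomplement} to convert the separating property of $\zeta$ for $\mathcal{A}(I)$ into a cyclicity statement for $\mathcal{A}(L\setminus I)$. Since each summand $f_{s\iota(s)}$ of $\zeta$ has the even length $2|s|$, the unitary $K$ (as well as $\G$) fixes $\zeta$; in particular $K\zeta=\zeta$. Hence, by Proposition \ref{gam''}(ii) and the identification $\mathcal{A}(I)^{\wr}=\mathcal{A}(L\setminus I)$, $\zeta$ is separating for $\mathcal{A}(I)$ if and only if $\zeta$ is cyclic for $\mathcal{A}(L\setminus I)$.

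To establish this cyclicity, I would exploit the symmetry between $I$ and $J:=L\setminus I$. Setting $\iota':=\iota^{-1}:J\to L\setminus J=I$ and $q_t:=p_{\iota^{-1}(t)}>0$, Proposition \ref{FiIsSik} applied to the pair $(J,\iota')$ yields that
\[
\zeta'\;:=\;\sum_{t\in D_J} q_t^{1/2}\,f_{t\,\iota^{-1}(t)}
\]
is cyclic for $\mathcal{A}(J)=\mathcal{A}(L\setminus I)$. Swapping the two blocks $\iota^{-1}(t)\in D_I$ and $t\in D_J$, each of length $|t|$, in $f_{\iota^{-1}(t)\,t}$ produces the sign $(-1)^{|t|^2}=(-1)^{|t|}$, whence
\[
\zeta\;=\;\sum_{t\in D_J}(-1)^{|t|}\,q_t^{1/2}\,f_{t\,\iota^{-1}(t)}.
\]
Introduce the self-adjoint unitary $\G_I\in\cb(\ch)$ defined on the basis by $\G_I f_r:=(-1)^{|r\cap I|}f_r$ for each $r\in D_L$. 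Then $\G_I\zeta'=\zeta$ directly from the identity above, since $|t\,\iota^{-1}(t)\cap I|=|t|$. Moreover, a direct verification on the generators $a_l$, $a_l^{\dagger}$ with $l\in J$ (whose action alters only the $J$-part of the index) shows that $\G_I$ commutes with each of them, so $\G_I\in\mathcal{A}(J)'$. Consequently $\mathcal{A}(J)\zeta=\G_I\,\mathcal{A}(J)\zeta'$ is dense in $\ch$, which is the desired cyclicity.

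The only subtle point is the sign bookkeeping produced by the block swap; the natural candidate $\G_I$ absorbs the resulting parity, and its membership in $\mathcal{A}(J)'$ is immediate from the disjointness $I\cap J=\emptyset$. Combined with the first paragraph, this completes the proof.
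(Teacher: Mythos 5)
Your proof is correct and follows essentially the same route as the paper: reduce the separating property to cyclicity of $\zeta$ for $\mathcal{A}(L\setminus I)=\mathcal{A}(I)^{\wr}$ using $K\zeta=\zeta$ (the paper does this via a direct computation with $\eta$ and \eqref{kap&K}, you via Proposition \ref{gam''}(ii) — equivalent), and then obtain that cyclicity from Proposition \ref{FiIsSik} with the roles of $I$ and $L\setminus I$ swapped. The one place you go beyond the paper is worthwhile: the paper simply asserts the role-swap, whereas the swapped entangled vector $\zeta'=\sum_t q_t^{1/2}f_{t\,\iota^{-1}(t)}$ differs from $\zeta$ by the signs $(-1)^{|t|}$ coming from the block transposition, and your diagonal unitary $\G_I\in\mathcal{A}(L\setminus I)'$ with $\G_I\zeta'=\zeta$ is a clean way to see that this discrepancy does not affect cyclicity.
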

\begin{proof}
By swapping the roles of $I$ and $L\backslash I$, Proposition \ref{FiIsSik} yields that $\zeta$ is cyclic for $\mathcal{A}(L\backslash I)=\mathcal{A}(I)^{\wr }=\eta (\mathcal{A}(I))^{\prime }$, where the last two equalities follow from Proposition \ref{verwKomVsKomplement} and Proposition \ref{vKomForm}, respectively. Therefore, $\zeta$ is separating for $\eta (\mathcal{A}(I))$. Consider now $a\in\mathcal{A}(I)$ such that $a\zeta=0$. As $K\zeta=\zeta$, from \eqref{kap&K} one has $0=K a\zeta=\eta(a)\zeta$. Then $\eta(a)=0$, and thus $a=0$.
\end{proof}
At last, by
combining the results of this section with those of Section \ref{AfdDualiteit}, we solve the problem described above.
\begin{theorem}
	\label{Oplossing}	
Let $B_{\varphi }(a,b):
	\mathcal{A}(I)\times \mathcal{A}(L\backslash I)\rightarrow \mathbb{C}$
	be the bilinear form given by
	\begin{equation*}
	B_{\varphi }(a,b):=\varphi (ab)=\left\langle ab\zeta ,\zeta \right\rangle\,, \quad a\in \mathcal{A}(I)\,,\,\,\, b\in \mathcal{A}(L\backslash I)\,.
	\end{equation*}
	Then, for any positive and even linear map $\Psi :\mathcal{A}(I)\rightarrow \mathcal{A}(I)$ such
	that $\varphi \circ \Psi (a)=\varphi (a)$ for all $a\in \mathcal{A}(I)$,
	there exists a unique (and necessarily linear) \emph{fermionic dual} map
	\begin{equation*}
	\Psi ^{\varphi }:
	\mathcal{A}(L\backslash I)\rightarrow \mathcal{A}(L\backslash I)
	\end{equation*}
such that
	\begin{equation*}
	B_{\varphi }(\Psi (a),b)=B_{\varphi }(a,\Psi ^{\varphi }(b))\,, \quad \quad a\in \mathcal{A}(I)\,,\,\,\, b\in \mathcal{A}(L\backslash I)\,.
	\end{equation*}
Furthermore, $\Psi ^{\varphi }$ is positive, unital, faithful and even.
In addition:
	
	(i) if $\Psi$ is $n$-positive for some $n\in\bn$ , then $\Psi
	^{\varphi}$ is $n$-positive;
	
	(ii) if $\Psi$ completely positive, then $\Psi^{\varphi}$ is completely positive;
	
	(iii) if $\Psi $ is unital, then $\varphi \circ \Psi ^{\varphi
	}(b)=\varphi (b)$ for all $b\in \mathcal{A}(L\backslash I)$. Moreover, $\Psi ^{\varphi \varphi }:=(\Psi ^{\varphi })^{\varphi }:\mathcal{A}(I)\rightarrow \mathcal{A}(I)$ is well-defined, \emph{i.e.} it exists and is
	uniquely determined by
	\begin{equation*}
	B_{\varphi }(\Psi ^{\varphi \varphi }(a),b)=B_{\varphi }(a,\Psi
	^{\varphi }(b))\,, \quad a\in \mathcal{A}(I)\,,\,\,\, b\in \mathcal{A}(L\backslash I)\,.
	\end{equation*}
	Finally, $\Psi^{\varphi \varphi}=\Psi$.
\end{theorem}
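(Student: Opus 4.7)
The plan is to identify $\Psi^{\varphi}$ with the twisted dual $\Psi^{\wr}$ from Section \ref{AfdDualiteit}, applied in the situation $\mathcal{M}=\mathcal{N}=\mathcal{A}(I)$ and $\xi_{\mu}=\xi_{\nu}=\zeta$, so that every assertion of the theorem follows from the combined contents of Proposition \ref{verwrBehoud} and Theorem \ref{alfaGamma+}.

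First I would check that the lattice set-up fits the hypotheses of those abstract results. Each term $f_{s\iota(s)}$ in \eqref{diag} has length $2|s|$, so $\Gamma\zeta=\zeta$; this shows that $\mu:=\varphi\lceil_{\mathcal{A}(I)}$ is even and that $K\zeta=\zeta$. By Proposition \ref{FiIsSik} and Corollary \ref{FiIsSkeid}, $\zeta$ is cyclic and separating for $\mathcal{A}(I)$, and then Proposition \ref{gam''} transfers both properties to $\mathcal{A}(I)^{\wr}$. By Proposition \ref{verwKomVsKomplement} this twisted commutant is exactly $\mathcal{A}(L\setminus I)$, so $\zeta$ is simultaneously cyclic and separating for both $\mathcal{A}(I)$ and $\mathcal{A}(L\setminus I)$. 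Together with the hypotheses that $\Psi$ is positive, even and satisfies $\mu\circ\Psi=\mu$, this places us squarely in the context of Theorem \ref{alfaGamma+}.

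I would then set $\Psi^{\varphi}:=\Psi^{\wr}:\mathcal{A}(L\setminus I)\to\mathcal{A}(L\setminus I)$. Theorem \ref{alfaGamma+} gives that $\Psi^{\wr}$ is positive, whence by the remark following that theorem the defining relation of the twisted dual admits the form \eqref{linkerduaal}, which in our situation reads
$$
\langle a\,\Psi^{\varphi}(b)\zeta,\zeta\rangle=\langle \Psi(a)\,b\,\zeta,\zeta\rangle, \qquad a\in\mathcal{A}(I),\; b\in\mathcal{A}(L\setminus I),
$$
i.e.\ precisely $B_{\varphi}(a,\Psi^{\varphi}(b))=B_{\varphi}(\Psi(a),b)$. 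Uniqueness is handled by the bicyclic/separating property of $\zeta$: if $\Phi$ satisfies the same duality, then $\langle a\bigl(\Phi(b)-\Psi^{\varphi}(b)\bigr)\zeta,\zeta\rangle=0$ for every $a\in\mathcal{A}(I)$, and cyclicity of $\zeta$ for $\mathcal{A}(I)$ combined with it being separating for $\mathcal{A}(L\setminus I)$ forces $\Phi(b)=\Psi^{\varphi}(b)$.

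The remaining items then follow by direct transcription. Positivity, evenness and unitality of $\Psi^{\varphi}$ are read off Theorem \ref{alfaGamma+} together with the universal unitality clause in Proposition \ref{verwrBehoud}; items (i) and (ii) are exactly (a) and (b) of Theorem \ref{alfaGamma+}. For (iii), if $\Psi$ is unital then Proposition \ref{verwrBehoud} yields both $\varphi\circ\Psi^{\varphi}=\varphi$ on $\mathcal{A}(L\setminus I)$ and the faithfulness of $\Psi^{\varphi}$, while Theorem \ref{alfaGamma+}(c) — applicable because $\zeta$ separates both $\mathcal{A}(I)$ and $\mathcal{A}(L\setminus I)$ — gives $\Psi^{\varphi\varphi}=\Psi$, the characterising symmetric duality being verified and the uniqueness proved by the very same cyclic/separating argument as above. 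I do not foresee a substantive obstacle: the only conceptual point requiring care is matching the bilinear form $B_{\varphi}$ inherited from \cite{Dfer} with the pairing arising from the twisted dual, and this matching is already effected by Proposition \ref{verwKomVsKomplement}.
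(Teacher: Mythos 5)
Your proposal is correct and follows essentially the same route as the paper: identify $\Psi^{\varphi}$ with the twisted dual $\Psi^{\wr}$ for $\mathcal{M}=\mathcal{N}=\mathcal{A}(I)$ with cyclic vector $\zeta$, invoke Proposition \ref{FiIsSik}, Proposition \ref{verwKomVsKomplement} and Corollary \ref{FiIsSkeid} to verify the hypotheses, and read off all conclusions from Proposition \ref{verwrBehoud} and Theorem \ref{alfaGamma+} (using the positivity of $\Psi^{\wr}$ to pass to the ordering \eqref{linkerduaal} matching $B_{\varphi}$). Your treatment of item (iii) and of the operator-ordering in the bilinear form is consistent with the paper's argument, which handles the swap of $I$ and $L\backslash I$ via the conjugate form $\overline{B_{\varphi}(a^{*},b^{*})}$.
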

\begin{proof}
	Indeed, applying Proposition \ref{verwrBehoud} and Theorem \ref{alfaGamma+} to $\mathcal{M}=\mathcal{N}=\mathcal{A}(I)$, with $\xi_\m$
	replaced by $\zeta$, one obtains $\Psi ^{\varphi }=\Psi
	^{\wr }$. Thus, exploiting Proposition \ref{FiIsSik}, Proposition \ref{verwKomVsKomplement}, and finally Corollary \ref{FiIsSkeid}, one achieves the thesis.

In particular, as $\Psi
	^{\varphi }$ is positive and even, and since $\varphi \circ \Psi ^{\varphi
	}(b)=\varphi (b)$ for all $b\in \mathcal{A}(L\backslash I)$, swapping the roles of $I$
and $L\backslash I$, $\Psi^{\f\f}$ is the dual map associated to the bilinear form
	\begin{equation*}
	\mathcal{A}(I)\times \mathcal{A}(L\backslash I)\ni (a,b)\mapsto
	\overline{B_{\varphi }(a^{\ast },b^{\ast })}=\left\langle ba\zeta,\zeta\right\rangle\,.
	\end{equation*}
	As a consequence, $\Psi ^{\varphi \varphi }$ is uniquely determined by
$$
\left\langle \Psi ^{\varphi
	}(b)a\zeta,\zeta \right\rangle =\left\langle b\Psi ^{\varphi \varphi
	}(a)\zeta,\zeta \right\rangle
$$
as a positive map, where $a\in \mathcal{A}(I)$ and $b\in \mathcal{A}(L\backslash I)$. Finally,
$$
B_{\varphi
	}(\Psi ^{\varphi \varphi }(a),b)=\overline{\left\langle b^{\ast }\Psi
		^{\varphi \varphi }(a^{\ast })\zeta,\zeta\right\rangle }=
	\overline{\left\langle \Psi ^{\varphi }(b^{\ast })a^{\ast }\zeta,\zeta\right\rangle}
	=B_{\varphi }(a,\Psi ^{\varphi }(b))\,.
$$
\end{proof}
As suggested by (iii) above, one can also look at the ``dual version'' of this theorem by
working in terms of $B_{\varphi }(\Phi ^{\varphi }(a),b)=B_{\varphi
}(a,\Phi (b))$, for $\Phi :\mathcal{A}(L\backslash I)\rightarrow \mathcal{A}
(L\backslash I)$ positive, even and state preserving.

\begin{remark}
	Using the same argument as in \eqref{linkerduaal}, we may as well define the
	bilinear form as $B_{\varphi }(a,b)=\left\langle ba\zeta,\zeta \right\rangle $.
	It leads exactly to the same dual $\Psi ^{\varphi }$ of $\Psi $,
	since $\Psi ^{\varphi }$ is positive in the theorem above.
\end{remark}

\section{Fermionic detailed balance}

\label{AfdFfb}

The theory developed in this paper provides
a natural framework to formulate detailed balance for fermionic systems.

The discussion is spread over two sections, the latter dealing with a more abstract setting. Here, using the Fermi tensor product,
our formulation of fermionic
detailed balance is based on the diagonal state of a compound system.

For the lattice in the finite dimensional case (\emph{i.e.} $I$ finite), fermionic detailed balance has been defined in
\cite{Dfer}. As it appears useful for our definition, in the next lines we briefly recall it,
using the notation of the previous section. Thus, consider a unital positive map
$\Psi :\mathcal{A}(I)\rightarrow \mathcal{A}(I)$, which has to be carried over to $\mathcal{A}(\iota(I))$ in order to have its copy on the latter algebra.
In more detail, if $\varkappa :\mathcal{A}(I)\rightarrow \mathcal{A}(\iota (I))$ is the $\ast$-isomorphism given by
\begin{equation}
\varkappa (a_{l}):=a_{\iota (l)}\,, \quad l\in I\,,   \label{tralKopie}
\end{equation}
one considers
\begin{equation}
\Psi ^{\iota }:=\varkappa \circ \Psi \circ \varkappa ^{-1}:\mathcal{A}(\iota
(I))\rightarrow \mathcal{A}(\iota (I)) \label{dinKop}
\end{equation}
as the copying map of $\Psi$.
Motivated by the seminal papers \cite{DHR1, DHR2}, and standard quantum detailed balance w.r.t. a reversing operation studied
in \cite{FR,DS1}, fermionic detailed balance of $\Psi$ has been defined in
\cite{Dfer} by means of the fermionic entangled pure state $\f$ on the compound system as
\begin{equation}
\label{ffb}
\f(a\Psi^{\iota}(b))=\f(\Psi(a)b)\,, \quad a\in \mathcal{A}(I)\,,\,\,\, b\in \mathcal{A}(\iota (I))\,.
\end{equation}
In the case where $\iota(I):=L\backslash I$, we see by Proposition \ref{verwKomVsKomplement} that \eqref{tralKopie}
and \eqref{dinKop} can respectively be expressed as
\begin{equation}
\varkappa :\mathcal{A}(I)\rightarrow \mathcal{A}(I)^\wr\,, \quad \text{and}\,\,\,\,\, \Psi ^{\iota }:=\varkappa \circ \Psi \circ \varkappa ^{-1}:
\mathcal{A}(I)^\wr\rightarrow \mathcal{A}(I)^\wr\,.
\label{kopIso2}
\end{equation}
Under the same notation of Theorem \ref{Oplossing}, \eqref{ffb} can be therefore formulated as
\begin{equation}
\Psi ^{\varphi }=\Psi ^{\iota }\,,\label{einddinFFB}
\end{equation}
or equivalently
\begin{equation*}
B_{\varphi }(a,\Psi ^{\iota }(b))=B_{\varphi }(\Psi (a),b)
\end{equation*}
for all $a\in \mathcal{A}(I)$ and $b\in \mathcal{A}(\iota (I))$.
We use this as the basis for the following generalization.

For our aim, it is useful to first give a definition of diagonal states directly in the von Neumann algebra setting. The definition in
Section \ref{AfdDiag} for a $W^*$-algebra $\gam$ reduces to the following one by
considering the von Neumann algebra $(\pi_\f(\gam)'',\ch_\f)$, together with the
grading on $\cb(\ch_\f)$ induced by $\G:=V_{\f,\th}$ and $\xi:=\xi_\f$.
\begin{prop}
Let $(\cam,\ch)$ be a von Neumann algebra, together with a selfadjoint unitary $\G$ acting on $\ch$ whose adjoint action leaves $\cam$ globally stable: $\cam=\G\cam\G$. Consider
the corresponding twisted commutant $\cam^{\wr}_\G$ given in Definition \ref{verwKom}.

For each unit vector $\xi\in\ch$,
$$
\mathcal{M}\, \circled{\rm{{\tiny F}}}^{\rm bin}_{\rm max}\mathcal{M}^\wr_\G\ni a\ftp b^{\wr}\mapsto\d_\xi
\big(a\ftp b^{\wr}\big):=\langle ab^{\wr}\xi,\xi\rangle\in\bc
$$
uniquely defines a state. If in addition $\G\xi=\pm\xi$, then $\d_\xi$ is even.
\end{prop}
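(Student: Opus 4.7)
The plan is to realise $\d_\xi$ as $\om_\xi\circ\pi$, where $\om_\xi:=\langle\,\cdot\,\xi,\xi\rangle$ is the vector state on $\cb(\ch)$ and $\pi$ is a suitable binormal $*$-representation of $\cam\ftp^{\rm bin}_{\rm max}\cam^\wr_\G$ on $\ch$. The representation $\pi$ would be obtained by invoking the universal property of the Fermi tensor product (Theorem \ref{tk}) applied to the inclusions $\pi_1:\cam\hookrightarrow\cb(\ch)$ and $\pi_2:\cam^\wr_\G\hookrightarrow\cb(\ch)$, both equipped with the $\bz_2$-gradings inherited from $\ad_\G$ (well defined on $\cam$ by hypothesis and on $\cam^\wr_\G$ by the discussion preceding Remark \ref{rem1}).

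The main technical step, and the main obstacle, is verifying the graded commutation relation
\[
\pi_1(a)\pi_2(b^\wr)=\eeps(a,b^\wr)\pi_2(b^\wr)\pi_1(a)
\]
for homogeneous $a\in\cam$ and $b^\wr\in\cam^\wr_\G$. By Proposition \ref{vKomForm} one has $\cam^\wr_\G=\eta_\G(\cam')$, so every element of $\cam^\wr_\G$ can be written as $c'_++\imath\G c'_-$ with $c'_\pm\in\cam'_\pm$, and hence a homogeneous odd element has the form $\imath\G c'_-$ with $c'_-\in\cam'_-$. I would then run the four-case parity check on $(a,b^\wr)$, using only $ac'=c'a$ for $c'\in\cam'$ together with $\G a=(-1)^{\partial(a)}a\G$ on homogeneous $a\in\cam$. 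The only nontrivial sign arises in the case where both factors are odd: one finds $a(\imath\G c'_-)=-(\imath\G c'_-)a$, matching $\eeps=-1$. This sign bookkeeping is where the computation needs care.

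Once this relation is established, Theorem \ref{tk} produces a unique $*$-homomorphism $\pi:\cam\ftp_{\rm max}\cam^\wr_\G\to\cb(\ch)$ with $\pi(a\ftp b^\wr)=ab^\wr$. Because $\pi_1,\pi_2$ are inclusions of von Neumann algebras, they are normal, so by definition of the binormal norm $\pi$ extends uniquely to a representation of $\cam\ftp^{\rm bin}_{\rm max}\cam^\wr_\G$. Setting $\d_\xi:=\om_\xi\circ\pi$ gives a state with the prescribed values on elementary tensors; uniqueness follows from density of the algebraic Fermi tensor product in the binormal completion.

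Finally, for the evenness claim, I would verify $\d_\xi\circ(\alpha\ftp\beta)=\d_\xi$ on elementary tensors, where $\alpha,\beta$ denote the gradings of $\cam,\cam^\wr_\G$. Using $\G^2=\idd$ and $\G^*=\G$, a direct computation gives
\[
\d_\xi\big((\alpha\ftp\beta)(a\ftp b^\wr)\big)=\langle(\G a\G)(\G b^\wr\G)\xi,\xi\rangle=\langle \G ab^\wr\G\xi,\xi\rangle=\langle ab^\wr\G\xi,\G\xi\rangle,
\]
and the hypothesis $\G\xi=\pm\xi$ collapses the right-hand side to $\langle ab^\wr\xi,\xi\rangle=\d_\xi(a\ftp b^\wr)$, which completes the argument.
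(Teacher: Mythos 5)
Your proof is correct and follows essentially the same route as the paper: both realise $\d_\xi$ as the vector state $\langle\,\cdot\,\xi,\xi\rangle$ composed with the multiplication representation $a\ftp b^{\wr}\mapsto ab^{\wr}$ of the binormal Fermi tensor product and prove evenness by the same two-line computation with $\G\xi=\pm\xi$; you merely make explicit, via Theorem \ref{tk} and the decomposition $\cam^{\wr}_\G=\eta_\G(\cam')$, the graded commutation check that the paper delegates to a citation of \cite{CrF2}. One cosmetic caveat: since this paper's convention is $\partial(a)\in\{1,-1\}$, the relation should be written $\G a=\partial(a)\,a\G$ rather than $\G a=(-1)^{\partial(a)}a\G$, but your four-case analysis uses the correct signs.
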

\begin{proof}
Put $\mathcal{M}^\wr=\mathcal{M}^\wr_\G$. By reasoning as in Proposition 4.1 of \cite{CrF2}, we argue that
$$
\cam\ftp \cam^{\wr}\ni a\ftp b^{\wr}\mapsto ab^{\wr}\in\cb(\ch)
$$
uniquely extends to a nondegenerate representation $\pi$ of $\mathcal{M}\, \circled{\rm{{\tiny F}}}^{\rm bin}_{\rm max}\mathcal{M}^\wr_\G$.
Therefore, $\d_\xi$ uniquely defines a state, by restricting the vector state generated by $\xi$ to the image of $\pi$.

Finally, let $\th$ and $\th^\wr$ be the grading implemented by $\G$ on $\mathcal{M}$ and $\mathcal{M}^\wr$ by restriction, respectively.
Denoting by $\th\, \circled{\rm{{\tiny F}}}\,\th^\wr$ the induced grading on $\mathcal{M}\, \circled{\rm{{\tiny F}}}^{\rm bin}_{\rm max}\mathcal{M}^\wr$ ({\it cf.} \eqref{aupf}),
we easily get for $a\, \circled{\rm{{\tiny F}}}\,b^\wr\in\mathcal{M}\, \circled{\rm{{\tiny F}}}^{\rm bin}_{\rm max}\mathcal{M}^\wr_\G$,
$$
\d_\xi\big((\th\, \circled{\rm{{\tiny F}}}\,\th^\wr)(a\, \circled{\rm{{\tiny F}}}\,b^{\wr})\big):=\langle \G ab^{\wr}\G\xi,\xi\rangle
=\langle ab^{\wr}\G\xi,\G\xi\rangle=\d_{\G\xi}(a\, \circled{\rm{{\tiny F}}}\,b^\wr)\,,
$$
and then $\d_\xi$ coincides with $\d_{\G\xi}$ if $\G\xi=\pm\xi$.
\end{proof}
We can easily deduce that the GNS representation $(\ch_{\d_\xi},\pi_{\d_\xi},\xi_{\d_\xi})$ is $(P\ch, P\pi,\xi)$, where
$P\in\left(\mathcal{M}\bigvee\mathcal{M}^\wr\right)'$ is the cyclic projection onto $\overline{\left(\mathcal{M}\bigvee\mathcal{M}^\wr\right)\xi}\subset\ch$.

Notice that the name ``diagonal state" for $\d_\xi$ is justified from the fact that it can be viewed
as the diagonal state associated to the product state $\langle\,{\bf\cdot}\,\xi,\xi\rangle\lceil_\mathcal{M}\times\langle\,
{\bf\cdot}\,\xi,\xi\rangle\lceil_{\mathcal{M}^\wr}$, provided at least one of the factors is even.

Under the assumptions and notations of the above proposition, we assume further that $\xi\in \ch$ is cyclic and separating for $\mathcal{M}$,
and consider the even state $\m:=\langle\,{\bf\cdot} \xi, \xi\rangle$.
In order to formulate fermionic standard quantum detailed balance in this setting,
we generalise $\varkappa$ in \eqref{tralKopie} by assuming that we have a $\ast $-isomorphism
\begin{equation}
\varkappa :\mathcal{M}\rightarrow \mathcal{M}^{\wr }\,, \label{dinKopie}
\end{equation}
which copies the dynamics. As usual, $\mathcal{M}^{\wr}$ stands for $\mathcal{M}^{\wr }_\G$. For any unital positive map
$\Psi :\mathcal{M}\rightarrow \mathcal{M}$, we obtain
\begin{equation}
\label{alka}
\Psi ^{\varkappa}:=\varkappa\circ \Psi \circ \varkappa^{-1}:
\mathcal{M}^{\wr}\rightarrow \mathcal{M}^{\wr }
\end{equation}
as in (\ref{kopIso2}). Note that if we assume that $\varkappa$ is grading-equivariant, then $\Psi ^{\varkappa}$ is automatically
even when $\Psi$ is, a condition naturally satisfied in the lattice.
Recalling that $\Psi^{\wr}:\mathcal{M}^{\wr } \rightarrow \mathcal{M}^{\wr }$ is the twisted dual map of $\Psi$, we now state the main concept of this section.
\begin{definition}
	\label{algFFB}
Under the above notations, we say that $\Psi$ satisfies \emph{fermionic standard quantum
	detailed balance} (with respect to $\mu$ and $\varkappa$), if
\begin{equation}
\Psi^\wr=\Psi^\varkappa.  \label{affb}
\end{equation}
\end{definition}
\vskip.3cm
Notice that the requirement (\ref{affb}) does not need the assumption that $\Psi$ is even.
The latter is indeed the case we are most interested in, since by Theorem \ref{alfaGamma+} it guarantees that
$\Psi^\wr$ is positive (resp. completely positive) if $\Psi$ is also positive (resp. completely positive). Nevertheless, if condition
(\ref{affb}) is satisfied, then the complete positivity of $\Psi$ by itself entails the same property for $\Psi^\wr$, as a consequence of \eqref{alka}.

The definition above generalises the finite dimensional lattice case (\ref{einddinFFB}), where $\Psi
^{\wr }$ is seen as the abstract version of $\Psi ^{\varphi}$. Furthermore,
after noticing that the unital algebra $\cam\ftp^{\rm bin}_{\rm max} \cam^{\wr}$
replaces the algebra $\ca(L)$, by
viewing $\cam$ and $\cam^{\wr}$ as playing the role of $\ca(I)$  and $\ca(I)^{\wr}=\ca(L\backslash I)$ (see Proposition
\ref{verwKomVsKomplement}) respectively, the diagonal state above defined allows to get a more general version of \eqref{ffb}.
\begin{prop}
\label{ds}
Fermionic standard quantum detailed balance as stated in Definition \ref{algFFB} can equivalently be
formulated as
\begin{equation}
\delta_{\xi}(a\ftp\Psi^\varkappa(b^{\wr}))=\delta_{\xi}(\Psi(a)\ftp b^{\wr})\,, \quad a\in\cam\,,\,\, b^{\wr}\in\cam^\wr\,.  \label{diagffb}
\end{equation}
\end{prop}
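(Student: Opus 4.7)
The plan is to verify the equivalence by unfolding the diagonal state and recognising \eqref{diagffb} as a reordered version of the characterizing property \eqref{verwrDuaal} of the twisted dual $\Psi^\wr$.

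First, I would expand $\delta_\xi(a\ftp c^\wr)=\langle ac^\wr\xi,\xi\rangle$, so that \eqref{diagffb} becomes the scalar identity $\langle a\Psi^\varkappa(b^\wr)\xi,\xi\rangle=\langle\Psi(a)b^\wr\xi,\xi\rangle$ for all $a\in\cam$ and $b^\wr\in\cam^\wr$. The defining property \eqref{verwrDuaal} of $\Psi^\wr$, on the other hand, reads $\langle\Psi^\wr(b^\wr)a\xi,\xi\rangle=\langle b^\wr\Psi(a)\xi,\xi\rangle$. Hence the substantive task is to bridge the two orderings: $a$ to the left of $\Psi^\varkappa(b^\wr)$ versus $\Psi^\wr(b^\wr)$ to the left of $a$.

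The bridge is provided by taking adjoints. Since $\Psi$ is positive, it is $*$-preserving; so is $\Psi^\varkappa=\varkappa\circ\Psi\circ\varkappa^{-1}$, because $\varkappa$ is a $*$-isomorphism. Taking complex conjugates of the unfolded \eqref{diagffb} via $\overline{\langle T\xi,\xi\rangle}=\langle T^{\ast}\xi,\xi\rangle$, applying these $*$-preserving properties, and renaming $a\leftrightarrow a^{\ast}$, $b^\wr\leftrightarrow (b^\wr)^{\ast}$ (both being arbitrary in the respective algebras) converts the identity into precisely the defining relation of the twisted dual with $\Psi^\wr$ replaced by $\Psi^\varkappa$.

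Finally, I would invoke the uniqueness of the twisted dual from Proposition \ref{verwrBehoud}: because $\xi$ is cyclic for $\cam$ and separating for $\cam^\wr$ (the latter by Remark \ref{rem1}(ii)), the relation \eqref{verwrDuaal} determines $\Psi^\wr$ uniquely, so $\Psi^\varkappa=\Psi^\wr$, which is \eqref{affb}. The converse direction is obtained by running the same adjoint chain backwards, starting from \eqref{verwrDuaal} applied to $\Psi^\wr=\Psi^\varkappa$. The main (and essentially only nontrivial) subtlety is the opposite placement of the factors in the diagonal-state formula versus \eqref{verwrDuaal}; the $\ast$-preserving property of positive maps between $C^{\ast}$-algebras is exactly the ingredient that makes the adjoint trick reconcile the two orderings.
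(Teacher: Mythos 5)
Your proposal is correct and follows essentially the same route as the paper: the paper's (very terse) proof likewise unfolds $\delta_\xi$ and appeals to the uniqueness statement of Proposition \ref{verwrBehoud} together with the adjoint computation surrounding Theorem \ref{alfaGamma+} (the passage from \eqref{verwrDuaal} to \eqref{linkerduaal}) to reconcile the two orderings. Your writeup merely makes explicit the $*$-preservation/complex-conjugation step that the paper delegates to its earlier derivation of \eqref{linkerduaal}.
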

\begin{proof}
Indeed, \eqref{affb} implies \eqref{diagffb} by definition of $\d_{\xi}$.
Moreover, Proposition \ref{verwrBehoud} and Theorem \ref{alfaGamma+} immediately give \eqref{affb} if \eqref{diagffb} holds true.
\end{proof}
Arguing as in Theorem \ref{Oplossing} and its
proof, it is natural to introduce the bilinear form
\begin{equation*}
B:\mathcal{M}\times \mathcal{M}^{\wr}\ni(a,b^{\wr})\mapsto B(a,b^{\wr}):=
\left\langle ab^{\wr}\xi ,\xi\right\rangle \in \bc\,.
\end{equation*}
Thus, condition (\ref{affb}) can be expressed as
\begin{equation}
B(a,\Psi ^{\varkappa }(b^{\wr}))=B(\Psi (a),b^{\wr})\,, \quad  a\in \mathcal{M}\,,\,\,\, b^{\wr}\in \mathcal{M}^{\wr }\,. \label{absBilin}
\end{equation}
Also note that the fermionic diagonal state gives exactly the bilinear form $B$ used in
(\ref{absBilin}), \emph{i.e.}
\[
\d_{\xi}(a\ftp b^{\wr})=B(a,b^{\wr})\,, \quad  a\in \mathcal{M}\,,\,\,\, b^{\wr}\in \mathcal{M}^{\wr }\,.
\]

\begin{rem}
Observe that $\delta_{\xi}$ plays the same role as $\varphi$ for the case of the lattice in
Theorem \ref{Oplossing} and identity (\ref{ffb}), and
$\xi$ can be viewed as an abstract version of the entangled vector $\zeta$, in analogy to the case of $\cam\odot \cam'$.
In the latter setting the diagonal state is a generalization of the entangled physical pure
state of a compound system consisting of two copies of
$\cb(\ch)$, with $\ch$ the Hilbert space of the system (see, \emph{e.g.} \cite{DS2}, Section 7).
One could wonder up to what generality the vector $\xi$
is physically an entangled state of the compound system $\mathcal{M}\, \circled{\rm{{\tiny F}}}^{\rm bin}_{\rm max}\mathcal{M}^\wr$. As this is not part of the aims of the present paper, we postpone a more exhaustive treatment to somewhere else.
\end{rem}
When $\theta=\id_{\mathcal{M}}$, \eqref{affb} reduces to the standard quantum detailed balance with respect to a reversing operation $\vartheta$ \cite{FR,DS1}.

In more detail, in this case one takes an involutive $*$
-antiautomorphism $\vartheta :\mathcal{M}\rightarrow \mathcal{M}$ (where ``anti"
simply means $\vartheta (a_{1}a_{2})=\vartheta (a_{2})\vartheta (a_{1})$ for $a_1,a_2\in\mathcal{M}$) such that $\mu \circ \vartheta =\mu$,
and considers $j(a):=J_{\m}a^{\ast }J_{\m}$
for all $a\in \cb(\ch)$, where $J_{\m}$ is the modular conjugation
associated with $\mu$. The standard quantum detailed balance w.r.t. the reversing map $\vartheta$ (also called $\vartheta$-sqdb), is defined by
\begin{equation}
\Psi ^{\vartheta }=\Psi\,,  \label{thetfb}
\end{equation}
where
\begin{equation*}
\Psi ^{\vartheta }:=\vartheta \circ j\circ \Psi ^{\prime }\circ j\circ \vartheta\,.
\end{equation*}
It is easy to see that \eqref{thetfb} is equivalent to
\begin{equation}
\label{tdb}
\Psi ^{\prime }=\Psi ^{\varrho}\,,
\end{equation}
where we have used $\varrho :=j\circ \vartheta :\mathcal{M}\rightarrow \mathcal{M}^{\prime}$ to copy $\Psi$ to $\Psi ^{\varrho}:=\varrho\circ \Psi \circ \varrho^{-1}:
\mathcal{M}^{\prime}\rightarrow \mathcal{M}^{\prime }$. Since now the twisted $\ast$-automorphism is trivial, \eqref{affb} and \eqref{tdb} are analogous.
\begin{example}
In the algebra $\bm_n(\bc)$ one takes $\vartheta $ as the transposition operation
with respect to the basis in which the density matrix is diagonal. Here, $\mathcal{M}:=\bm_n(\bc)\otimes \idd_{\bm_n(\bc)}$, and $\mathcal{M}^{\prime
}=\idd_{\bm_n(\bc)}\otimes \bm_n(\bc)$. Since for any $a\in \bm_n(\bc)$, $j(a\otimes \idd_{\bm_n(\bc)})=\idd_{\bm_n(\bc)}\otimes a^{T}$, with
$a^{T}$ the transpose of $a$ (see for example \cite{DS2}, Section 7), one finds
$$
\varrho(a\otimes \idd_{\bm_n(\bc)})=\idd_{\bm_n(\bc)}\otimes a\,.
$$
After replacing $\idd_{\bm_n(\bc)}\otimes \bm_n(\bc)$ with $\bm_n(\bc)\otimes \idd_{\bm_n(\bc)}$,
one notices $\Psi ^{\varrho}$ is the same physical dynamics as $\Psi $ on $\bm_n(\bc)$.
Therefore, in this instance, copying using $\varrho$ appears physically sensible.
\end{example}
\vskip.3cm

In general we need a copying $\ast $-isomorphism from $\cam$ to the
appropriate ``dual'' von Neumann algebra, which is physically sensible in
concrete examples, and mathematically natural in the abstract framework. In
the case of $\theta $-sqdb, the dual von Neumann algebra is
$\mathcal{M}^{\prime }$, but in the fermionic case it is $\mathcal{M}^{\wr }$.

For a non trivial
$\bz_2$-graded structure of $\mathcal{M}$,
one would attempt to use $\widetilde{\varkappa}:=\eta\circ j\circ \vartheta$ as the copying map.
Then, for even $\Psi$, we could obtain the usual $\vartheta$-sqdb condition \eqref{thetfb} from (\ref{affb}) by Theorem \ref{alfaGamma+}. Namely,
\begin{align*}
\Psi & =\widetilde{\varkappa} ^{-1}\circ \Psi ^{\widetilde{\varkappa}}\circ \widetilde{\varkappa}
=\widetilde{\varkappa} ^{-1}\circ \Psi^{\wr }\circ \widetilde{\varkappa} \\
&=\vartheta \circ j\circ \eta ^{-1}\circ \Psi ^{\wr }\circ
\eta \circ j\circ \vartheta \\
& =\vartheta \circ j\circ \Psi ^{\prime }\circ j\circ \vartheta =\Psi ^{\vartheta}\,.
\end{align*}
Hence, although mathematically natural, this choice of copying the dynamics
is not physically appropriate for a structure equipped with a non trivial
$\bz_2$-grading, as it just reproduces the usual $\vartheta$-sqdb condition.
This is confirmed by the fact that the physically sensible copying $*$-isomorphism
in a lattice given by \eqref{tralKopie}, differs from $\widetilde{\varkappa}$,
as the next example
shows:
\begin{example}
Consider the lattice in the finite dimensional case (\emph{i.e.} $|I|< \infty$).
Let $\vartheta:\mathcal{A}(I)\rightarrow \mathcal{A}(I)$ be the transposition as mentioned above,
that is $\vartheta$ is the $*$-antiautomorphism such that for any $s,t\in D_I$
$$
\langle \vartheta(a)f_s,f_t\rangle:=\langle af_t,f_s\rangle\,, \quad a\in \mathcal{A}(I)\,.
$$
This means that $\vartheta(a_l)=a^\dag_l$, for $l\in I$. Simply taking
$I:=\{1,2\}$, $\iota (1)=3$ and $\iota (2)=4$, when $\mu $ is the normalised
trace, one has
$$
\zeta=\frac{1}{2}(f_{\emptyset}+f_{(1,3)}+f_{(2,4)}+f_{(1,2,3,4)})\,.
$$
Moreover, $j(a_1)\in \pi_{\m}(\mathcal{A}(I))^{\prime}$ and
$$
j(a_1)\zeta=Ja_1^\dag J\zeta= a_1\zeta\,.
$$
Notice that
\begin{align*}
a_1\zeta&= \frac{1}{2}(f_{(3)}+f_{(2,3,4)})\\
&=\frac{1}{2}a^\dag_3(f_{\emptyset}+f_{(1,3)}-f_{(2,4)}-f_{(1,2,3,4)}) \\
&=a^\dag_3(a_4a^\dag_4-a^\dag_4a_4)\Gamma\zeta\,.
\end{align*}
Now $a^\dag_3(a_4a^\dag_4-a^\dag_4a_4)\Gamma\in \pi_\mu(\mathcal{A}(I))^{\prime}$,
since $\Gamma^2=\idd_{\ch_{I\cup \iota(I)}}$, and $\pi_\mu(\mathcal{A}(I))^{\prime}$ is
generated by $\{a_3\Gamma, a_4\Gamma\}$. As $\zeta$ is separating, one finds
$$
j(a_1)=a^\dag_3(\idd_{\ch_{I\cup \iota(I)}}-2a^\dag_4a_4)\Gamma\,.
$$
As a consequence,
$$
j\circ \vartheta(a_1)=j(a_1^\dag)=\Gamma(\idd_{\ch_{I\cup \iota(I)}}-2a^\dag_4a_4)a_3
$$
and thus
$$
\varkappa (a_{1})=a_{3}\neq \imath (\idd_{\ch_{I\cup \iota(I)}}-2a_{4}^{\dag}a_{4})a_{3}
=\widetilde{\varkappa} (a_{1})\,.
$$
Here, \eqref{tralKopie} is the physically sensible way of copying dynamics, as it comes directly from the map $\iota$ copying the lattice sites.
\end{example}
\vskip.3cm

Therefore, when
dealing with fermionic detailed balance, we assume
the presence of an abstract copying $\ast$-isomorphism $\varkappa: \mathcal{M}\rightarrow \mathcal{M}^{\wr}$.

\section{Fermionic detailed balance for abstract $C^*$-systems}
\label{AfdCffb}

Our goal here is to give a definition of fermionic detailed
balance for abstract $C^*$-algebraic systems. It is reached by again using the notion of diagonal state studied in Section \ref{AfdDiag}.

Let $(\ga,\theta)$ be a unital $\bz_2$-graded $C^*$-algebra, and consider an even state $\varphi\in \cs(\ga)$ such that
$s_\varphi\in Z(\mathfrak{A}^{\ast\ast})$. If $\big(\ch_{\f},\pi_{\varphi},V_{\f,\th},\xi_{\f}\big)$
is the GNS covariant representation associated to $\f$, consider the von Neumann algebra $\cam:=\pi_{\varphi}(\mathfrak{A})^{\prime\prime}$
with normal faithful state $\mu$ given by $\mu(a):=\left\langle a\xi_{\varphi},\xi_{\varphi}\right\rangle$,
$a\in \cam$. Let $\gamma:=\ad_{V_{\f,\th}}\lceil_\cam$ be the $\mathbb{Z}_{2}$-grading of $\cam$ endowed with $\theta$, and take the corresponding Klein transformation
\[
\eta:\cb(\ch_{\varphi})\rightarrow \cb(\ch_{\varphi})
\]
as defined in (\ref{eta}). Recall that $\mu\circ\gamma=\mu$.

As in the previous section,
we consider a unital positive map $\Psi:\cam\rightarrow \cam$
as a (dissipative) dynamics on $\cam$, and assume that we have a
copying $*$-isomorphism $\varkappa:\cam\rightarrow \cam^{\wr}$
as in (\ref{dinKopie}) such that fermionic standard quantum detailed balance
(\ref{diagffb}) is satisfied, \emph{i.e.}
\[
\left\langle a\alpha^{\varkappa}(b^{\wr})\xi_{\varphi},\xi_{\varphi}\right\rangle
=\left\langle \alpha(a)b^{\wr}\xi_{\varphi},\xi_{\varphi}\right\rangle\,, \quad a\in \cam\,,\,\, b^{\wr}\in \cam^{\wr}\,.
\]
If $\cam^{\circ}$ is the opposite
algebra of $\cam$, then as in the previous section we take $j(a):=J_{\varphi}a^*J_{\varphi}$ for any $a\in \cb(\ch_{\varphi})$, where $J_\varphi$ is as
usual the modular conjugation associated to $\varphi$. Notice that $j$ realises a $*$-isomorphism between the $W^*$-algebras $\cam^{\circ}$ and $\cam'$.

Since from Proposition \ref{vKomForm}
it follows that $\eta^{-1}\circ\varkappa(\cam)=\cam'$, we consider the $\ast$-isomorphism
\begin{equation*}
\varsigma:=j\circ\eta^{-1}\circ\varkappa:\cam\rightarrow \cam^{\circ}\,.
\end{equation*}
Letting $\Psi^{\varsigma}:=\cam^{\circ}\rightarrow \cam^{\circ}$ be given by
\[
\Psi^{\varsigma}:=\varsigma\circ\Psi\circ\varsigma^{-1}\,,
\]
the detailed balance above can be rewritten as
\begin{equation}
\left\langle a(\eta\circ j\circ\Psi^{\varsigma})(b^{\circ})\xi_{\varphi},
\xi_{\varphi}\right\rangle
=\left\langle \Psi(a)(\eta\circ j)(b^{\circ})\xi_{\varphi},
\xi_{\varphi}\right\rangle    \label{ffbTeenAlg}
\end{equation}
for all $a\in \cam$ and $b^{\circ}\in \cam^{\circ}$.

Thus, we model the $C^*$-algebraic formulation of fermionic detailed balance on
(\ref{ffbTeenAlg}). Indeed, we take a unital positive map
\[
\F:\mathfrak{A}\rightarrow\mathfrak{A}
\]
as the dynamics, and assume we are given the $*$-isomorphism
$$
\r:\mathfrak{A}\rightarrow\mathfrak{A}^{\circ}\,,
$$
which serves as a necessarily abstract copying map.
Then, we present the following
\begin{definition}
\label{dbfz}
We say that $\F$ satisfies \emph{fermionic standard quantum detailed balance} (with respect to $\varphi$ and $\r$) if
\begin{equation}
\delta_{\varphi}(a\ftp \F^{\r}(b^{\circ}))=
\delta_{\varphi}(\F(a)\ftp b^{\circ}) \label{ffbC*-alg}
\end{equation}
for all $a\in\mathfrak{A}$ and $b\in\mathfrak{A}^{\circ}$, where
$\delta_{\varphi}$ is the diagonal state on $\mathfrak{A\ftp_{\max} A}^{\circ}$ given in
Theorem \ref{gprdig},
and
\[
\F^{\r}:=\r\circ\F\circ\r^{-1}:
\mathfrak{A}^{\circ}\rightarrow\mathfrak{A}^{\circ}\,.
\]
\end{definition}
\vskip.3cm
Notice that \eqref{ffbC*-alg} is a version of (\ref{diagffb}) expressed in terms of the diagonal state \eqref{fidiag}.
The main structure involved here is the bilinear form
\[
B_{\varphi}:\mathfrak{A}\times\mathfrak{A}^{\circ}\rightarrow\mathbb{C}
\]
given by
\[
B_{\varphi}(a,b^{\circ}):=\delta_{\varphi}(a\circled{\rm{{\tiny F}}} \,b^{\circ})\,, \quad a\in\ga\,,\,\, b^{\circ}\in \ga^{\circ}\,,
\]
and (\ref{ffbC*-alg}) can be given in terms of $B_{\varphi}$, in analogy to (\ref{absBilin}) in a natural way.\footnote{Even if Definition \ref{dbfz} is meaningful for general positive maps $\F$ and the $*$-isomorphisms $\r$, for physical applications these will be automatically even ({\it }grading-equivariant).}

We end by noticing that the above definition of detailed balance on one hand provides a unifying definition of {\it quantum detailed balance} in a very general situation. For example, it covers the usual tensor product case for which the grading is trivial.
On the other hand, Definition \ref{dbfz} clarifies the natural appearance of the diagonal state under the additional condition of centrality of the support of the involved state.\footnote{Concerning the classical ({\it i.e.} commutative) case, the definition of the ``diagonal measure" associated to the product measure is always meaningful because of the simple fact that the support of every state in the bidual is automatically central. This is no longer true in the quantum situation.}

For the previous work on the detailed balance, the reader is referred to \cite{Ag, Al, Dfer, DS1, DS2, FR, M} and the references cited therein.
The reader is also referred to \cite{BCM, BCM2, D, D2, D3, Fid, F27a} for the role of such a ``diagonal quantum measure" in quantum ergodic theory and the theory of joinings.

Finally, we would like to mention the natural connections between detailed balance and the notion of KMS-symmetric semigroups, see {\it e.g.} \cite{AC, BQ2, GL}.

\section*{Acknowledgments}
The authors acknowledge the following institutions:
\begin{itemize}
\item (V.C.) the grant ``Probabilit\`{a} Quantistica e Applicazioni", University of Bari;
\item (V.C. \& F. F.) Italian INDAM-GNAMPA;
\item (R.D.) the National Research Foundation of South Africa,
and the DST-NRF Centre of Excellence in Mathematical and Statistical Sciences;
\item (F.F) project ``Sustainability-OAAMP", University of Rome Tor Vergata, CUPE81I18000070005;
\item (F.F) Italian MIUR Excellence Department Project awarded to the Department of Mathematics, University of Rome Tor Vergata, CUP E83C18000100006.
\end{itemize}

\end{document}